\newenvironment{customthm}[1]
  {\innercustomthm}
  {\endinnercustomthm}
\newtheorem{theorem}{Theorem}[section]
\newtheorem{lemma}[theorem]{Lemma}
\newtheorem{proposition}[theorem]{Proposition}
\newtheorem{corollary}[theorem]{Corollary}
\newtheorem{conjecture}[theorem]{Conjecture}
\theoremstyle{definition}
\newtheorem{defn}[theorem]{Definition}
\theoremstyle{remark}
\newtheorem*{remark}{Remark}
\numberwithin{equation}{section}
\def\tr{\operatorname{tr}}
\newcommand{\HP}{\mathit{HP}}
\newcommand{\op}{\operatorname}
\newcommand{\SLC}{\operatorname{SL}(2, \mathbb{C})}
\title{$\SLC$ Floer cohomology for surgeries on some knots}
\begin{document}

\author[Ikshu Neithalath]{Ikshu Neithalath}
\thanks {The author was supported by NSF grants DMS-1708320 and DMS-2003488.}
\address {Centre for Quantum Mathematics, Syddansk Universitet, Campusvej 55, Odense M, Denmark 5230}
\email {ikshu@imada.sdu.dk}

\begin{abstract} We establish a relationship between $\HP(Y)$, the Abouzaid-Manolescu sheaf-theoretic $\SLC$ Floer cohomology, for $Y$ a surgery on a small knot in $S^3$ and Curtis' $\SLC$ Casson invariant. We use this to compute $\HP$ for most surgeries on two-bridge knots. We also compute $\HP$ for surgeries on two non-small knots, the granny and square knots. We provide a partial calculation of the framed sheaf-theoretic $\SLC$ Floer cohomology, $\HP_{\#}(Y)$, for surgeries on two-bridge knots and apply the data we obtain to show the non-existence of a surgery exact triangle for $\HP_{\#}$.
\end{abstract}

\maketitle

\section{Introduction}

In \cite{AM}, the authors defined a new invariant of closed, connected, orientable 3-manifolds $Y$ called sheaf-theoretic $\SLC$ Floer cohomology, denoted $\HP(Y)$. It is defined as the hypercohomology of a certain perverse sheaf on the character scheme $\mathscr{X}_{\text{irr}}(Y)$. The perverse sheaf comes from a description of this space as a complex Lagrangian intersection. In this paper, we compute this invariant for surgeries on two-bridge knots, the granny knot, and the square knot.

Let $K$ be a knot in $S^3$ and $S^3_{p/q}(K)$ its $p/q$ Dehn surgery. When $S^3\backslash K$ contains no closed, essential surfaces, we say that $K$ is a small knot. The calculation of $\HP(S^3_{p/q}(K))$ for $K$ a small knot and for most values of $p/q$ reduces to the $\operatorname{SL}(2,\mathbb{C})$ Casson invariant $\lambda_{SL(2,\mathbb{C})}$ as defined by Curtis \cite{curtis} and explored in her joint work with Boden \cite{BC}. Specifically, we have
\begin{customthm}{1}\label{smallknot}
Let $K\subset S^3$ be a small knot, and let $Y=S^3_{p/q}(K)$ denote $p/q$ surgery on $K$. If $p/q$ is not one of the finitely many boundary slopes of $K$, then $HP(Y)\cong \mathbb{Z}_{(0)}^{\lambda_{\SLC}(Y)}$.
\end{customthm}
\begin{remark}
We will often use the notation $A_{(k)}$ to denote a graded abelian group with $A$ in degree $k$. A more common notation for this is $A[-k]$.
\end{remark}
We combine this theorem with calculations of $\lambda_{\SLC}(Y)$ in the literature to produce explicit formulae for $\HP(Y)$ when $Y$ is a surgery on a two-bridge knot.

In \cite{AM}, the authors also define a framed version of sheaf-theoretic Floer cohomology denoted $\HP_{\#}(Y)$. It is defined as the hypercohomology of a certain perverse sheaf on the representation scheme of $Y$, $\operatorname{Hom}(\pi_1(Y),\SLC)$. We would like to compute the framed sheaf-theoretic Floer cohomology, $\HP_{\#}(S^3_{p/q}(K))$, for surgeries on knots. However, the representation schemes are usually not zero-dimensional and are often singular. So, we only give a formula for $\HP_{\#}$ of surgeries for which the character scheme is zero-dimensional, smooth, and does not contain non-abelian reducible representations.
\begin{customthm}{2}\label{repcalc}
Let $K$ be a knot and let $Y=S^3_{p/q}(K)$ denote the 3-manifold obtained from $p/q$ Dehn surgery on $K$. Let $p'=p$ for $p$ odd and $p'=\frac{p}{2}$ for $p$ even. Assume that the character scheme $\mathscr{X}_{\text{irr}}(Y)$ is zero-dimensional and smooth and no $p'$-th root of unity is a root of the Alexander polynomial of $K$. Then,
\begin{equation*}
\HP^*_{\#}(Y)= H^*(\text{pt})^{\oplus 2-\sigma(p)} \oplus H^{*+2}(\mathbb{CP}^1)^{\oplus \frac{1}{2}(|p|-2+\sigma(p))} \oplus H^{*+3} (\op{PSL}(2,\mathbb{C}))^{\oplus \lambda_{\SLC}(Y)},
\end{equation*}
where $\sigma(p)\in\{0,1\}$ is the parity of $p$.
\end{customthm}
We use this theorem in conjunction with the calculation of $\lambda_{\SLC}(Y)$ for surgeries on two-bridge knots in \cite{2bridge} to show that there do not exist exact triangles relating $\HP_{\#}$ for surgeries on two-bridge knots.

In light of Theorem \ref{smallknot}, we are interested in computing $\HP(S^3_{p/q}(K))$ when $K$ is not a small knot. The character schemes of such manifolds may have positive dimensional components, in which case the calculation of the $\SLC$ Casson invariant is insufficient to determine $\HP$. In fact, when $K=K_1\# K_2$ is a composite knot, we are guaranteed to have positive dimensional components. We provide a calculation of $\HP$ with $\mathbb{F}=\mathbb{Z}/2\mathbb{Z}$ coefficients for surgeries on the square and granny knots. Recall that the granny knot is the connected sum of two right-handed trefoils, whereas the square knot is a composite of a trefoil with its mirror.
\begin{customthm}{3}\label{HPgranny}
Let $S^3_{p/q}(3_1\# 3_1)$ denote the 3-manifold obtained from $p/q$ Dehn surgery on the granny knot, $3_1\# 3_1$. Then we have the following formula for the sheaf-theoretic Floer cohomology:
\begin{equation*}
    \HP(S^3_{p/q}(3_1\# 3_1);\mathbb{F})=
    \begin{cases}
	\mathbb{F}_{(0)}^{|6q-p|+\frac{1}{2}|12q-p|-\frac{3}{2}}\oplus \mathbb{F}_{(-1)}^{\frac{1}{2}|12q-p|-\frac{1}{2}} &\text{if $p$ is odd,}\ \\[10 pt]
	\mathbb{F}_{(0)}^{|6q-p|+\frac{1}{2}|12q-p|-1}\oplus \mathbb{F}_{(-1)}^{\frac{1}{2}|12q-p|-1}  &\text{if $p$ is even, $p\neq 12k$,}\ \\[10 pt]
	\mathbb{F}_{(0)}^{|6q-p|+\frac{1}{2}|12q-p|-5}\oplus \mathbb{F}_{(-1)}^{\frac{1}{2}|12q-p|+1}&\text{if $p=12k, p/q\neq 12$,}\ \\[10 pt]
	\mathbb{F}_{(1)}^4 \oplus\mathbb{F}_{(0)}^4 \oplus \mathbb{F}_{(-2)} &\text{if $p/q=12$.}\ \\
    \end{cases}
\end{equation*}
\end{customthm}
\begin{customthm}{4}\label{HPsquare}
Let $S^3_{p/q}(3_1\# 3_1^*)$ denote the 3-manifold obtained from $p/q$ Dehn surgery on the square knot, $3_1\# 3_1^*$ (where $3_1^*$ is the left-handed trefoil). Then we have the following formula for the sheaf-theoretic Floer cohomology:
\begin{equation*}
    \HP(S^3_{p/q}(3_1\# 3_1^*);\mathbb{F})=   
\begin{cases}
	\mathbb{F}_{(0)}^{\frac{1}{2}|6q-p|+\frac{1}{2}|6q+p|+\frac{1}2{}|p|-\frac{3}{2}}\oplus \mathbb{F}_{(-1)}^{\frac{1}{2}|p|-\frac{1}{2}} &\text{if $p$ is odd,}\ \\[10 pt]
	\mathbb{F}_{(0)}^{\frac{1}{2}|6q-p|+\frac{1}{2}|6q+p|+\frac{1}{2}|p|-1}\oplus \mathbb{F}_{(-1)}^{\frac{1}{2}|p|-1}  &\text{if $p$ is even, $p\neq 12k$,}\ \\[10 pt]
	\mathbb{F}_{(0)}^{\frac{1}{2}|6q-p|+\frac{1}{2}|6q+p|+\frac{1}{2}|p|-5}\oplus \mathbb{F}_{(-1)}^{\frac{1}{2}|p|+3} &\text{if $p=12k, p\neq 0$,}\ \\[10 pt]
	\mathbb{F}_{(1)}^4 \oplus\mathbb{F}_{(0)}^4 \oplus \mathbb{F}_{(-2)} &\text{if $p=0$.}\ \\
    \end{cases}
\end{equation*}
\end{customthm}

The organization of this paper is as follows. In Section 2 we provide some background on character varieties and the invariants $\HP$, $\HP_{\#}$, and $\lambda_{\SLC}$. In Section 3, we prove Theorem \ref{smallknot} and explain how it gives explicit formulae for the sheaf-theoretic $\SLC$ Floer cohomology of knot surgeries. In Section 4, we prove Theorem \ref{repcalc} and compute $\HP_{\#}$ for most surgeries on two-bridge knots. In Section 5, we determine the character variety of the composite knot $3_1\# 3_1$, allowing us to compute the A-polynomials of the square and granny knots in Section 6. In Section 7, we consider surgeries on composite knots and establish Theorems \ref{HPgranny} and \ref{HPsquare}. In Section 8, we apply our calculation of $\HP_{\#}$ of two-bridge knot surgeries to demonstrate the non-existence of a surgery exact triangle.

\textbf{Acknowledgements}. We have benefited from discussions with Laurent C\^{o}t\'{e}, Matt Kerr, Mohan Kumar, Jack Petok, Vivek Shende, and Burt Totaro. We also thank the two referees who provided detailed comments on the initial versions of this paper. We are particularly indebted to Ciprian Manolescu for his advice, support, and encouragement at all stages in the writing of this paper.

\section{Background}

For a topological space $X$, let $\mathscr{R}(X)$ denote the $\SLC$ representation scheme of $\pi_1(X)$, defined as
\begin{align*}
\mathscr{R}(X)=\operatorname{Hom}(\pi_1(X),\SLC).
\end{align*}
Assuming $\pi_1(X)$ is finitely generated, this set is naturally identified as the $\mathbb{C}$ points of an affine scheme. The character scheme $\mathscr{X}(X)$ is the GIT quotient of $\mathscr{R}(X)$ by the conjugation action of $\SLC$.

A representation $\rho\in\mathscr{R}(X)$ is irreducible if the image of $\rho$ is not contained in any proper Borel subgroup. The irreducible representations comprise the stable locus for the GIT action. Let $\mathscr{R}_{\text{irr}}(X)\subset\mathscr{R}(X)$ denote the open subscheme corresponding to irreducible representations, and similarly $\mathscr{X}_{\text{irr}}(X)\subset\mathscr{X}(X)$. When $X$ is a closed surface of genus $g>1$, $\mathscr{X}_{\text{irr}}(X)$ is a holomorphic symplectic manifold of dimension $6g-6$ \cite{goldman}.

To investigate character schemes of 3-manifolds, we take the perspective of \cite{AM} using Heegaard splittings. Let $Y=U_0\cup_{\Sigma}U_1$ be a Heegaard splitting of a closed, orientable, 3-manifold $Y$ into two handlebodies $U_0$ and $U_1$ with Heegaard surface $\Sigma$. Then $\mathscr{X}_{\text{irr}}(U_i)$ is a complex Lagrangian in $\mathscr{X}_{\text{irr}}(\Sigma)$ and $\mathscr{X}_{\text{irr}}(Y)=\mathscr{X}_{\text{irr}}(U_0)\cap\mathscr{X}_{\text{irr}}(U_1)$ is a Lagrangian intersection \cite{AM}.

In \cite{bussi}, the author applies the work of \cite{joyce} to define a perverse sheaf of vanishing cycles associated to any Lagrangian intersection in a holomorphic symplectic manifold. A perverse sheaf on a scheme $X$ is a certain type of object in $D_c^b(X)$, the bounded derived category of complexes of constructible sheaves on $X$. The category of perverse sheaves, $\operatorname{Perv}(X)$, is an abelian subcategory of $D_c^b(X)$. Perverse sheaves have wide application in algebraic geometry and are often used to study the topology of complex varieties. Given a function $f:U\to\mathbb{C}$ on a smooth scheme $U$, we can define a perverse sheaf of vanishing cycles, $\mathcal{PV}_f\in\operatorname{Perv}(U)$, with the property that the cohomology of the stalk of $\mathcal{PV}_f$ at a point $x$ is the cohomology of the Milnor fiber of $f$ at $x$ (up to a degree shift). The perverse sheaf associated to a Lagrangian intersection in \cite{bussi} is modeled on perverse sheaves of vanishing cycles.

In \cite{AM}, the authors use Bussi's construction to associate a perverse sheaf to a Heegaard splitting of a 3-manifold. Moreover, they show that the perverse sheaf is independent of the Heegaard splitting:

\begin{theorem}[\cite{AM}]
Let $Y$ be a closed, connected, oriented 3-manifold with a Heegaard splitting $Y=U_0\cup_{\Sigma} U_1$. Define the Lagrangians $L_i=\mathscr{X}_{\text{irr}}(U_i)\subset \mathscr{X}_{\text{irr}}(\Sigma)$. Apply the construction of \cite{bussi} to obtain a perverse sheaf $P_{L_0,L_1}\in\operatorname{Perv}(\mathscr{X}_{\text{irr}}(Y))$ associated to the Lagrangian intersection $\mathscr{X}_{\text{irr}}(Y)=L_0\cap L_1$. Then $P(Y):=P_{L_0,L_1}$ is an invariant of the 3-manifold $Y$ up to canonical isomorphism in $\operatorname{Perv}(\mathscr{X}_{\text{irr}}(Y))$.

\end{theorem}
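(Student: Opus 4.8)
The plan is to reduce the claim to invariance under the elementary moves relating Heegaard splittings, and then verify each move using the naturality of Bussi's construction \cite{bussi}. By the Reidemeister--Singer theorem, any two Heegaard splittings of $Y$ become isotopic after finitely many stabilizations; moreover, two splittings whose surfaces are carried into one another by a compatible isotopy produce the same Lagrangian pair up to an induced biholomorphic symplectomorphism of $\mathscr{X}_{\text{irr}}(\Sigma)$. Thus it suffices to show (i) that $P_{L_0,L_1}$ transforms canonically under a symplectomorphism of the ambient space carrying one Lagrangian pair to another, and (ii) that $P_{L_0,L_1}$ is unchanged, up to canonical isomorphism, under a single stabilization of the splitting.

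For (i) I would appeal directly to the functoriality built into the vanishing-cycle construction: a symplectomorphism $\phi\colon \mathscr{X}_{\text{irr}}(\Sigma)\to \mathscr{X}_{\text{irr}}(\Sigma')$ with $\phi(L_i)=L_i'$ restricts to an isomorphism of the intersection loci and intertwines the local Darboux presentations that write the Lagrangian intersection, \'etale-locally near a point, as the critical locus of a holomorphic superpotential. Since the glued perverse sheaf depends only on the germ of this d-critical structure \cite{joyce}, $\phi$ induces a canonical isomorphism $P_{L_0,L_1}\cong P_{L_0',L_1'}$. The one point requiring care is that the orientation data (the choices of square roots of the relevant canonical-bundle-type line bundle) are transported compatibly under $\phi$; this is formal once one knows the underlying d-critical structure is carried across.

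The crux is step (ii). Stabilization increases the Heegaard genus by one, so the ambient holomorphic symplectic manifold enlarges from $\mathscr{X}_{\text{irr}}(\Sigma_g)$ to $\mathscr{X}_{\text{irr}}(\Sigma_{g+1})$, of complex dimension six greater, while the intersection $\mathscr{X}_{\text{irr}}(Y)$ itself is unchanged because the new handle is trivial. The strategy is to produce a local model in which the stabilized Lagrangian pair differs from the old one, in a neighborhood of the critical locus, by a transverse factor, so that the stabilized superpotential is \'etale-locally the old superpotential plus a nondegenerate quadratic form in the new variables. A Thom--Sebastiani argument for perverse sheaves of vanishing cycles then identifies the stabilized sheaf with the old one after the expected degree shift, and the normalization built into Bussi's construction absorbs this shift, so that the resulting sheaf on $\mathscr{X}_{\text{irr}}(Y)$ is unchanged up to canonical isomorphism.

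I expect the main obstacle to be controlling the stabilization model globally rather than pointwise: one must check that the transverse-quadratic picture holds in a neighborhood of the whole, possibly singular and positive-dimensional, intersection locus, that the new directions genuinely decouple from the old superpotential there, and that the orientation and gluing data are compatible across this neighborhood so the local Thom--Sebastiani isomorphisms patch into a single canonical global isomorphism of perverse sheaves. A more conceptual route that avoids the move-by-move analysis would be to equip $\mathscr{X}_{\text{irr}}(Y)$ with a d-critical (equivalently, $(-1)$-shifted symplectic) structure intrinsic to $Y$ and independent of the splitting, together with canonical orientation data, and then invoke the theorem of \cite{joyce,bussi} that the perverse sheaf is determined by this data alone; the Heegaard presentation would then be merely one way to compute the same object.
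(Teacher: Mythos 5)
The first thing to note is that the paper you are working against does not prove this statement at all: it is quoted verbatim from \cite{AM} as background (the theorem carries the attribution ``[\cite{AM}]''), and the paper's entire ``proof'' is that citation. So the comparison here is really between your outline and the argument in \cite{AM} itself. At that level, your outline does track the actual strategy: \cite{AM} reduce invariance to the Reidemeister--Singer moves, handle isotopy/diffeomorphism of the splitting surface via the induced symplectomorphism of $\mathscr{X}_{\text{irr}}(\Sigma)$ and the (\'etale-local) functoriality of the vanishing-cycle sheaves, and then prove invariance under a single stabilization, which is the technical heart of their paper.

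The gap is that your step (ii) is precisely that heart, and you have only named its difficulties rather than resolved them. Writing the stabilized pair, near the whole intersection locus $\mathscr{X}_{\text{irr}}(Y)$, as ``old superpotential plus a nondegenerate quadratic form in the new variables'' is not automatic: one must exhibit a polarization of $\mathscr{X}_{\text{irr}}(\Sigma_{g+1})$ adapted to both stabilized Lagrangians simultaneously, check that the resulting local Thom--Sebastiani identifications are independent of the choices up to the same canonical isomorphisms used in Bussi's gluing, and verify that the orientation data (the square roots of the relevant determinant line bundles) match; in \cite{AM} this occupies a substantial part of the argument and is where the actual content lies. Your fallback ``more conceptual route'' is also not available as stated: the claim that the Heegaard-induced d-critical structure on $\mathscr{X}_{\text{irr}}(Y)$ agrees with an intrinsic one coming from a $(-1)$-shifted symplectic structure on the derived character stack is raised in \cite{AM} as a conjecture, not a theorem, so invoking it would be circular. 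In short, your skeleton is the right one, but as a proof it defers exactly the step that makes the theorem nontrivial.
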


We call its hypercohomology $\HP^*(Y)=\mathbb{H}^*(P(Y))$ the sheaf-theoretic $\SLC$ Floer cohomology of $Y$. They also define an invariant using the representation scheme that takes into account the reducibles, called the framed sheaf-theoretic $\SLC$ Floer cohomology of $Y$, $\HP_{\#}(Y)$. To define this invariant, we use the notion of the twisted character variety.

\begin{defn}
Let $\Sigma$ be a closed surface with a basepoint $w$. Let $D$ be a small disc neighborhood of $w$. We define the \emph{twisted character variety} as
\begin{align*}
\mathscr{X}_{\text{tw}}(\Sigma,w)=\{\rho\in\operatorname{Hom}(\pi_1(\Sigma-\{w\}),\SLC)\mid\rho(\partial D)=-I\}//\SLC.
\end{align*}
\end{defn}

The twisted character variety is a smooth, holomorphic symplectic manifold.

Given a Heegaard splitting $Y=U_0\cup_{\Sigma} U_1$ and a base point $z\in\Sigma$, we define $Y^{\#}=Y\#(T^2\times[0,1])$, where the connected sum is performed in a neighborhood of $z$, arranged so that $T^2\times [0,1/2]$ is attached to $U_0$ and $T^2\times [1/2,1]$ is attached to $U_1$. Let $\Sigma^{\#}=\Sigma\#(T^2\times[1/2])$ be the new splitting surface and $U_i^{\#}$ be the resulting compression bodies. Then, choose a basepoint $w\in T^2\times\{1/2\}$ away from the connected sum region. Let $\ell_0=w\times[0,1/2]$ and $\ell_1=w\times[1,1/2]$ be lines in each compression body.

In the holomorphic symplectic manifold $\mathscr{X}_{\text{tw}}(\Sigma^{\#},w)$, the subspaces $L_i^{\#}$ consisting of twisted representations that factor through $\pi_1(U_i^{\#}-\ell_i)$ are complex Lagrangian submanifolds. Furthermore, their intersection $L_0^{\#}\cap L_1^{\#}$ can be identified with the representation variety $\mathscr{R}(Y)$ \cite{AM}. Analogously to the previous situation, this leads to a perverse sheaf invariant of the 3-manifold, $\mathcal{P}_{\#}(Y)\in\operatorname{Perv}(\mathscr{R}(Y))$. We denote its hypercohomology $\HP_{\#}(Y)$, the framed sheaf-theoretic $\SLC$ Floer cohomology of $Y$.

To compute the invariants $\HP(Y)$ and $\HP_{\#}(Y)$, we can use the following proposition:
\begin{proposition}\label{smooth}
Let $X\subset \mathscr{X}_{\text{irr}}(Y)$ (resp. $X\subset \mathscr{R}_{\text{irr}}(Y)$) be a smooth topological component of the character scheme (resp. representation scheme) of complex dimension $d$. Then the restriction of the perverse sheaf $\mathcal{P}(Y)$ (resp. $\mathcal{P}_{\#}(Y)$) to $X$ is a local system with stalks isomorphic to $\mathbb{Z}[d]$. In particular, if $X$ is simply connected, then $\HP(Y)$ (resp. $\HP_{\#}(Y)$) contains $H^*(X)[d]$ as a direct summand.

Furthermore, if $[\rho]$ is an isolated irreducible character and $X\cong \op{PSL}(2,\mathbb{C})$ is the orbit of $[\rho]$ in the representation scheme, then the local system $P_{\#}(Y)\vert_X$ is trivial. 
\end{proposition}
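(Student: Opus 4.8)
The plan is to exploit the conjugation action of $\SLC$ on the representation scheme together with the fact that its central element $-I$ acts trivially, reducing the claimed triviality of the local system to a statement about an equivariant sheaf on a free orbit. First I would record the structure of $X$: since $[\rho]$ is an irreducible character, the stabilizer of $\rho$ under conjugation is the center $\{\pm I\}$, so the orbit is $X\cong\SLC/\{\pm I\}\cong\op{PSL}(2,\mathbb{C})$, a smooth topological component of $\mathscr{R}_{\text{irr}}(Y)$ of complex dimension $3$. By the first part of the Proposition, $\mathcal{P}_{\#}(Y)\vert_X$ is a rank-one local system with stalk $\mathbb{Z}[3]$; since $\pi_1(\op{PSL}(2,\mathbb{C}))\cong\mathbb{Z}/2$, it is classified by its monodromy in $\op{Hom}(\mathbb{Z}/2,\op{Aut}(\mathbb{Z}))=\op{Hom}(\mathbb{Z}/2,\{\pm1\})$, and it suffices to show this monodromy is trivial.

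The key observation is that conjugation by the central element $-I$ is the identity transformation on $\mathscr{R}(Y)=\op{Hom}(\pi_1(Y),\SLC)$: for every $\rho'$ one has $(-I)\rho'(-I)^{-1}=\rho'$. Because the entire Lagrangian-intersection construction of $\mathcal{P}_{\#}(Y)$ — the twisted character variety $\mathscr{X}_{\text{tw}}(\Sigma^{\#},w)$, the Lagrangians $L_i^{\#}$, the resulting $d$-critical structure, and its orientation data — is natural with respect to the conjugation action, the element $-I$ acts as the identity on all of this data, and in particular as the identity on the perverse sheaf $\mathcal{P}_{\#}(Y)$ itself. Hence the $\SLC$-equivariant structure factors through $\op{PSL}(2,\mathbb{C})$, acting on $\mathcal{P}_{\#}(Y)$ (both sheaf and base) rather than merely on the base.

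I would then conclude as follows. The $\op{PSL}(2,\mathbb{C})$-action on $X$ is free and transitive, so $X$ is a $\op{PSL}(2,\mathbb{C})$-torsor and $\mathcal{P}_{\#}(Y)\vert_X$ is a $\op{PSL}(2,\mathbb{C})$-equivariant local system on it. Choosing a basepoint identifies $X$ with $\op{PSL}(2,\mathbb{C})$ acting by translation, and the equivariant structure furnishes an algebraic family of isomorphisms from the stalk at the basepoint to every other stalk, which trivializes the local system; equivalently, equivariant local systems on a free orbit $G/\{e\}$ correspond to representations of the trivial stabilizer and are therefore trivial. Concretely, the generator of $\pi_1(X)$ is represented by the loop $t\mapsto\gamma(t)\rho\gamma(t)^{-1}$ for a path $\gamma$ from $I$ to $-I$ in $\SLC$, and its monodromy equals the action of $-I$ on the stalk, which we have just seen is trivial.

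The hard part will be the middle step: verifying that $-I$ acts trivially on the sheaf \emph{including the orientation data}, i.e.\ that the $\SLC$-equivariant structure really descends to a genuine $\op{PSL}(2,\mathbb{C})$-action on $\mathcal{P}_{\#}(Y)$. A priori an equivariant structure over a locus on which the group acts trivially can differ from the naive one by a sign character, so one must rule out such a twist in Bussi's construction. I expect this to follow because all the linear data underlying the orientation (the tangent spaces to the $L_i^{\#}$ and the normal and Hessian data along $X$) carry the $\SLC$-action through the adjoint representation on $\mathfrak{sl}_2$, where $\op{Ad}(-I)=\op{Id}$; moreover this action factors through the semisimple group $\op{SO}(3,\mathbb{C})\cong\op{PSL}(2,\mathbb{C})$, which admits no nontrivial characters, so no sign can be introduced in passing to the square root that defines the orientation.
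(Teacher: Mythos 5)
For context: the paper does not actually prove this proposition --- both halves are citations (the first part is Proposition 6.2 of \cite{AM}, the ``furthermore'' clause is Lemma 8.3 of \cite{AM}) --- so your proposal is a from-scratch attempt at a result of Abouzaid--Manolescu. Note first that you only address the second clause: the statement that $\mathcal{P}_{\#}(Y)\vert_X$ is a rank-one local system with stalk $\mathbb{Z}[3]$, which you invoke, is itself half of what is to be proved. The genuine gap, however, is in your key step, namely the claim that ``the entire Lagrangian-intersection construction of $\mathcal{P}_{\#}(Y)$ --- the twisted character variety, the Lagrangians $L_i^{\#}$, the $d$-critical structure, and its orientation data --- is natural with respect to the conjugation action.'' The conjugation action on $\mathscr{R}(Y)$ is \emph{not} induced by any action on the ambient space of the construction: $\mathscr{X}_{\text{tw}}(\Sigma^{\#},w)$ is already a quotient by conjugation and carries no residual conjugation action. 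Under the identification $L_0^{\#}\cap L_1^{\#}\cong\mathscr{R}(Y)$ (which itself depends on a choice of twisted representation of the punctured torus), conjugating $\rho$ by $g$ corresponds to conjugating only the punctured-torus part of a twisted representation of $\pi_1(\Sigma^{\#}-w)$; this preserves the twisting condition $\rho(\partial D)=-I$ only where the $\Sigma$-part of the puncture class maps to a central element, i.e.\ on the Lagrangians $L_i^{\#}$ themselves, and is not well defined on the ambient holomorphic symplectic manifold. Since Bussi's perverse sheaf is built from ambient data (polarizations identifying a neighborhood of $U\subset L_0^{\#}$ with $T^*U$, functions $f$ with $\Gamma_{df}$ matching $L_1^{\#}$, the induced $d$-critical structure and the canonical-bundle square roots), equivariance of $\mathcal{P}_{\#}(Y)$ is not a formal naturality statement; establishing it is essentially the entire content of the lemma you are trying to prove.

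The second gap is the one you flag as ``the hard part'' but then dispose of too quickly: the dangerous scenario for the orientation data is not that a $\op{PSL}(2,\mathbb{C})$-action on the square root gets twisted by a character (indeed $\op{PSL}(2,\mathbb{C})$ has none), but that the action on the square root exists only after pulling back to the nontrivial double cover $\SLC$, with $-I$ acting by $-1$. That possibility is perfectly consistent with everything you establish, and it would produce exactly the nontrivial rank-one local system on $X\simeq \mathbb{RP}^3$ that you are trying to exclude; the adjoint-representation heuristic does not rule it out, because the question is whether the action factors through $\op{PSL}(2,\mathbb{C})$ at all, not whether it can be twisted afterwards. So as written, your argument reduces the proposition to an unproved equivariance-with-orientation-data claim that is at least as strong as the proposition itself. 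A self-contained proof would require either constructing that equivariant structure honestly (for instance via a slice for the conjugation action along the orbit and a Thom--Sebastiani-type product decomposition of the vanishing-cycle sheaf near $X$), or simply citing \cite{AM} as the paper does.
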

\begin{proof}
The first part is Proposition 6.2 in \cite{AM}. The second part is Lemma 8.3 of \cite{AM}.
\end{proof}

When $X$ is smooth but not simply connected, then there is some ambiguity over the local system $P(Y)\vert_X$. This can be circumvented by using $\mathbb{Z}/2\mathbb{Z}$ coefficients.

\begin{corollary}\label{modtwo}
Assume $\mathscr{X}_{\text{irr}}(M)$ is smooth with topological components $X_i$ of complex dimensions $d_i$. Then $\HP(Y;\mathbb{Z}/2\mathbb{Z})=\bigoplus\limits_{i} H^*(X_i;\mathbb{Z}/2\mathbb{Z})[d_i]$.
\end{corollary}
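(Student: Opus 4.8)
The plan is to read the statement off Proposition~\ref{smooth} and to exploit the one feature special to $\mathbb{Z}/2\mathbb{Z}$ coefficients, namely that $\operatorname{Aut}(\mathbb{Z}/2\mathbb{Z})$ is trivial. The whole point of the corollary is to drop the simple-connectivity hypothesis from the first part of Proposition~\ref{smooth}, which was needed there precisely because over $\mathbb{Z}$ the local system $\mathcal{P}(Y)\vert_{X}$ could be nontrivial.

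First I would fix a topological component $X_i$ of $\mathscr{X}_{\text{irr}}(Y)$. Since $\mathscr{X}_{\text{irr}}(Y)$ is assumed smooth, each $X_i$ is a smooth component of some complex dimension $d_i$, so Proposition~\ref{smooth} applies on $X_i$: the restriction $\mathcal{P}(Y)\vert_{X_i}$ is $\mathcal{L}_i[d_i]$ for a rank-one local system $\mathcal{L}_i$ (with stalks isomorphic to $\mathbb{Z}$ over $\mathbb{Z}$ coefficients). The statement of Proposition~\ref{smooth} is proved over $\mathbb{Z}$ in \cite{AM}, but the vanishing-cycle construction of \cite{bussi} is compatible with change of coefficients, so the analogous description holds verbatim with $\mathbb{Z}/2\mathbb{Z}$ coefficients: $\mathcal{P}(Y;\mathbb{Z}/2\mathbb{Z})\vert_{X_i}=\mathcal{L}_i[d_i]$ with $\mathcal{L}_i$ now a rank-one local system of $\mathbb{Z}/2\mathbb{Z}$-modules.

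Next I would observe that $\mathcal{L}_i$ is forced to be trivial. A rank-one local system of $\mathbb{Z}/2\mathbb{Z}$-modules on $X_i$ is classified by a homomorphism $\pi_1(X_i)\to\operatorname{Aut}(\mathbb{Z}/2\mathbb{Z})$, and $\operatorname{Aut}(\mathbb{Z}/2\mathbb{Z})$ is the trivial group; hence $\mathcal{L}_i$ is the constant sheaf $\underline{\mathbb{Z}/2\mathbb{Z}}$, regardless of any monodromy or orientation data that could have made the $\mathbb{Z}$-coefficient local system nontrivial. This is the step that genuinely uses the field $\mathbb{Z}/2\mathbb{Z}$, and it is what removes the ambiguity discussed before the statement.

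Finally I would assemble the components. Because $\mathscr{X}_{\text{irr}}(Y)$ is smooth, it is the disjoint union of its connected topological components $X_i$, so the hypercohomology of $\mathcal{P}(Y;\mathbb{Z}/2\mathbb{Z})$ splits as the direct sum of the hypercohomologies of its restrictions to the $X_i$. Combining with the previous step,
\[
\HP^*(Y;\mathbb{Z}/2\mathbb{Z})=\bigoplus_i \mathbb{H}^*\bigl(X_i;\underline{\mathbb{Z}/2\mathbb{Z}}[d_i]\bigr)=\bigoplus_i H^*(X_i;\mathbb{Z}/2\mathbb{Z})[d_i],
\]
where the last identification is just that the hypercohomology of a shifted constant sheaf is the shifted singular cohomology. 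I do not anticipate a serious obstacle: the only point that deserves care is checking that Proposition~\ref{smooth} transports to $\mathbb{Z}/2\mathbb{Z}$ coefficients, after which the argument is entirely formal.
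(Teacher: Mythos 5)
Your proposal is correct and follows the same route as the paper: the paper's proof is a one-line appeal to Proposition~\ref{smooth} plus the observation that every local system of $\mathbb{Z}/2\mathbb{Z}$-modules is trivial because $\operatorname{Aut}(\mathbb{Z}/2\mathbb{Z})$ is trivial, which is exactly your key step. Your additional remarks (compatibility of the vanishing-cycle construction with change of coefficients, and the splitting of hypercohomology over components) are points the paper leaves implicit, so your write-up is simply a more detailed version of the same argument.
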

\begin{proof}
This follows from the fact that all local systems with $\mathbb{Z}/2\mathbb{Z}$ coefficients are trivial, since $\operatorname{Aut}(\mathbb{Z}/2\mathbb{Z})$ is trivial.
\end{proof}

Morally, $\HP(Y)$ should be a version of (the dual of) instanton Floer homology using the gauge group $\SLC$ instead of $\operatorname{SU}(2)$. Pursuing this analogy, the Euler characteristic of $\HP(Y)$, denoted $\lambda^P(Y)$, should be a type of Casson invariant, just as the Euler characteristic of instanton Floer homology is related to the original Casson invariant, which is a count of irreducible $\operatorname{SU}(2)$ characters. There is another invariant called the $\SLC$ Casson invariant defined in \cite{curtis} that counts isolated, irreducible $\SLC$ characters. To distinguish it from this invariant, $\lambda^P(Y)$ is called the full Casson invariant since it takes into account the positive dimensional components of the character scheme. When $\mathscr{X}_{\text{irr}}(Y)$ is zero-dimensional, $\lambda^P$ and $\lambda_{\SLC}$ agree. In fact, we have

\begin{theorem}\label{thm:zerodim}
Let $Y$ be a closed, orientable 3-manifold such that $\mathscr{X}_{\text{irr}}(Y)$ is zero-dimensional. Then $\HP(Y)\cong \mathbb{Z}_{(0)}^{\lambda_{SL(2,\mathbb{C})}(Y)}$, where $\lambda_{SL(2,\mathbb{C})}(Y)$ is the $SL(2,\mathbb{C})$ Casson invariant as defined in \cite{curtis}.
\end{theorem}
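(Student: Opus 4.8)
The plan is to exploit the fact that a perverse sheaf on a zero-dimensional space has no cohomology away from degree zero, reducing the computation to a local intersection-multiplicity count which I then match against Curtis's definition.

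First I would observe that since $\mathscr{X}_{\text{irr}}(Y)$ is zero-dimensional, its underlying analytic space is a finite set of points $\{[\rho_1],\dots,[\rho_n]\}$, and on such a space the perverse $t$-structure coincides with the standard one (the middle-perversity shift is by the dimension, which is $0$). Hence $P(Y)$, being perverse, is concentrated in cohomological degree $0$; equivalently $\mathcal{H}^k(P(Y))=0$ for $k\neq 0$. Because the space is a disjoint union of points, $P(Y)$ splits as a direct sum of skyscrapers and
\[
\HP^*(Y)=\mathbb{H}^*(P(Y))=\bigoplus_{j=1}^n \big(P(Y)\big)_{[\rho_j]}
\]
is concentrated in degree $0$. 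So all the content of the theorem lies in computing the rank of the degree-$0$ stalk at each point and in checking that it is free abelian.

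Next I would pin down each stalk using Bussi's local model. Near $[\rho_j]$ the Lagrangian intersection $\mathscr{X}_{\text{irr}}(Y)=L_0\cap L_1$ inside the holomorphic symplectic manifold $\mathscr{X}_{\text{irr}}(\Sigma)$ is modeled, after choosing a generating function, as the critical locus $\operatorname{Crit}(f)$ of a holomorphic $f\colon U\to\mathbb{C}$ on a smooth $U$ of dimension $d=\tfrac12\dim_{\mathbb C}\mathscr{X}_{\text{irr}}(\Sigma)$, with $P(Y)$ modeled by $\mathcal{PV}_f$. Since $[\rho_j]$ is isolated, $f$ has an isolated critical point there, so by Milnor's theorem its Milnor fiber is homotopy equivalent to a wedge of $\mu_j$ copies of $S^{d-1}$, where $\mu_j$ is the Milnor number. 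The stalk of $\mathcal{PV}_f$ is the shifted reduced cohomology of this Milnor fiber, hence a free abelian group of rank $\mu_j$ sitting in degree $0$ --- consistent with the degree-$0$ concentration above and refining the smooth case of Proposition~\ref{smooth} (where $\mu_j=1$). The twist/orientation data in the Joyce--Bussi construction is a $\mathbb{Z}/2$ local system and is irrelevant over a point, so it does not affect the rank. Summing, $\HP(Y)\cong\mathbb{Z}_{(0)}^{N}$ with $N=\sum_j\mu_j$.

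It then remains to identify $N$ with $\lambda_{\SLC}(Y)$, and this is the step I expect to be the main obstacle. The bridge is that the Milnor number of an isolated critical point of $f$ equals the dimension of the Jacobian algebra $\mathcal{O}_{U,[\rho_j]}/(\partial_1 f,\dots,\partial_d f)$, which is exactly the length of the local ring $\mathcal{O}_{L_0\cap L_1,[\rho_j]}$, i.e. the scheme-theoretic local intersection multiplicity of the two smooth Lagrangians at $[\rho_j]$. Thus $N$ is the total intersection number of $L_0$ and $L_1$ over the zero-dimensional locus, and since the intersection sits in a complex manifold every local multiplicity is a positive integer, matching the nonnegativity implicit in $\mathbb{Z}_{(0)}^N$. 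To finish I would compare this with Curtis's construction in \cite{curtis}: her $\SLC$ Casson invariant is defined from the same Heegaard-splitting Lagrangian intersection in $\mathscr{X}_{\text{irr}}(\Sigma)$, counting each isolated irreducible character with its local intersection multiplicity. The delicate point is to verify that the weight she assigns coincides with the length of $\mathcal{O}_{L_0\cap L_1,[\rho_j]}$ used above (rather than a perturbed transverse count or a differently normalized multiplicity), and that her hypotheses are satisfied here because $\mathscr{X}_{\text{irr}}(Y)$ is assumed zero-dimensional, so there are no positive-dimensional components requiring regularization. Granting this identification, $N=\lambda_{\SLC}(Y)$ and the theorem follows.
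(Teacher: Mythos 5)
Your proposal is correct and follows essentially the same route as the paper's proof: both identify the stalks of $P(Y)$ via Bussi's vanishing-cycle model, use Milnor's bouquet theorem at the isolated critical points to get $\mathbb{Z}^{\mu_p}$ in degree $0$, and then equate the Milnor number with the local intersection multiplicity of $L_0$ and $L_1$ appearing in Curtis's definition of $\lambda_{\SLC}$. Your additional observations (degree-$0$ concentration from the perverse $t$-structure on a zero-dimensional space, and the Jacobian-algebra/length identification making the multiplicity comparison precise) are refinements of, not departures from, the paper's argument.
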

\begin{proof}
The definition of $\HP(Y)$ uses the characterization of $\mathscr{X}_{\text{irr}}(Y)$ as a complex Lagrangian intersection $L_0\cap L_1$ in the character scheme of a Heegaard surface for $Y$. The stalk of the perverse sheaf $P^{\bullet}(Y)$ at a point $p\in \mathscr{X}_{\text{irr}}(Y)$ is the degree-shifted cohomology of the Milnor fiber of some function $f:U\to\mathbb{C}$, for $U$ an open neighborhood in one of the Lagrangians, such that the graph $\Gamma_{df}\subset T^*U$ is identified with $L_1$ in an appropriate polarization of the symplectic manifold near $p$. Since $\mathscr{X}_{\text{irr}}(Y)$ is zero-dimensional, we know that $f$ has an isolated singularity at $p$. Thus, the Milnor fiber has the homotopy type of a bouquet of spheres. The number of spheres in the bouquet is the Milnor number, denoted $\mu_p$. Then, the stalk is given by  $(P^{\bullet}(Y))_p\cong \mathbb{Z}_{(0)}^{\mu_p}$. The hypercohomology is $\HP(Y)\cong \mathbb{Z}_{(0)}^{\sum\mu_p}$, where the sum is over all components of $\mathscr{X}_{\text{irr}}(Y)$.
The definition of the Casson invariant in terms of intersection cycles given in \cite{curtis} is $\lambda_{SL(2,\mathbb{C})}(Y)=\sum_{p} n_p$, where the sum is over all zero-dimensional components of  $X_{\text{irr}}(Y)$, and $n_p$ is the intersection multiplicity of $L_0$ with $L_1$. But the Milnor number $\mu_p$ is equal to the intersection multiplicity of $\Gamma_{df}$ with $L_0$, hence the result follows.
\end{proof}

Theorem \ref{thm:zerodim} is useful when we can guarantee that $\mathscr{X}_{\text{irr}}(Y)$ is zero-dimensional. One case in which this holds is when $Y$ has no incompressible surfaces. 

\begin{corollary}\label{nsl}
Let $Y$ be a closed, orientable, not sufficiently large 3-manifold. Then $\HP(Y)\cong \mathbb{Z}_{(0)}^{\lambda_{SL(2,\mathbb{C})}(Y)}$.
\end{corollary}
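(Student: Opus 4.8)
The plan is to deduce this from Theorem~\ref{thm:zerodim}: that theorem gives $\HP(Y)\cong\mathbb{Z}_{(0)}^{\lambda_{SL(2,\mathbb{C})}(Y)}$ whenever $\mathscr{X}_{\text{irr}}(Y)$ is zero-dimensional, so the entire content of the corollary is the assertion that a not sufficiently large $Y$—that is, one containing no closed two-sided incompressible surface—has a zero-dimensional irreducible character scheme. I would prove this in contrapositive form: if $\mathscr{X}_{\text{irr}}(Y)$ has a component of positive dimension, then $Y$ is sufficiently large.

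First I would reduce to a curve. A positive-dimensional component of the affine scheme $\mathscr{X}_{\text{irr}}(Y)$ contains an irreducible affine curve $C$ all of whose points are irreducible characters, and over the function field $\mathbb{C}(C)$ one has the tautological representation $\pi_1(Y)\to\operatorname{SL}(2,\mathbb{C}(C))$. The next step is to feed $C$ into the machinery of Culler and Shalen. A smooth projective model $\widetilde C$ has finitely many ideal points, and at any such point $\tilde x$ the characters degenerate, forcing some trace function $\tr_\gamma$ (with $\gamma\in\pi_1(Y)$) to have a pole; equivalently, the valuation $v$ on $\mathbb{C}(C)$ given by order of vanishing at $\tilde x$ satisfies $v(\tr_\gamma)<0$. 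Viewing the tautological representation over the valued field $(\mathbb{C}(C),v)$ and letting it act on the Bruhat--Tits tree $T$ of $\operatorname{SL}_2$ then produces, by Bass--Serre theory, a nontrivial action of $\pi_1(Y)$ on $T$ in which $\gamma$ acts without a fixed point.

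The second and decisive step is to realize this tree action geometrically. Because some trace function blows up at $\tilde x$, the action on $T$ has no global fixed point and hence determines a nontrivial splitting of $\pi_1(Y)$ as an amalgamated product or HNN extension. The Culler--Shalen construction then produces, dual to this action, a nonempty closed incompressible (two-sided, essential) surface $S\subset Y$; its existence shows that $Y$ is sufficiently large, which is the desired contradiction. Together with Theorem~\ref{thm:zerodim}, this yields the corollary.

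The hard part is this last step: ensuring that the action on the tree is genuinely fixed-point free and that the surface it produces is incompressible rather than compressible or a trivial sphere. This is precisely where one uses that $C$ parametrizes irreducible characters—reducible characters can yield tree actions with a global fixed point, and hence no surface—and where one invokes the standard fact that a genuine ideal point always forces a trace function to have a pole, excluding the degenerate situation. Since $Y$ is closed there is no boundary to complicate the surface, so once a nontrivial splitting is in hand the passage to a closed essential surface is the standard conclusion of the Culler--Shalen machine.
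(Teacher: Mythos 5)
Your proposal is correct and follows the same route as the paper: reduce the corollary to Theorem~\ref{thm:zerodim} by showing that a not sufficiently large $Y$ has zero-dimensional irreducible character scheme. The only difference is that the paper simply cites the main result of Culler--Shalen for that fact, whereas you unpack its proof (ideal points, Bruhat--Tits tree actions, and dual incompressible surfaces), which is exactly the content of the cited result.
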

\begin{proof}
By the main result of \cite{culler-shalen}, the character variety of a NSL 3-manifold is zero-dimensional. So, Theorem \ref{thm:zerodim} applies.
\end{proof}

\section{Surgeries on Small Knots and the $\lambda_{SL(2,\mathbb{C})}$ Casson Invariant}

\subsection{Surgeries on small knots}

By applying Theorem \ref{thm:zerodim}, we can establish the connection between $\HP(Y)$ for $Y$ a surgery on a small knot in $S^3$ and the $\SLC$ Casson invariant, $\lambda_{\SLC}(Y)$ as given in Theorem \ref{smallknot}.
\begin{proof}[Proof of Theorem \ref{smallknot}]
For any knot $K$, an incompressible surface in $Y=S^3_{p/q}(K)$ either comes from a closed essential surface in $S^3\backslash K$ or from an essential surface in $S^3\backslash K$ with boundary slope $p/q$ \cite{dehnknots}. When $K$ is small, the first case is ruled out. So, Corollary \ref{nsl} shows that $\HP(Y)\cong \mathbb{Z}_{(0)}^{\lambda_{SL(2,\mathbb{C})}(Y)}$ whenever $p/q$ is not a boundary slope. Note that there are only finitely many boundary slopes by \cite{hatcherboundary}. 
\end{proof}
The invariant $\lambda_{SL(2,\mathbb{C})}$ has been computed for a range of 3-manifolds, including surgeries on many families of knots \cite{curtis}\cite{2bridge}\cite{seifertfiber}. We provide a few examples of how those results yield formulae for the sheaf-theoretic Floer cohomology of surgeries on knots.

First we review the results of \cite{curtis} in order to consider the case of a general small knot. Let $M=S^3\backslash N(K)$ be a knot exterior.  Let $i:\partial M\to M$ denote the inclusion and $r:\mathscr{X}(M)\to \mathscr{X}(\partial M)$ denote the restriction map.
\begin{defn}
A slope $\gamma\in\partial M$ is \emph{irregular} if there exists an irreducible representation $\rho$ of $\pi_1(M)$ such that:

(i) the character $[\rho]$ is in a one-dimensional component $\mathcal{X}_i$ of $\mathscr{X}_{\text{irr}}(M)$ such that $r(\mathcal{X}_i)$ is also one-dimensional;

(ii) $\tr(\rho(\alpha))=\pm 2$ for all $\alpha\in \partial M$;

(iii) $\ker(\rho\circ i_*)$ is cyclic, generated by $[\gamma]$.
\end{defn}
\begin{defn}\label{admissible}
A slope $p/q$ is \emph{admissible} if:

(i) It is regular and not a strict boundary slope;

(ii) No $p'$-th root of unity is a root of the Alexander polynomial of $K$, where $p'=p$ for $p$ odd and $p'=\frac{p}{2}$ for $p$ even.
\end{defn}
With these definitions, we can state Theorem 4.8 of \cite{curtis}:
\begin{theorem}[\cite{curtis}]\label{thm:curtis}
Let $K$ be a small knot in $S^3$ with complement $M$. Let $\{\mathcal{X}_i\}$ be the collection of one-dimensional components of $\mathscr{X}(M)$ such that $r(\mathcal{X}_i)$ is one-dimensional and such that $\mathcal{X}_i$ contains an irreducible representation. Then there exist integral weights $m_i>0$ depending only on $\mathcal{X}_i$ and non-negative $E_0,E_1\in\frac{1}{2}\mathbb{Z}$ depending only on $K$ such that for every admissble $\frac{p}{q}$ we have
\begin{align*}
\lambda_{\operatorname{SL}(2,\mathbb{C})}(S^3_{p/q}(K))=\frac{1}{2}\sum\limits_{i} m_i ||p\mathscr{M}+q\mathscr{L}||_i-E_{\sigma(p)},
\end{align*}
where $\sigma(p)\in\{0,1\}$ is the parity, $||-||_i$ is the Culler-Shalen seminorm associated to $\mathcal{X}_i$ and $\sum\limits_{i} m_i ||p\mathscr{M}+q\mathscr{L}||_i:=||p/q||_T$ is the total Culler-Shalen seminorm.
\end{theorem}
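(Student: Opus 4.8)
\section*{Proof proposal for \texorpdfstring{Theorem \ref{thm:curtis}}{the final statement}}

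The plan is to interpret $\lambda_{\SLC}(Y)$, for $Y=S^3_{p/q}(K)$, as a weighted count of isolated irreducible characters of $\pi_1(Y)$ and then to match that count against the degrees of trace functions on the curves $\mathcal{X}_i$. Filling along the slope $\gamma=p\mathscr{M}+q\mathscr{L}$ (with $\mathscr{M},\mathscr{L}$ the meridian and longitude) gives a surjection $\pi_1(M)\twoheadrightarrow\pi_1(Y)$, and a representation $\rho$ of $\pi_1(M)$ descends to $\pi_1(Y)$ exactly when $\rho(\gamma)=I$. Thus $\mathscr{X}_{\text{irr}}(Y)$ is the subscheme of $\mathscr{X}_{\text{irr}}(M)$ cut out by $\rho(\gamma)=I$. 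For admissible $p/q$ I would first argue that every isolated irreducible character of $Y$ lies on one of the one-dimensional components $\mathcal{X}_i$: there are no isolated irreducible characters of $M$ itself, since every component of $\mathscr{X}_{\text{irr}}(M)$ containing an irreducible character has dimension $\geq 1$, so an isolated character of $Y$ must come from a positive-dimensional component of $M$ that becomes rigid after imposing $\rho(\gamma)=I$. The hypothesis that $r(\mathcal{X}_i)$ is one-dimensional is exactly what makes $\rho(\gamma)=I$ a nontrivial (codimension-one) condition on $\mathcal{X}_i$; on components where $r$ is constant the condition either fails everywhere or holds on a positive-dimensional locus, neither of which contributes. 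Hence $\lambda_{\SLC}(Y)=\sum_i \#\{\chi\in\mathcal{X}_i : \rho(\gamma)=I\}$, each point counted with its intersection multiplicity.

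Next I would pass to the smooth projective model $\tilde{\mathcal{X}}_i$ of each curve and study the regular function $f_\gamma=I_\gamma^2-4$, where $I_\gamma=\tr(\rho(\gamma))$. Writing $\xi$ for the eigenvalue of $\rho(\gamma)$ one has $I_\gamma=\xi+\xi^{-1}$ and $f_\gamma=(\xi-\xi^{-1})^2$, so the zeros of $f_\gamma$ detect $\xi=\pm1$, i.e.\ $\rho(\gamma)=\pm I$ or a parabolic with those eigenvalues. By the Culler--Shalen construction, $||\gamma||_i$ is the degree of $f_\gamma$ regarded as a map $\tilde{\mathcal{X}}_i\to\mathbb{CP}^1$, and the weight $m_i$ records the degree with which $\mathcal{X}_i$ covers its image $r(\mathcal{X}_i)$; summing $m_i\,||p\mathscr{M}+q\mathscr{L}||_i$ over $i$ yields the total seminorm $||p/q||_T$. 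The factor $\tfrac12$ in the statement reflects that I want only the branch $\rho(\gamma)=I$ (eigenvalue $+1$): working with the eigenvalue function $\xi_\gamma=\xi_{\mathscr{M}}^{\,p}\xi_{\mathscr{L}}^{\,q}$ on the double cover where $\xi_{\mathscr{M}},\xi_{\mathscr{L}}$ are defined, the solutions of $\xi_\gamma=1$ account for half the zeros of $f_\gamma$, up to the boundary corrections below.

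The terms $E_{\sigma(p)}$ measure the gap between the degree count on the compact curve $\tilde{\mathcal{X}}_i$ and the number of honest isolated irreducible characters of $Y$. I would isolate three sources to subtract: (a) the ideal points of $\tilde{\mathcal{X}}_i$, which contribute to $\deg f_\gamma$ but are not characters of $M$; (b) characters where $\rho(\gamma)$ is parabolic but $\neq I$, which satisfy $\xi_\gamma=1$ yet fail to descend to $Y$; and (c) reducible and non-abelian reducible characters. Using admissibility, I would show each contribution is independent of $p/q$: regularity of $p/q$ together with the absence of strict boundary slopes pins down the behaviour of $\xi_{\mathscr{M}},\xi_{\mathscr{L}}$ at the ideal points and controls the parabolic locus, while the condition that no $p'$-th root of unity is a root of the Alexander polynomial prevents the non-abelian reducibles from meeting the surgery locus. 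The parity dependence arises because the ideal-point and $-I$ contributions to the count of $\xi_\gamma=1$ depend on whether $p$ is even or odd, through the exponent in $\xi_{\mathscr{M}}^{\,p}$, so the total correction collapses to one of two knot-dependent values $E_0,E_1\in\tfrac12\mathbb{Z}$.

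The main obstacle is the analysis underlying the previous paragraph: proving that the ideal-point and parabolic contributions assemble into a single constant depending only on $K$ and not on $p/q$. This requires a careful local study of the eigenvalue functions near the ends of $\tilde{\mathcal{X}}_i$, together with a proof that the intersection multiplicity defining $\lambda_{\SLC}$ (a scheme-theoretic length, as in the Lagrangian-intersection description used in \autoref{thm:zerodim}) coincides with the order of vanishing of $\xi_\gamma-1$. Reconciling these two notions of multiplicity, and verifying that the admissibility hypotheses are exactly what make the correction uniform in $p/q$, is the technical heart of the argument; the seminorm bookkeeping in the first two paragraphs is comparatively formal once this is in place.
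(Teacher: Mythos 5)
The first thing to note is that the paper does not prove this statement at all: it is Theorem 4.8 of Curtis's paper, imported verbatim with the citation \cite{curtis} and used as a black box (together with Theorem 1) to get Proposition \ref{HP2bridge}. So there is no internal proof to compare yours against; the only meaningful comparison is with Curtis's own argument, of which your outline is a reasonable reconstruction at the level of strategy: $\lambda_{\SLC}$ of an admissible surgery is a multiplicity-weighted count of isolated irreducible characters, smallness forces these onto the curves $\mathcal{X}_i$, and the seminorm $\|\gamma\|_i=\deg(f_\gamma)$ with $f_\gamma=\tr^2-4$ converts that count into the stated formula up to corrections from zeros of $f_\gamma$ that are not characters of $Y$.

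Measured against the actual argument, two mechanisms are glossed in ways that would matter if you carried the plan out. First, the factor $\frac{1}{2}$ and the parity dependence do not come from passing to the eigenvalue double cover; there is no a priori reason the zeros with $\xi_\gamma=1$ are half of all zeros of $f_\gamma$. The real mechanism is the sign-twisting involution $\rho\mapsto\rho\otimes\varepsilon$, where $\varepsilon\colon\pi_1(M)\to\{\pm 1\}$ sends the meridian to $-1$: since the longitude is null-homologous, this multiplies $\rho(\gamma)$ by $(-1)^p$, so for $p$ odd it exchanges the loci $\{\tr\rho(\gamma)=2\}$ and $\{\tr\rho(\gamma)=-2\}$, while for $p$ even it preserves each; this single observation produces both the $\frac{1}{2}$ and the dependence of $E$ on $\sigma(p)$. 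Second, your correction (a) is vacuous under admissibility: when $p/q$ is not a strict boundary slope, $I_\gamma$ (hence $f_\gamma$) has a pole at every ideal point of $\tilde{\mathcal{X}}_i$, so ideal points contribute no zeros; the corrections that survive, and that the regularity and Alexander-polynomial hypotheses are designed to control, are the characters whose restriction to the whole boundary torus is parabolic or central, and the non-abelian reducibles. Beyond this, your interpretation of the weights $m_i$ is asserted rather than derived, and --- as you say yourself --- the uniformity of $E_0,E_1$ in $p/q$ and the identification of the Lagrangian-intersection multiplicity with the order of vanishing of $f_\gamma$ constitute the technical content of Curtis's paper. So what you have is a credible plan whose hard steps are deferred, not a proof; that is a genuine gap, though an honestly acknowledged one.
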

There are only finitely many irregular slopes and only finitely many boundary slopes \cite{curtis}. Provided $p$ is chosen so that no $p'$-th root of unity is a root of the Alexander polynomial, where $p'$ is as in Definition \ref{admissible}(ii), the above theorem only excludes finitely many slopes $p/q$. Thus, by combining Theorem \ref{smallknot} with Theorem \ref{thm:curtis} we obtain a formula for the sheaf-theoretic Floer cohomology for most surgeries on small knots. 
\subsection{$\HP$ for surgeries on two-bridge knots}
Let $K(\alpha,\beta)$ be the two-bridge knot as defined by the notation in \cite{burdezieschang}. The $\SLC$ Casson invariants of surgeries on these small knots were computed in Theorem 2.5 of \cite{2bridge}. Applying their result and Theorem \ref{smallknot}, we obtain
\begin{proposition}\label{HP2bridge}
Let $S^3_{p/q}(K(\alpha,\beta))$ denote $p/q$ surgery on the two-bridge knot $K(\alpha,\beta)$. Assume $p/q$ is not a boundary slope and no $p'$-th root of unity is a root of the Alexander polynomial of $K(\alpha,\beta)$, where $p'=p$ for $p$ odd and $p'=p/2$ for $p$ even. Let $||p/q||_T$ denote the total Culler-Shalen seminorm of $p/q$. Then,
$$\HP(S^3_{p/q}(K(\alpha,\beta)))= \begin{cases} \mathbb{Z}_{(0)}^{\frac{1}{2}||p/q||_T} &\text{if $p$ is even,}\ \\[10 pt]
	 \mathbb{Z}_{(0)}^{\frac{1}{2}||p/q||_T-\frac{1}{4}(\alpha-1)} &\text{if $p$ is odd.}\ \\
    \end{cases}$$
\end{proposition}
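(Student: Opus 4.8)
The plan is to deduce this from Theorem \ref{smallknot} together with the explicit $\SLC$ Casson invariant computation of \cite{2bridge}, so that the proposition becomes a direct application rather than a new calculation. The first and essential input is that every two-bridge knot is a \emph{small} knot: by the classification of incompressible surfaces in two-bridge knot complements due to Hatcher and Thurston, such complements contain no closed essential surfaces. Granting this, Theorem \ref{smallknot} applies verbatim, so that for every $p/q$ outside the finite set of boundary slopes we have $\HP(S^3_{p/q}(K(\alpha,\beta))) \cong \mathbb{Z}_{(0)}^{\lambda_{\SLC}(S^3_{p/q}(K(\alpha,\beta)))}$. This reduces the entire problem to reading off the value of $\lambda_{\SLC}$ for these surgeries.

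For the second step I would invoke the Casson invariant formula of \cite{2bridge} (their Theorem 2.5), which is the specialization of the general small-knot formula in Theorem \ref{thm:curtis} to the two-bridge case. The content of that result, translated into the language of Theorem \ref{thm:curtis}, is the identification of the correction constants as $E_0 = 0$ and $E_1 = \tfrac14(\alpha - 1)$, giving
\begin{equation*}
\lambda_{\SLC}(S^3_{p/q}(K(\alpha,\beta))) = \tfrac12\, ||p/q||_T - E_{\sigma(p)}.
\end{equation*}
This is precisely $\tfrac12\, ||p/q||_T$ when $p$ is even and $\tfrac12\, ||p/q||_T - \tfrac14(\alpha-1)$ when $p$ is odd. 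Substituting these two values into the isomorphism from the first step reproduces the two cases of the proposition, so no further computation is needed.

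The point requiring genuine care, and the one I expect to be the main obstacle, is reconciling the hypotheses of the two theorems being combined. Theorem \ref{smallknot} only excludes boundary slopes, whereas the Casson formula of Theorem \ref{thm:curtis} holds for \emph{admissible} slopes in the sense of Definition \ref{admissible}, which in addition to the root-of-unity condition on the Alexander polynomial also requires $p/q$ to be \emph{regular}. I would therefore need to verify that the stated hypotheses — $p/q$ not a boundary slope, together with the condition that no $p'$-th root of unity is a root of the Alexander polynomial — are sufficient in the two-bridge setting. Concretely, I expect to confirm that \cite{2bridge} establishes the displayed formula for all non-boundary slopes meeting the root-of-unity hypothesis, so that irregular slopes either do not arise or already fit the formula; once this matching of hypotheses is settled, the proof is a one-line substitution.
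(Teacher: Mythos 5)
Your proposal matches the paper's own (very brief) argument exactly: the paper derives Proposition \ref{HP2bridge} by noting that two-bridge knots are small and combining Theorem \ref{smallknot} with the $\lambda_{\SLC}$ computation in Theorem 2.5 of \cite{2bridge}, which is precisely your two-step substitution with $E_0=0$ and $E_1=\tfrac14(\alpha-1)$. Your added scrutiny of the regularity hypothesis is reasonable but resolves just as you anticipate, since the cited result in \cite{2bridge} is stated for two-bridge knots under exactly the hypotheses of the proposition.
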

\section{$\HP_{\#}$ for surgeries on two-bridge knots}
In this section, we prove Theorem \ref{repcalc} computing $\HP_{\#}(Y)$ when $Y=S^3_{p/q}(K)$ is a surgery on a knot $K$ (under some strong restrictions).

\begin{proof}[Proof of Theorem \ref{repcalc}]
We are assuming that no $p'$-th root of unity is a root of the Alexander polynomial of $K$, where $p'=p$ for $p$ odd and $p'=\frac{p}{2}$ for $p$ even. By Lemma \ref{nars}, this condition ensures that there are no non-abelian reducibles. We first consider the abelian representations. These representations are those which factor through $H_1(Y;\mathbb{Z})\cong\mathbb{Z}/p\mathbb{Z}$. So, we have 
\begin{align*}
\mathscr{R}_{\text{ab}}(Y)\cong \operatorname{Hom}(\mathbb{Z}/p\mathbb{Z},\SLC).
\end{align*}

Letting $a$ denote the generator of $\mathbb{Z}/p\mathbb{Z}$, we see that $\rho(a)$ can be any $\SLC$ matrix with eigenvalues $p$-th roots of unity. There are $|p|$ such roots. When $p$ is even, two of these roots are $\pm 1$, and $\pm I$ are the unique matrices with thoses eigenvalues. The other $|p|-2$ roots come in pairs $\zeta,\zeta^{-1}$ yielding conjugate $\SLC$ matrices. This gives $\frac{1}{2}(|p|-2)$ distinct conjugacy classes of matrices, each of which gives a conjugation orbit's worth of choices, which for an abelian, non-central representation is a copy of $T\mathbb{CP}^1$. Thus, we obtain 2 points and $\frac{1}{2}(|p|-2)$ copies of $T\mathbb{CP}^1$ in the representation variety. Similarly, for $p$ odd, there is only one central representation and $\frac{1}{2}(|p|-1)$ copies of $T\mathbb{CP}^1$.

For the irreducible representations, the Casson invariant $\lambda_{\SLC}(Y)$ gives the count of isolated points with multiplicity in the character variety. Since we are assuming that $\mathscr{X}_{\text{irr}}(Y)$ is zero-dimensional, the isolated points account for all irreducibles. Since we assume the scheme is smooth, the multiplicities of the points are all $1$ and the Casson invariant gives an honest count of points. The conjugation orbit of each isolated irreducible representation is a copy of $\operatorname{PSL}(2,\mathbb{C})$.

For an irreducible representation $\rho$, the character scheme is smooth at $[\rho]$ if and only if the representation scheme is smooth at $\rho$ by Lemma 2.4 in \cite{AM}. Since we are assuming that the character scheme is smooth, we conclude that the representation scheme is smooth. Hence, we can apply Proposition \ref{smooth} to compute $\HP_{\#}$.
\end{proof}
One situation in which we can apply Theorem \ref{repcalc} is when $K$ is a two-bridge knot.
\begin{proposition}\label{HPframed2bridge}
Let $K(\alpha,\beta)$ be a two-bridge knot and let $Y=S^3_{p/q}(K(\alpha,\beta))$ denote the 3-manifold obtained from $p/q$ Dehn surgery on $K$. Let $p'=p$ for $p$ odd and $p'=\frac{p}{2}$ for $p$ even. Assume that $p/q$ is not a boundary slope and that no $p'$-th root of unity is a root of the Alexander polynomial of $K$. Then,
\begin{equation*}
\HP^*_{\#}(Y)= H^*(\text{pt})^{\oplus 2-\sigma(p)} \oplus H^{*+2}(\mathbb{CP}^1)^{\oplus \frac{1}{2}(|p|-2+\sigma(p))} \oplus H^{*+3} (\op{PSL}(2,\mathbb{C}))^{\oplus (\frac{1}{2}||p/q||_T-\sigma(p)\frac{\alpha-1}{4})},
\end{equation*}
where $\sigma(p)\in\{0,1\}$ is the parity of $p$.
\end{proposition}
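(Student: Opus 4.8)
The plan is to obtain Proposition \ref{HPframed2bridge} as a direct specialization of Theorem \ref{repcalc}, so the task splits into checking the three hypotheses of that theorem for $Y=S^3_{p/q}(K(\alpha,\beta))$ and then inserting the known value of $\lambda_{\SLC}(Y)$. First I would invoke the classical fact that every two-bridge knot is small. The argument in the proof of Theorem \ref{smallknot} then applies unchanged: since $p/q$ is assumed not to be a boundary slope, $Y$ is not sufficiently large, so by \cite{culler-shalen} the scheme $\mathscr{X}_{\text{irr}}(Y)$ is zero-dimensional. This settles the first hypothesis. The second hypothesis of Theorem \ref{repcalc}, that no $p'$-th root of unity is a root of the Alexander polynomial, is assumed outright in the proposition; via Lemma \ref{nars} it rules out non-abelian reducible characters, exactly as the proof of Theorem \ref{repcalc} requires.

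The remaining hypothesis, smoothness of $\mathscr{X}_{\text{irr}}(Y)$, is not assumed in the proposition, and this is where I expect the real work to lie. The approach is to use the explicit description of the two-bridge character variety: $\mathscr{X}_{\text{irr}}(M)$ for $M=S^3\setminus N(K(\alpha,\beta))$ is an affine plane curve cut out by a single defining equation in the trace coordinates of its two meridional generators (essentially the Riley polynomial), and $\mathscr{X}_{\text{irr}}(Y)$ is the scheme-theoretic intersection of the image of this curve under the restriction map with the curve defined by the $p/q$-surgery relation. For a zero-dimensional scheme, smoothness is equivalent to reducedness, i.e. to every intersection point having multiplicity one, i.e. to transversality of these two curves. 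Establishing this transversality under the stated genericity of the slope is the main obstacle. I would extract it from the seminorm analysis underlying Theorem 2.5 of \cite{2bridge}: there the contributions to $\lambda_{\SLC}$ are organized by the Culler--Shalen seminorm, whose values are exactly the relevant intersection numbers, and the non-boundary-slope and admissibility conditions are what force these local contributions to be simple.

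Granting smoothness, Theorem \ref{repcalc} applies directly and yields
\[
\HP^*_{\#}(Y)= H^*(\text{pt})^{\oplus 2-\sigma(p)} \oplus H^{*+2}(\mathbb{CP}^1)^{\oplus \frac{1}{2}(|p|-2+\sigma(p))} \oplus H^{*+3} (\op{PSL}(2,\mathbb{C}))^{\oplus \lambda_{\SLC}(Y)}.
\]
It remains only to identify the exponent of the $\op{PSL}(2,\mathbb{C})$-summand. Comparing Proposition \ref{HP2bridge} with Theorem \ref{thm:zerodim}, which gives $\HP(Y)\cong\mathbb{Z}_{(0)}^{\lambda_{\SLC}(Y)}$, reads off
\[
\lambda_{\SLC}(Y)=\tfrac{1}{2}||p/q||_T-\sigma(p)\tfrac{\alpha-1}{4},
\]
uniformly in both parities, using $\sigma(p)=0$ for $p$ even and $\sigma(p)=1$ for $p$ odd. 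Substituting this expression into the exponent above produces the stated formula, completing the proof.
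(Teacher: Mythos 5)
Your overall route is the same as the paper's: verify the three hypotheses of Theorem \ref{repcalc} for $Y=S^3_{p/q}(K(\alpha,\beta))$ (zero-dimensionality from smallness of two-bridge knots plus Culler--Shalen theory, absence of non-abelian reducibles from the root-of-unity hypothesis via Lemma \ref{nars}, smoothness from \cite{2bridge}), then substitute the known value of $\lambda_{\SLC}(Y)$. Your identification of the exponent by comparing Proposition \ref{HP2bridge} with Theorem \ref{thm:zerodim} is legitimate (both are already established, and the rank of a free abelian group determines it), if slightly roundabout compared with quoting Theorem 2.5 of \cite{2bridge} directly.

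The one genuine gap is the smoothness step, which you correctly single out as the real work but then close with an argument that would not succeed as described. You propose to extract transversality ``from the seminorm analysis underlying Theorem 2.5 of \cite{2bridge},'' on the grounds that the seminorm values are exactly the relevant intersection numbers. But that is precisely why this cannot work: both Curtis's $\lambda_{\SLC}$ and the Culler--Shalen seminorm are defined as counts of intersection points \emph{with multiplicity} (the $n_p$ appearing in the proof of Theorem \ref{thm:zerodim}), so the formula $\lambda_{\SLC}=\frac{1}{2}\|p/q\|_T-E_{\sigma(p)}$ holds whether or not the individual multiplicities equal one, and no bookkeeping with seminorms can detect reducedness of the intersection. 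Smoothness is a separate, strictly stronger statement, and it is needed twice: to invoke Theorem \ref{repcalc} at all, and to know that $\lambda_{\SLC}(Y)$ is an honest count of $\op{PSL}(2,\mathbb{C})$-orbits in $\mathscr{R}_{\text{irr}}(Y)$ rather than a count with multiplicity. The paper closes this step by citing Proposition 2.2 of \cite{2bridge}, which asserts exactly that, under these hypotheses on the slope and the Alexander polynomial, the zero-dimensional components of $\mathscr{X}_{\text{irr}}(S^3_{p/q}(K))$ are smooth; that result rests on an independent argument, not on the seminorm formula. Replacing your sketched transversality argument with this citation repairs the proof.
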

\begin{proof}
According to Proposition 2.2 of \cite{2bridge}, the zero-dimensional components of $\mathscr{X}_{\text{irr}}(S^3_{p/q}(K))$ are smooth under these hypotheses. Since $p/q$ is not a boundary slope, we know that $\mathscr{X}_{\text{irr}}(S^3_{p/q}(K))$ is entirely zero-dimensional by Culler-Shalen theory. Thus, Theorem \ref{repcalc} implies the result.
\end{proof}
In Section 8, we use this calculation to show that there does not exist an exact triangle relating $\HP_{\#}$ for surgeries on two-bridge knots.

\begin{remark}The hypothesis that the character scheme is zero-dimensional allows us to identify $\mathscr{R}_{\text{irr}}$ as some copies of $\operatorname{PSL}(2,\mathbb{C})$. In general, assuming $\mathscr{X}_{\text{irr}}$ is smooth, $\mathscr{R}_{\text{irr}}$ would be a fibration over $\mathscr{X}_{\text{irr}}$ with fibers diffeomorphic to $\operatorname{PSL}(2,\mathbb{C})$. Determining the precise structure of this fibration is not immediate. In Propositions \ref{grannyvar} and \ref{squarevar}, we compute the character schemes for the granny and square knots and show they are not a disjoint union of contractible components. So, one cannot immediately deduce the cohomology groups of $\mathscr{R}_{\text{irr}}$ (which give the interesting part of $\HP_{\#}$) from those of $\mathscr{X}_{\text{irr}}$.\end{remark}

\begin{section}{The character variety of $S^3\backslash (3_1\# 3_1)$}
The knot group of the trefoil has the presentations
\begin{align*}
\pi_1(S^3\backslash 3_1)=& \langle a,b\mid a^3=b^2\rangle, \\
\cong & \langle r,s \mid rsr=srs\rangle.
\end{align*}
In the first presentation, the meridian is given by $a^2b^{-1}$ and the longitude is given by $ba(b^{-1}a)^5$. In the second presentation, the meridian is $r$ and the longitude is $sr^2sr^{-4}$. The character scheme of $3_1$ is
\begin{align*}
\mathscr{X}(S^3\backslash 3_1)\cong \{(y-2)(x^2-y-1)=0\}\subset\mathbb{C}^2,
\end{align*}
where $x=\tr\rho(r)$ and $y=\tr(rs^{-1})$. The line $\{y=2\}$ is $\mathscr{X}_{\text{red}}$ and $\{x^2-y=1,y\neq 2\}$ is $\mathscr{X}_{\text{irr}}$. 

The fundamental group of the complement of the knot $3_1\#3_1$ (which is isomorphic to the knot group of $3_1\#3_1^*$) has the presentation
\begin{align*}
\Gamma=\langle a,b,c,d\mid a^3=b^2, c^3=d^2, d=ba^{-2}c^2\rangle,
\end{align*}
where the subgroup $\Gamma_0$ generated by $a$ and $b$ corresponds to a copy of $\pi_1(S^3\backslash 3_1)$ and similarly the subgroup $\Gamma_1$ generated by $c,d$ corresponds to the knot group of the other $3_1$ summand. The relation $a^2b^{-1}=c^2d^{-1}$ comes from setting the meridian in $\Gamma_0$ equal to the meridian in $\Gamma_1$. Consider the following closed subsets of $\mathscr{X}(\Gamma)$,
\begin{align*}
\mathscr{X}_{\operatorname{red}}= &\{[\rho]\mid \rho \text{ is abelian}\}\text{ and } \\
\mathscr{X}_{i}= &\{[\rho]\mid \rho\vert_{\Gamma_{|1-i|}} \text{ is abelian}\},
\end{align*}
where clearly $\mathscr{X}_{\operatorname{red}}\subset \mathscr{X}_{i}$. Since the abelianization of the knot group is generated by the meridian, we have that $\mathscr{X}_{\operatorname{red}}=\mathscr{X}(\mathbb{Z})\cong\mathbb{C}$, where the meridional trace is a coordinate for $\mathbb{C}$. 
\begin{lemma}
Let $\mathscr{X}(\Gamma)\overset{r}{\rightarrow} \mathscr{X}(\Gamma_{i})$ denote the natural restriction map. Then the composite $\mathscr{X}_i\hookrightarrow\mathscr{X}(\Gamma)\overset{r}{\rightarrow} \mathscr{X}(\Gamma_{i})$ is an isomorphism $\mathscr{X}_i\cong \mathscr{X}(\Gamma_{i})$.
\end{lemma}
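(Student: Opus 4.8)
The plan is to exploit the decomposition of $\Gamma$ as an amalgamated free product. Since $3_1\#3_1$ is a connected sum, its knot group is $\Gamma = \Gamma_0 *_{\mathbb{Z}} \Gamma_1$, where the amalgamating $\mathbb{Z}$ is generated by the meridian $\mu$, and the defining relation $d=ba^{-2}c^2$ is precisely the identification $a^2b^{-1}=c^2d^{-1}$ of the two meridians. By the universal property of amalgamated products, a representation $\rho\colon\Gamma\to\SLC$ is therefore the same data as a pair $(\rho_0,\rho_1)$ of representations of $\Gamma_0$ and $\Gamma_1$ satisfying the single compatibility $\rho_0(\mu)=\rho_1(\mu)$. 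First I would record this reformulation, so that the restriction map $r$ becomes $\rho\mapsto\rho_i$ and the condition defining $\mathscr{X}_i$ becomes the condition that $\rho_{|1-i|}$ be abelian.

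Next I would construct an explicit inverse to $r$. The key point is that an abelian representation of a trefoil group is determined by the image of its meridian: from $a^3=b^2$ one computes that the abelianization of $\Gamma_{|1-i|}$ is infinite cyclic, generated by $\mu$, with $a=\mu^2$, $b=\mu^3$ (and symmetrically for the other factor). Thus, given $\rho_i\in\mathscr{R}(\Gamma_i)$, I set $M=\rho_i(\mu)$ and define $\rho_{|1-i|}$ to be the abelian representation sending the meridian to $M$, explicitly $c\mapsto M^2$, $d\mapsto M^3$ (and $a\mapsto M^2$, $b\mapsto M^3$ when $i=1$). Since $\rho_i(\mu)=M=\rho_{|1-i|}(\mu)$ by construction, the pair $(\rho_0,\rho_1)$ glues to a representation of $\Gamma$ whose restriction to $\Gamma_{|1-i|}$ is abelian, so its character lies in $\mathscr{X}_i$. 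The formula $M\mapsto(M^2,M^3)$ is polynomial in the matrix entries, so this extension is a morphism of schemes, and restriction is manifestly a morphism as well.

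I would then verify that these two maps are mutually inverse on the representation scheme $\mathscr{R}_i:=\{\rho : \rho|_{\Gamma_{|1-i|}}\ \text{abelian}\}$: restriction is injective on $\mathscr{R}_i$ because two such representations restricting to the same $\rho_i$ share the same $M=\rho_i(\mu)$ and hence agree on the abelian factor too, and it is surjective by the extension just described. Both maps are $\SLC$-equivariant for the conjugation action, so they descend to mutually inverse morphisms of GIT quotients. Since $\mathscr{R}_i$ is a closed, conjugation-invariant subscheme of $\mathscr{R}(\Gamma)$ (cut out by the commutator relations $[\rho(c),\rho(d)]=I$ on the abelian factor), reductivity of $\SLC$ identifies $\mathscr{R}_i/\!/\SLC$ with the closed subscheme $\mathscr{X}_i\subset\mathscr{X}(\Gamma)$, while $\mathscr{R}(\Gamma_i)/\!/\SLC=\mathscr{X}(\Gamma_i)$; the induced morphisms then give the claimed isomorphism $\mathscr{X}_i\cong\mathscr{X}(\Gamma_i)$.

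The set-theoretic bijection and the equivariance are routine; the step requiring the most care is the passage from representation schemes to character schemes. I expect the main obstacle to be confirming that $\mathscr{X}_i$, as defined set-theoretically inside $\mathscr{X}(\Gamma)$, really coincides with $\mathscr{R}_i/\!/\SLC$ with its scheme structure, i.e.\ that taking $\SLC$-invariants is exact here so that the closed invariant subscheme $\mathscr{R}_i$ descends to a genuine closed subscheme of $\mathscr{X}(\Gamma)$, together with the checking that semisimplification does not enlarge the abelian locus (which holds for $\SLC$ since a reducible representation is conjugate into the upper-triangular group and its diagonalization still restricts abelianly). This is exactly where reductivity of $\SLC$, via the Reynolds operator, guarantees that the equivariant isomorphism upstairs descends to a scheme isomorphism downstairs rather than a mere bijection on points.
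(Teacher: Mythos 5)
Your proposal is correct and takes essentially the same route as the paper: injectivity because a representation whose restriction to $\Gamma_{|1-i|}$ is abelian is determined there by its value on the meridian, which lies in $\Gamma_0\cap\Gamma_1$ and hence is recorded by the restriction to $\Gamma_i$, and surjectivity by extending any representation of $\Gamma_i$ via the abelian representation of $\Gamma_{|1-i|}$ with the matching meridional value. The extra care you take in descending from representation schemes to GIT quotients (equivariance, reductivity, closedness of the abelian locus) goes beyond what the paper records, which contents itself with the set-theoretic bijection.
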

\begin{proof}
If $\rho\vert_{\Gamma_{|1-i|}}$ is abelian, then it is determined by its value on the meridian. But the value of $\rho$ on the meridian is determined by its restriction to $\Gamma_{i}$, since the meridian lies in the intersection $\Gamma_0\cap\Gamma_1$, establishing injectivity.

For surjectivity, we observe that for any representation $\rho\in \mathscr{X}(\Gamma_i)$, there exists an extension of $\rho$ to a representation of $\Gamma$ given by setting $\rho\vert_{\Gamma_{|1-i|}}$ to be the abelian representation of $\Gamma_{|1-i|}$ with the required meridional value. This lies in $\mathscr{X}_i$ by construction.
\end{proof}
Recall the following general fact:
\begin{lemma}\cite{CCGSL}\label{nars}
Let $\rho$ be a representation of $\pi_1(S^3\backslash K)$ with $[\rho]\in \mathscr{X}_{\operatorname{red}}\cap\overline{\mathscr{X}_{\operatorname{irr}}}$. Then the following equivalent conditions hold:
\begin{itemize}
\item $\Delta(\mu^2)=0$, where $\Delta$ is the Alexander polynomial of $K$ and $\mu$ is an eigenvalue of $\rho(m)$, for $m$ the meridian of the knot.
\item There exists a non-abelian reducible representation $\rho'$ with the same character as $\rho$.
\end{itemize}
\end{lemma}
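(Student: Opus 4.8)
The plan is to recognize both bullet points as consequences of the nonvanishing of a single twisted cohomology group, to prove their equivalence for \emph{any} reducible $\rho$, and then to use the hypothesis $[\rho]\in\overline{\mathscr{X}_{\text{irr}}}$ only to guarantee that they are actually realized. First I would reduce to a normal form. Since $[\rho]\in\mathscr{X}_{\text{red}}$, the character of $\rho$ coincides with that of a diagonal abelian representation, which factors through $H_1(S^3\setminus K;\mathbb{Z})\cong\mathbb{Z}$ and is pinned down by the eigenvalue $\mu$ of $\rho(m)$. Every representative of this character is then conjugate to an upper-triangular representation $g\mapsto\begin{pmatrix}\mu^{a(g)} & d(g)\\ 0 & \mu^{-a(g)}\end{pmatrix}$, where $a:\pi_1(S^3\setminus K)\to\mathbb{Z}$ is the abelianization sending the meridian $m$ to $1$.

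Next I would extract a cocycle condition. Imposing that $g\mapsto\rho(g)$ be a homomorphism, the upper-right entries satisfy $d(gh)=\mu^{a(g)}d(h)+\mu^{-a(h)}d(g)$, and the substitution $\tilde d(g)=\mu^{a(g)}d(g)$ turns this into $\tilde d(gh)=\mu^{2a(g)}\tilde d(h)+\tilde d(g)$; that is, $\tilde d$ is a $1$-cocycle for the action of $\pi_1(S^3\setminus K)$ on $\mathbb{C}$ through $g\mapsto\mu^{2a(g)}$. A direct computation shows that conjugating the diagonal representation by a unipotent upper-triangular matrix changes $\tilde d$ by a coboundary, and that the representation is conjugate to the abelian one exactly when $[\tilde d]=0$. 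Hence a non-abelian reducible representation sharing the character of $\rho$ exists if and only if $H^1\!\left(\pi_1(S^3\setminus K);\mathbb{C}_{\mu^2}\right)\neq 0$, where $\mathbb{C}_{\mu^2}$ is $\mathbb{C}$ with the $\pi_1$-action by $\mu^{2a(\cdot)}$.

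Then I would identify this twisted cohomology with the Alexander polynomial. Using a presentation of the knot group (or the CW structure of the complement) together with Fox calculus, $H^1(\pi_1;\mathbb{C}_{t})$ is computed from the Alexander module specialized at $t=\mu^2$; since the Alexander module is a torsion $\mathbb{Z}[t^{\pm1}]$-module whose order is $\Delta(t)$, the standard computation gives $\dim_{\mathbb{C}}H^1(\pi_1;\mathbb{C}_{\mu^2})>0$ precisely when $\mu^2$ is a root of $\Delta$. (The value $\mu^2=1$ causes no trouble, as $\Delta(1)=\pm1\neq 0$.) This establishes the equivalence of the two bullet points for every reducible $\rho$.

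Finally I would invoke the hypothesis $[\rho]\in\overline{\mathscr{X}_{\text{irr}}}$ to ensure these equivalent conditions genuinely hold: a reducible character that is a limit of irreducible characters cannot have only abelian representatives, so it must carry a non-abelian reducible lift, which gives the second bullet and hence the first. I expect this last step to be the main obstacle. The cocycle computation and the Fox-calculus identification are essentially formal, but justifying that a reducible point of $\overline{\mathscr{X}_{\text{irr}}}$ necessarily supports a non-abelian reducible representation requires the local deformation theory of the character variety at reducible characters, namely the way branches of irreducibles meet the abelian component. This is exactly the geometric input drawn from \cite{CCGSL}, and it is not a formal consequence of the preceding cohomological reduction.
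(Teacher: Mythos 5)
The paper never proves this lemma: it is stated as a recalled ``general fact'' and attributed wholesale to \cite{CCGSL}, so there is no internal argument to compare against. Your proposal reconstructs the standard proof lying behind that citation, and its structure is sound. The upper-triangular normal form, the cocycle identity for $\tilde d$, the identification of unipotent conjugation with coboundaries, and the Fox-calculus/Alexander-module computation of $H^1\bigl(\pi_1(S^3\setminus K);\mathbb{C}_{\mu^2}\bigr)$ constitute the classical Burde--de Rham equivalence of the two bullets, valid for an arbitrary reducible character. Your deferral of the realization step --- that a reducible character lying in $\overline{\mathscr{X}_{\operatorname{irr}}}$ must admit a non-abelian reducible representative, which in \cite{CCGSL} is proved by taking a curve of irreducible representations limiting onto the given character and conjugating the degenerating family so that the limit remains non-abelian --- is honest and correctly sourced; since the paper defers the \emph{entire} lemma to \cite{CCGSL}, you are proving strictly more than the paper does, and you have correctly isolated which part is formal and which part is genuinely geometric.

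One imprecision should be repaired. Your parenthetical dismissal of $\mu^2=1$ is not quite right: at that value the criterion ``a non-abelian reducible with the same character exists iff $H^1(\pi_1;\mathbb{C}_{\mu^2})\neq 0$'' is false rather than vacuous. Indeed $H^1(\pi_1;\mathbb{C}_1)=\operatorname{Hom}(H_1(S^3\setminus K);\mathbb{C})=\mathbb{C}\neq 0$ (this is the augmentation/Tor contribution to the twisted cohomology, unrelated to roots of $\Delta$), while every reducible representation with this character has both diagonal entries equal to $\pm1$, hence is of the form $\pm I+(\text{nilpotent upper triangular})$ and is therefore abelian; so no non-abelian reducible exists even though $H^1\neq 0$. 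Both bullets are false at $\mu^2=1$, so the equivalence you are after survives, but the middle link in your chain of ``iff''s breaks there. The fix is simply to state the $H^1$ criterion and the identification $H^1(\pi_1;\mathbb{C}_t)\neq 0\Leftrightarrow\Delta(t)=0$ only for $t\neq 1$, and to dispose of $\mu^2=1$ by the direct observation above.
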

The Alexander polynomial of the trefoil is the sixth cyclotomic polynomial, $\Delta_{3_1}(t)=t^2-t+1$. Thus, the above lemma guarantees non-abelian reducibles at meridional trace $\pm\sqrt{3}$. The same holds for $3_1\# 3_1$ since $\Delta_{3_1\# 3_1}=(\Delta_{3_1})^2$. This allows us to establish the following proposition:
\begin{proposition}\label{components}
Let $\mathscr{X}_{i,\operatorname{irr}}=\mathscr{X}_{i}\cap \mathscr{X}_{\operatorname{irr}}$ and let $S=\mathscr{X}(\Gamma)\backslash (\mathscr{X}_0\cup\mathscr{X}_1)$. Then the four irreducible components of $\mathscr{X}(\Gamma)$ are $\mathscr{X}_{\operatorname{red}}, \overline{\mathscr{X}}_{0,\operatorname{irr}}, \overline{\mathscr{X}}_{1,\operatorname{irr}}$ and $\overline{S}$. Moreover, these four components pairwise intersect in the same two points, corresponding to characters of non-abelian reducibles. 
\end{proposition}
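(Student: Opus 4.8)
The plan is to stratify $\mathscr{X}(\Gamma)$ by the isomorphism type of the two restrictions $\rho_0=\rho|_{\Gamma_0}$ and $\rho_1=\rho|_{\Gamma_1}$, each a character of a trefoil knot group. Since a trefoil representation is, on the character scheme, either abelian or irreducible, every point of $\mathscr{X}(\Gamma)$ falls into exactly one of four strata: both restrictions abelian; only $\rho_0$ irreducible; only $\rho_1$ irreducible; both irreducible. I claim the closures of these strata are $\mathscr{X}_{\operatorname{red}}$, $\overline{\mathscr{X}}_{0,\operatorname{irr}}$, $\overline{\mathscr{X}}_{1,\operatorname{irr}}$, and $\overline{S}$. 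The first three identifications are quick. If both restrictions are abelian then, as they share the meridian and the meridian generates each $H_1(\Gamma_i)$, the image of $\rho$ lies in a single cyclic subgroup, so $\rho$ is abelian and the stratum is $\mathscr{X}_{\operatorname{red}}\cong\mathbb{C}$. The strata with exactly one irreducible restriction are $\mathscr{X}_{0,\operatorname{irr}}$ and $\mathscr{X}_{1,\operatorname{irr}}$ by definition, and under the isomorphism $\mathscr{X}_i\cong\mathscr{X}(\Gamma_i)$ of the preceding lemma their closures are the irreducible trefoil curves $\{x^2-y=1\}$, hence one-dimensional and irreducible.

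The substantial step is the analysis of $S$, where both summands are irreducible. I would first record the fact that whenever one restriction $\rho_i$ is irreducible the full representation $\rho$ is irreducible, since an invariant line for $\rho(\Gamma)$ would be invariant for $\rho_i(\Gamma_i)$, which is impossible. To compute $\dim\overline{S}$ I fix the common meridian $M=\rho(a^2b^{-1})$ of trace $m$; for $m\neq\pm\sqrt{3}$ the irreducible trefoil character is unique, so $\rho_0$ and $\rho_1$ are each determined up to conjugacy, and after normalizing $\rho_0$ the residual freedom in $\rho_1$ is the orbit of the centralizer $Z(M)\cong\mathbb{C}^{*}$ of the regular semisimple element $M$. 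Thus $S$ fibers over the $m$-line with irreducible one-dimensional fibers, so $\overline{S}$ is irreducible of dimension $2$; equivalently, writing $\Gamma=\Gamma_0*_{\mathbb{Z}}\Gamma_1$, the scheme $\mathscr{R}_{\operatorname{irr}}(\Gamma)$ is the fiber product $\mathscr{R}_{\operatorname{irr}}(\Gamma_0)\times_{\SLC}\mathscr{R}_{\operatorname{irr}}(\Gamma_1)$ of dimension $4+4-3=5$, giving $\dim\overline{S}=5-3=2$ after the conjugation quotient. Since $\overline{S}$ is two-dimensional it is not contained in the other three closures, and since the interior of each one-dimensional stratum consists of characters of a reducibility type not shared by the others, no one of the four closures lies in the union of the rest. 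Together with irreducibility this identifies the four closures as exactly the irreducible components of $\mathscr{X}(\Gamma)$.

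For the intersection statement I first locate the reducible degenerations. As $\Delta_{3_1}(t)=t^2-t+1$ and $\Delta_{3_1\#3_1}=\Delta_{3_1}^2$, Lemma \ref{nars} shows that the non-abelian reducible characters are exactly those with meridian eigenvalue $\mu$ satisfying $\mu^2\in\{e^{\pm i\pi/3}\}$, i.e. meridian trace $m=\pm\sqrt{3}$; these are two distinct characters $P_{\pm}$, each the unique reducible character at its trace. The interiors of $\overline{\mathscr{X}}_{0,\operatorname{irr}}$, $\overline{\mathscr{X}}_{1,\operatorname{irr}}$ and $\overline{S}$ consist of irreducible characters, hence are disjoint from $\mathscr{X}_{\operatorname{red}}$, and any two of these three interiors carry incompatible reducibility types; so all six pairwise intersections are concentrated on boundaries. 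Because the summand characters along $\overline{S}$ (and along each curve) depend continuously on $m$ alone and are reducible only at $m=\pm\sqrt{3}$, any point shared by two of the four components must have $m=\pm\sqrt{3}$. Over these two trace values $\mathscr{X}_{\operatorname{red}}$ and the two trefoil curves contain only $P_{\pm}$, so every pairwise intersection is a subset of $\{P_{+},P_{-}\}$, and the boundary of each trefoil irreducible curve is exactly $\{P_{\pm}\}$.

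The main obstacle is to show that $\overline{S}$ genuinely contains both $P_{\pm}$, i.e. that the two-dimensional family does limit to these reducible characters rather than escaping to infinity in trace coordinates as $m\to\pm\sqrt{3}$; this is what forces all six intersections to be the \emph{same} two points. I would settle it by letting $m$ tend to $\pm\sqrt{3}$ along the fibers constructed above: the summand characters limit to the reducible trefoil character at $\pm\sqrt{3}$, the limiting representation is reducible because one summand becomes reducible, and its character is therefore the unique reducible character $P_{\pm}$ at that trace. Making this limit rigorous—verifying that the relevant cross-traces remain bounded and converge to their values at $P_{\pm}$—is the delicate point, and is most cleanly handled with an explicit trace-coordinate model of $S$ in the variables $m$ and a single cross-trace such as $\tr\rho(ac)$.
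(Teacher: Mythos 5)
Your identification of the three one-dimensional components and your argument that pairwise intersections can only occur at meridian trace $\pm\sqrt{3}$ track the paper's proof (both rest on Lemma \ref{nars} and the plane-curve model of $\mathscr{X}(3_1)$), but your treatment of $\overline{S}$ --- which is where the real content lies --- has three genuine gaps. First, $S=\mathscr{X}(\Gamma)\backslash(\mathscr{X}_0\cup\mathscr{X}_1)$ is \emph{not} the locus where both restrictions are irreducible: it also contains irreducible characters of $\Gamma$ whose restrictions to both $\Gamma_i$ are non-abelian \emph{reducible}; these occur at meridian trace $\pm\sqrt{3}$ and form two one-parameter families (cf.\ Proposition \ref{compositechar}). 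So your four strata are not exhaustive, your fibration misses part of $S$, and the same oversight makes the identification of $\mathscr{R}_{\operatorname{irr}}(\Gamma)$ with $\mathscr{R}_{\operatorname{irr}}(\Gamma_0)\times_{\SLC}\mathscr{R}_{\operatorname{irr}}(\Gamma_1)$ false. Second, even on the locus you do fiber, the inference ``surjection onto an irreducible base with irreducible equidimensional fibers implies irreducible total space'' is invalid without properness or local triviality: the union of the hyperbola $\{xy=1\}$ with the origin surjects onto the $x$-line with every fiber a single reduced point, yet is reducible. Your fibration is visibly non-proper (the fibers are $\mathbb{C}^*$-orbits), so this step requires an actual trivialization, e.g.\ explicit matrix representatives over the double cover $\mu\mapsto m=\mu+\mu^{-1}$; note also that the fibers over $m=\pm 2$ are $\mathbb{C}$ rather than $\mathbb{C}^*$, since there the meridian matrix is parabolic, not regular semisimple.

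Third, the point you yourself flag --- that $P_{\pm}$ actually lie in $\overline{S}$ --- is precisely what your limit sketch does not deliver: because the fibration is not proper, one must exhibit a specific family in $S$ along which the cross-traces stay bounded as $m\to\pm\sqrt{3}$, and nothing in your setup produces one. The paper (Proposition \ref{cubic}) resolves all three issues at once: for $[\rho]\in S$ the central-element relations force $\tr(A)=\tr(C)=1$, $\tr(B)=\tr(D)=0$ and $D=BAC^{-1}$, which exhibits $\overline{S}$ as a relative character variety of the four-holed sphere, namely the affine cubic $x^2+y^2+z^2+xyz-z-2=0$ in the cross-trace coordinates $x=\tr(AB)$, $y=\tr(B^{-1}C)$, $z=\tr(A^{-1}C)$. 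Irreducibility of this cubic is a direct check (a linear factor would have to be $x+c$, $y+c$, or $z+c$ up to scale, which the quadratic terms forbid), the composites of two non-abelian reducibles are automatically included, and the reducible characters appear as the two points $(\pm\sqrt{3},\mp\sqrt{3},2)$ on the cubic --- exactly the containment $P_{\pm}\in\overline{S}$ you could not complete. Your own suggested repair (an explicit model in $m$ and one cross-trace) is in spirit this computation, so I would carry it out in the paper's trace coordinates rather than trying to patch the fibration argument.
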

\begin{proof}
That none of the four closed sets share any irreducible components follows from the description of the intersections. If $[\rho]\in \overline{S}\cap \overline{\mathscr{X}}_{0,\operatorname{irr}}$, then by restricting to $\mathscr{X}(\Gamma_{1})$, we see that $[\rho\vert_{\Gamma_1}]\in \mathscr{X}_{\operatorname{red}}(\Gamma_1)\cap\overline{\mathscr{X}_{\operatorname{irr}}(\Gamma_1)}$. Thus, Lemma \ref{nars} implies that $[\rho]$ is one of two points in $\mathscr{X}_{\operatorname{red}}$ corresponding to non-abelian reducibles. The other intersections follow similarly.

It only remains to check that each of the four pieces is in fact irreducible. From the coordinate description of $\mathscr{X}(S^3\backslash 3_1)$, we see that $\mathscr{X}_{\operatorname{red}}=\{y=2\}$ and the $\overline{\mathscr{X}}_{i,\operatorname{irr}}$ are equal to $\{x^2-y=1\}$. In either case, they are isomorphic to $\mathbb{C}$. The irreducibility of $\overline{S}$ follows from Proposition \ref{cubic} below.
\end{proof}

\begin{proposition}\label{cubic}
$\overline{S}$ is an affine cubic surface with precisely two $A_1$ singularities at the points $S_{\operatorname{sing}}=\overline{S}\backslash S=\mathscr{X}_{\text{nar}}$, the two characters of non-abelian reducible representations.
\end{proposition}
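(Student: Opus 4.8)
The plan is to produce explicit trace coordinates in which $\overline{S}$ is the zero locus of a single polynomial, read off that this polynomial is a cubic, and then analyze its singularities by the Jacobian criterion. A character in $S$ restricts to an irreducible representation on each factor $\Gamma_0=\langle a,b\rangle$ and $\Gamma_1=\langle c,d\rangle$, so I would work with the generating set $\{m,s_0,s_1\}$ of $\Gamma$, where $m$ is the common meridian and $s_0,s_1$ are the second meridional generators of the two trefoil factors; all three are conjugate, hence share the trace $x=\tr\rho(m)$. The coordinate description of $\mathscr{X}(S^3\backslash 3_1)$ (in which the irreducible locus is $\{x^2-y-1=0\}$ with $y=\tr\rho(rs^{-1})$) gives, for each irreducible factor, the relation $\tr\rho(ms_0)=\tr\rho(ms_1)=1$. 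This is the key input: it collapses the seven generating traces of the free group on $m,s_0,s_1$ to the three coordinates $x$, $w=\tr\rho(s_0s_1^{-1})$, and $v=\tr\rho(ms_0s_1)$, the remaining traces being equal to $x$ or to $1$.

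The single relation among $x,w,v$ then comes from the fundamental trace identity that expresses $\tr(ABC)$ as a root of a monic quadratic $t^2-pt+q$ whose coefficients are polynomials in the traces of $A,B,C$ and their pairwise products. Substituting $A=\rho(m),B=\rho(s_0),C=\rho(s_1)$ and the values above, I expect to find that the \emph{a priori} degree-four monomial $x^3v$ cancels precisely because we chose $w=\tr\rho(s_0s_1^{-1})$ rather than $\tr\rho(s_0s_1)$; concretely $p=2x-xw$ and $q=w^2-x^2w+2x^2-w-2$, so that
\begin{equation*}
F(x,w,v)=v^2-(2x-xw)v+(w^2-x^2w+2x^2-w-2)
\end{equation*}
is a genuine cubic. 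I would then argue that $(x,w,v)$ identify $\overline{S}$ with the affine cubic surface $V(F)\subset\mathbb{C}^3$: the map is injective on the dense locus of irreducible characters (where a character is determined by its traces), both source and target are irreducible surfaces (irreducibility of $\overline{S}$ being Proposition \ref{components}), and on the irreducible locus the matrix-level trefoil relations are equivalent to the two trace conditions, so that no further equations are needed.

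It then remains to compute the singular locus of $V(F)$. Solving $\partial_xF=\partial_wF=\partial_vF=0$ together with $F=0$, I expect exactly the two solutions $(x,w,v)=(\pm\sqrt{3},2,0)$, the degenerate cases $x=0$ and $x^2=4$ giving no points on the surface. These coincide with $\mathscr{X}_{\text{nar}}$: at a non-abelian reducible the meridional eigenvalue $\mu$ satisfies $\mu+\mu^{-1}=\pm\sqrt{3}$, the values at which $\Delta_{3_1}(\mu^2)=0$ by Lemma \ref{nars}, and a direct upper-triangular computation gives $\tr\rho(s_0s_1^{-1})=2$ and $\tr\rho(ms_0s_1)=\mu^3+\mu^{-3}=(\mu+\mu^{-1})^3-3(\mu+\mu^{-1})=0$. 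Since Proposition \ref{components} already identifies $\overline{S}\backslash S$ with these same two characters, this shows $S_{\operatorname{sing}}=\overline{S}\backslash S=\mathscr{X}_{\text{nar}}$. To pin down the analytic type I would compute the Hessian of $F$ at $(\sqrt{3},2,0)$ and check it has rank three (nonvanishing determinant), so that the singularity is an ordinary double point $A_1$; the involution $(x,v)\mapsto(-x,-v)$ preserves $F$ and exchanges the two points, so the same conclusion holds at $(-\sqrt{3},2,0)$.

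The main obstacle is not the singularity count but the identification $\overline{S}\cong V(F)$: one must verify that the three trace functions genuinely generate the coordinate ring of the component and that, on the irreducible locus, the defining matrix relations $ms_0m=s_0ms_0$ and $ms_1m=s_1ms_1$ impose nothing beyond $\tr\rho(ms_0)=\tr\rho(ms_1)=1$. Once this is secured, the cubicity of $F$ and the Jacobian and Hessian computations are routine, and they simultaneously determine the number, the location, and the $A_1$ type of the singular points.
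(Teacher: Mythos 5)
Your computations are correct: with $\tr\rho(s_0s_1)=x^2-w$, the rank-three Fricke identity does give $p=2x-xw$ and $q=w^2-x^2w+2x^2-w-2$; the only critical points of $F$ lying on $V(F)$ are $(\pm\sqrt3,2,0)$ (the branches $x=0$ and $w=-2$ of the Jacobian system miss the surface); and the Hessian of $F$ at $(\pm\sqrt3,2,0)$ has determinant $-24\neq 0$, so these are ordinary double points, i.e.\ $A_1$. Your route is parallel in spirit to the paper's but genuinely different in coordinates. The paper works with the presentation $\langle a,b,c,d\mid a^3=b^2,\ c^3=d^2,\ d=ba^{-2}c^2\rangle$: on $S$ the central elements must go to $-I$, which translates into $\tr A=\tr C=1$, $\tr B=\tr(BAC^{-1})=0$, and $\overline{S}$ is then the Fricke--Klein relative character variety of the four-punctured sphere, the classical cubic $x^2+y^2+z^2+xyz-z-2=0$, quoted from the literature. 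The advantage of that presentation is that the converse implication (traces $\Rightarrow$ group relations) is unconditional, by Cayley--Hamilton: $\tr A=1$ forces $A^3=-I$ and $\tr B=0$ forces $B^2=-I$, with no irreducibility hypothesis. In your meridional presentation the analogous converse --- that $\tr\rho(ms_i)=1$ implies the braid relations --- holds only when the pairs $(\rho(m),\rho(s_i))$ are irreducible, which fails exactly on the slice $x=\pm\sqrt3$ (the commutator trace is $x^2-1$); this is why your identification needs a density/closure step. A related minor imprecision: points of $S$ need not restrict to \emph{irreducibles} on the two factors --- over meridional trace $\pm\sqrt3$ the restrictions are non-abelian reducibles --- but since those characters also lie on $x^2-y-1=0$, the conditions $\tr\rho(ms_i)=1$ still hold there, so nothing breaks.

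The one genuine gap is a circularity: you cite Proposition \ref{components} for the irreducibility of $\overline{S}$, but in the paper the irreducibility of $\overline{S}$ asserted in Proposition \ref{components} is itself deduced from Proposition \ref{cubic}, so it is not available to you here. The fix is cheap and stays in your coordinates: prove instead that $F$ is irreducible and run the closure argument through $V(F)$. Since $F=v^2-pv+q$ is monic quadratic in $v$ over $\mathbb{C}[x,w]$, it is reducible only if its discriminant $p^2-4q=(w-2)\bigl(x^2(w+2)-4(w+1)\bigr)$ is a square, which it is not (the factor $w-2$ occurs to the first power). Then argue: $S$ is open in $\mathscr{X}(\Gamma)$ and its image under $(x,w,v)$ lands in $V(F)$; conversely every point of $V(F)$ with $x\neq\pm\sqrt3$ is the character of a representation of $F_3$ whose two pairs are irreducible, hence satisfies the braid relations and defines a point of $S$; since $\mathscr{X}(\Gamma)$ is closed in $\mathscr{X}(F_3)$ and $V(F)$ is irreducible, the exceptional slice is swept up in the closure and $\overline{S}\cong V(F)$. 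Note also that both the injectivity you use (``a character is determined by its traces'') and the surjectivity implicit in ``no further equations are needed'' are precisely Vogt's theorem for rank-three free groups (the seven traces embed $\mathscr{X}(F_3)$ into $\mathbb{C}^7$ with image exactly the Fricke hypersurface); this should be cited explicitly, as it plays the role that the reference to the Fricke--Klein computation plays in the paper.
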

\begin{proof}
Let $A=\rho(a)$, $B=\rho(b)$, etc. If $[\rho]\in S$, then $\rho\vert_{\Gamma_i}$ is non-abelian. But since $a^3=b^2$ is a central element of $\Gamma_0$, we must have $A^3=B^2=\pm I$. However, if $B^2=I$, then $B=\pm I$ and $\rho\vert_{\Gamma_0}$ would be abelian. Thus, we must have $A^3=B^2=-I$, and similarly $C^3=D^2=-I$ and $A,C\neq -I$. These equations are equivalent to $\operatorname{tr}(A)=\operatorname{tr}(C)=1$ and $\operatorname{tr}(B)=\operatorname{tr}(D)=0$. Now, since $d=ba^{-2}c^2$, we see that $D=BAC^{-1}$. Thus, we have the inclusion
\begin{align*}
S\subset\mathscr{S}=\{[\rho]\in\mathscr{X}(F_3)\mid \operatorname{tr}(A)=\operatorname{tr}(C)=1, \operatorname{tr}(B)=\operatorname{tr}(BAC^{-1})=0\},
\end{align*}
where $F_3$ is the free group generated by $a,b,c$. Also, any representation of $F_3$ that lies in $\mathscr{S}$ is a representation of $\Gamma$, so that $\mathscr{S}\subset\mathscr{X}(\Gamma)$. Since $S$ is open in $\mathscr{X}(\Gamma)$, $\overline{S}$ is a union of the irreducible components meeting $S$. Thus, $\overline{S}=\mathscr{S}$ provided $\mathscr{S}$ is irreducible.

So, we now turn to describing the algebraic set $\mathscr{S}$. Regarding $\mathscr{X}(F_3)$ as the character variety of the four-holed sphere, we see that $\mathscr{S}$ is a relative character variety; $\mathscr{S}$ is the locus of characters of $\pi_1(S^2-\{p_0,p_2,p_3,p_4\})$ with fixed traces along the four boundary circles. This relative character variety can be computed \cite{frickeklein} to be the affine cubic hypersurface in $\mathbb{C}^3$ given by the equation
\begin{align*}
f=x^2+y^2+z^2+xyz-z-2=0,
\end{align*}
where $x=\operatorname{tr}(AB), y=\operatorname{tr}(B^{-1}C)$ and $z=\operatorname{tr}(A^{-1}C)$. Furthermore, the reducible representations, which are the points in $\overline{S}\backslash S$, correspond to $(x,y,z)=(\pm \sqrt{3}, \mp\sqrt{3}, 2)$. These are precisely the singular points of the affine cubic surface $\overline{S}$. Since the Tjurina number, $\dim\widehat{\mathcal{O}}_{(x,y,z)}/(f,\partial_x f,\partial_y f,\partial_z f)$, is equal to 1 at the singularities, they are $A_1$ singularities.
\end{proof}
We record a calculation of the singular cohomology groups of $S$ for use in Section 7.
\begin{proposition}\label{cohomology}
The singular cohomology groups of $S$ are
\begin{equation*}
    H^*(S;\mathbb{Z})=
    \begin{cases}
	\mathbb{Z} & i=0,\ \\
	0 & i=1,\ \\
	\mathbb{Z}^2 & i=2,\ \\
	\mathbb{Z}^4 & i=3,\ \\
	0 & i\geq 4.\ 
    \end{cases}
\end{equation*}
\end{proposition}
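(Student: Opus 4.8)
The plan is to use the explicit equation for $\overline{S}$ furnished by Proposition \ref{cubic} together with the identification $S=\overline{S}\setminus\mathscr{X}_{\text{nar}}$, so that $S$ is the smooth locus of the affine cubic $\{f=0\}$ with its two $A_1$ points deleted. I would organize the computation around the projection $\pi\colon\overline{S}\to\mathbb{C}$, $(x,y,z)\mapsto z$. Over $z=c$ the fibre is the affine conic $x^2+cxy+y^2+(c^2-c-2)=0$, whose projectivisation meets the line at infinity in the two roots of $x^2+cxy+y^2$; hence the generic fibre is $\mathbb{P}^1$ minus two points, i.e. $\mathbb{C}^{*}$, and the family degenerates exactly over the roots of $(c^2-c-2)(4-c^2)=0$. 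The key geometric input is that both deleted points lie over a single such value, where the fibre of $S$ is a line with two punctures; the remaining degenerate fibres are a pair of disjoint lines and a nodal pair of crossing lines.

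First I would pass to the projective closure $\bar X\subset\mathbb{P}^3$ and its minimal resolution $Z$, a smooth rational surface whose cohomology is standard, and identify $S$ with $Z$ minus two pieces: the curve at infinity $\bar X\cap\{w=0\}=\{xyz=0\}$, which is a triangle of three $\mathbb{P}^1$'s, and the two exceptional $(-2)$-curves lying over the nodes. I would then peel these off using the long exact sequences of the relevant pairs (equivalently, Mayer--Vietoris with tubular/cone neighbourhoods): removing the triangle uses $H^{*}$ of a cycle of three lines, while deleting the two $(-2)$-curves is exactly the operation of removing the two $A_1$ points from $\overline{S}$, and the link of each such point is the lens space $\mathbb{RP}^3=S^3/\{\pm1\}$. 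Rationally $\mathbb{RP}^3\sim S^3$, so over $\mathbb{Q}$ the contribution of the deleted points to $H^{3}$ is transparent; this is what controls the top nonzero group $H^{3}(S)$.

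The main obstacle is twofold. At the $H^{2}$ level everything reduces to the ranks of the restriction maps $H^{2}(Z)\to H^{2}(\text{boundary curves})$, i.e. to understanding the classes in $\operatorname{Pic}(Z)$ of the three lines at infinity and of the two exceptional curves; pinning these ranks down is what fixes $H^{2}(S)$. The second, and genuinely delicate, point is the passage from rational to integral coefficients: because the links of the deleted points are $\mathbb{RP}^3$ rather than $S^3$, one must track the $2$-torsion in the Mayer--Vietoris sequences carefully, and here I would lean on the orientability of $S$ together with Poincar\'e duality $H^{k}(S)\cong H^{4-k}_{c}(S)$ for the smooth open $4$-manifold $S$, which lets me transfer the compactly supported computation on $Z$ minus its boundary divisor to the asserted groups. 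Throughout, the multiplicativity of the compactly supported Euler characteristic along the conic fibration $\pi$ (generic fibre $\mathbb{C}^{*}$, with the finitely many degenerate fibres above) gives an independent consistency check on the Betti numbers.
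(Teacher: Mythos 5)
Your plan is methodologically sound, and it is in essence a desingularized variant of the paper's own argument: the paper keeps the two $A_1$ points and works with the singular projective closure $Q$ (computing $H_*(Q)=(\mathbb{Z},0,\mathbb{Z}^5,0,\mathbb{Z})$ via Dimca's theorem on nodal hypersurfaces, then using Poincar\'e duality on $Q_{\operatorname{sm}}$ and a Mayer--Vietoris argument to remove the triangle at infinity), whereas you pass to the minimal resolution $Z$ (a weak del Pezzo cubic, $H^*(Z)=(\mathbb{Z},0,\mathbb{Z}^7,0,\mathbb{Z})$) and remove the triangle together with the two exceptional $(-2)$-curves. Your two key inputs --- the ranks of the restriction maps $H^2(Z)\to H^2(\text{removed curves})$, and torsion bookkeeping around the $\mathbb{RP}^3$ links --- are exactly where the content lies.

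However, you should be aware that your plan, carried out correctly, does not prove the proposition as stated; it contradicts it. The five removed curves are pairwise-intersection-theoretically independent: the triangle has Gram matrix with $-1$ on the diagonal and $1$ off the diagonal (determinant $4$), the two $(-2)$-curves give $\operatorname{diag}(-2,-2)$, and the two blocks are orthogonal since the nodes lie in the affine part. Hence $H^2(Z)=\mathbb{Z}^7\to H^2(A)=\mathbb{Z}^5$ has rank $5$, and the compactly supported sequence for $S=Z\setminus A$ gives $H_c^*(S)=(0,\mathbb{Z}^2,\mathbb{Z}^3,\text{finite},\mathbb{Z})$; Poincar\'e duality and universal coefficients then yield Betti numbers $b_*(S)=(1,0,3,2,0)$, not $(1,0,2,4,0)$. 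Your own proposed consistency check detects this immediately: by additivity and multiplicativity of $\chi_c$ along the conic fibration, $\chi(S)=(1-3)\cdot 0+1+2+1-2=2$ (equivalently $\chi(Z)-\chi(\text{triangle})-2\chi(\mathbb{P}^1)=9-3-4=2$), while the stated groups have $\chi=1+2-4=-1$. Note also that your remark that the deleted $A_1$ points control the top group already forces $b_3(S)=2$, one class per $\mathbb{RP}^3$ link, rather than $4$.

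The discrepancy originates in the paper's Mayer--Vietoris step: it asserts that $\partial N(Q_\infty)$ is a ``necklace of three copies of $S^3$.'' That would be correct if the three $(-1)$-lines were disjoint, but they meet pairwise, so $N(Q_\infty)$ is a plumbing on a cycle graph; its boundary is a closed graph $3$-manifold with $H_*=(\mathbb{Z},\,\mathbb{Z}\oplus(\mathbb{Z}/2)^2,\,\mathbb{Z},\,\mathbb{Z})$, in particular $b_1=1$ and $b_3=1$ rather than $b_1=1$ and $b_3=3$; feeding the necklace homology into the sequence is precisely what produces the ranks $2$ and $4$. Your approach avoids this trap \emph{provided} you use the long exact sequences of pairs (equivalently compactly supported cohomology), which need only $H^*$ of the removed curves and the restriction maps; if you instead run Mayer--Vietoris with a tubular neighbourhood of the triangle, you must use the plumbing boundary above, not three copies of $S^3$. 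So keep your route, run your Euler-characteristic check, and expect the corrected statement $H^*(S)\cong(\mathbb{Z},\,0,\,\mathbb{Z}^3\ (\text{plus possible }2\text{-torsion}),\,\mathbb{Z}^2,\,0)$; the correction also propagates to the $p/q=12$ and $p=0$ cases of Theorems \ref{HPgranny} and \ref{HPsquare}, which rely on this proposition.
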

\begin{proof}
Let $Q$ denote the projective closure of $\overline{S}$ inside of $\mathbb{P}^3$. One can check that $Q$ is smooth at infinity, meaning that $Q_{\operatorname{sm}}$, the smooth locus of $Q$, is the complement of the two singularities at $\overline{S}\backslash S$. By Theorem 4.3 in \cite{dimcasingularities}, the homology groups of $Q$ are
\begin{equation*}
    H_*(Q;\mathbb{Z})=
    \begin{cases}
	\mathbb{Z} & i=0,\ \\
	0 & i=1,\ \\
	\mathbb{Z}^5 & i=2,\ \\
	0 & i=3,\ \\
	\mathbb{Z} & i=4.\ 
    \end{cases}
\end{equation*}
By Poincar\' {e} duality, 
\begin{align*}
H_n(Q_{\operatorname{sm}};\mathbb{Z})\cong H_c^{4-n}(Q_{\operatorname{sm}};\mathbb{Z}).
\end{align*}
We can equate the compactly supported cohomology with a relative cohomology group, 
\begin{align*}
H_c^{n}(Q_{\operatorname{sm}};\mathbb{Z})\cong H^{n}(Q,Q_{\operatorname{sing}};\mathbb{Z}),
\end{align*}
which can be determined from the long exact sequence
\begin{align*}
\dots \to H^n(Q,Q_{\operatorname{sing}};\mathbb{Z})\to  H^n(Q;\mathbb{Z})\to  H^n(Q_{\operatorname{sing}};\mathbb{Z})\to\dots
\end{align*}
In particular, since $Q_{\operatorname{sing}}$ is zero-dimensional, we see that $H_n(Q_{\operatorname{sm}};\mathbb{Z})\cong H^{4-n}(Q;\mathbb{Z})$ for $n\leq 2$. And
\begin{align*}
\operatorname{rk} H_3(Q_{\operatorname{sm}};\mathbb{Z})= & \operatorname{rk} H^1(Q,Q_{\operatorname{sing}};\mathbb{Z}),\\
=& \operatorname{rk} H^1(Q;\mathbb{Z})+|Q_{\operatorname{sing}}|-1.
\end{align*}
So, the homology groups of $Q_{\operatorname{sm}}$ are
\begin{equation*}
    H_*(Q_{\operatorname{sm}}; \mathbb{Z})=
    \begin{cases}
	\mathbb{Z} & i=0,\ \\
	0 & i=1,\ \\
	\mathbb{Z}^5 & i=2,\ \\
	\mathbb{Z} & i=3,\ \\
	0 & i\geq 4.\ 
    \end{cases}
\end{equation*}
Let $Q_{\infty}=Q\backslash \overline{S}$. Then $S=Q_{\operatorname{sm}}\backslash Q_{\infty}$. We have $Q_{\infty}=\{xyz=0\}\subset\mathbb{P}^2$, which is a triangular arrangement of three lines. The normal bundle of each of these three copies of $\mathbb{P}^1$ has degree $-1$. So, a neighborhood of each sphere inside of $S$ is diffeomorphic to the $D^2$ bundle over $S^2$ with Euler number $-1$. The boundary of this neighborhood is diffeomorphic to $S^3$. Hence, the boundary of a neighborhood of $Q_{\infty}$, $\partial N(Q_{\infty})$, is a necklace of three copies of $S^3$. We can then apply the Mayer-Vietoris sequence
\begin{align*}
\dots \longrightarrow H_*(\partial N(Q_{\infty})) \longrightarrow H_*(Q_{\infty})\oplus H_*(S) \longrightarrow H_*(Q_{\operatorname{sm}}) \longrightarrow \dots 
\end{align*}
to compute the stated cohomology groups.
\end{proof}
\end{section}

\begin{section}{The A-polynomials of the square and granny knots}
We wish to describe the image of the natural map $r: \mathscr{X}(\Gamma)\to \mathscr{X}(\partial(S^3\backslash (3_1 \# 3_1^\circ)))$ given by restriction to the boundary torus. We use the notation $3_1 \# 3_1^\circ$ to collectively refer to either $3_1 \# 3_1$ or $3_1 \# 3_1^*$. Coordinates on $\mathscr{X}(\partial S^3\backslash N(3_1 \# 3_1^\circ))=\mathscr{X}(T^2)$ are given by the traces of the meridian and longitude. One may consider the double branched cover $d:\mathbb{C}^*\times\mathbb{C}^*\to \mathscr{X}(T^2)$ where the coordinates on the cover are given by the eigenvalues of the meridian and longitude, $M$ and $L$. The definining polynomial for the closure of the pull-back of the image of $r$ to $\mathbb{C}^*\times\mathbb{C}^*$ is called the A-polynomial \cite{CCGSL}.

For the right-handed trefoil, the A-polynomial is $M^{-6}+L=0$, whereas for the left-handed trefoil it is $M^6+L=0$ \cite{CCGSL}. These equations define the image under $r$ of the components $\overline{\mathscr{X}_{i,irr}}$. $\mathscr{X}_{\text{red}}$ is mapped to the line $L=1$. 

\begin{lemma}\label{compapoly}
Let $S$ be as in Proposition \ref{components}. Then the defining equation of the algebraic set $d^{-1}(\overline{r(S)})$ in eigenvalue coordinates  is $L-M^{-12}=0$ for the granny knot (the composite of two right-handed trefoils) and $L=1$ for the square knot (the composite of oppositely oriented trefoils).
\end{lemma}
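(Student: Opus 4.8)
The plan is to exploit the peripheral structure of the connected sum together with the trefoil $A$-polynomials recalled above. The starting observation is that on $S$ both restrictions $\rho\vert_{\Gamma_0}$ and $\rho\vert_{\Gamma_1}$ are irreducible, since this is exactly how $S$ was singled out in Proposition \ref{components} (it is the locus where neither restriction is abelian). Consequently each restriction lands on the nonabelian component $\overline{\mathscr{X}_{i,\operatorname{irr}}}$ of the corresponding trefoil summand, and so the restriction of $\rho$ to each summand's peripheral torus satisfies that trefoil's $A$-polynomial relation, namely $L_i=-M^{-6}$ for a right-handed summand and $L_i=-M^{6}$ for a left-handed summand, where $M$ is the meridional eigenvalue and $L_i$ the corresponding longitudinal eigenvalue.

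Next I would identify the longitude of $3_1\#3_1^\circ$. Because the two knot groups are amalgamated along the common meridian $m=a^2b^{-1}=c^2d^{-1}$, the preferred (null-homologous) longitude of the composite is the product $\lambda=\lambda_0\lambda_1$ of the preferred longitudes of the two summands, coming from boundary-connect-summing the two Seifert surfaces. Crucially, each $\lambda_j$ lies in the peripheral subgroup of its summand and therefore commutes with $m$.

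I would then work at a generic character of $S$, where $\rho(m)$ is regular with eigenvalues $M,M^{-1}$ for some $M\neq\pm1$; such characters are dense since the meridional trace $\tr(A^2B^{-1})$ is nonconstant on $S$ (otherwise $r(S)$ would be a point rather than a curve). Diagonalizing $\rho(m)=\operatorname{diag}(M,M^{-1})$, the matrices $\rho(\lambda_0)$ and $\rho(\lambda_1)$, commuting with $\rho(m)$, are simultaneously diagonal, say $\rho(\lambda_j)=\operatorname{diag}(L_j,L_j^{-1})$. Hence $\rho(\lambda)=\operatorname{diag}(L_0L_1,(L_0L_1)^{-1})$, so that $L=L_0L_1$ on the shared eigenvector. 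Substituting the trefoil values, for the granny knot both summands are right-handed, giving
\[
L=(-M^{-6})(-M^{-6})=M^{-12},
\]
i.e. $L-M^{-12}=0$; for the square knot the summands have opposite handedness, giving $L=(-M^{-6})(-M^{6})=1$, i.e. $L=1$. Taking the closure over all of $S$ then yields the defining equations claimed.

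The step requiring the most care is the eigenvalue bookkeeping: one must ensure that $L_0$ and $L_1$ are read off the \emph{same} eigenvector of the shared meridian, which is precisely what simultaneous diagonalization guarantees once $M\neq\pm1$, and that passing to $d^{-1}(\overline{r(S)})$ neither adds nor drops components. Here it helps that the relation $L=L_0L_1$ already cuts out an irreducible plane curve in $\mathbb{C}^*\times\mathbb{C}^*$, matching the expected dimension of the $A$-polynomial locus. A secondary check is that the preferred framings of the two summand longitudes multiply to the preferred framing of the composite, so that no spurious power of $M$ is introduced; this is the additivity of Seifert framings under connected sum.
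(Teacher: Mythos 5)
Your proposal is correct and follows essentially the same route as the paper's proof: identify the composite longitude as $\ell=\ell_0\ell_1$, use that on $S$ both restrictions land on the trefoil components where $\rho(\ell_i)=-\rho(m)^{\mp 6}$, and multiply to get $\rho(\ell)=\rho(m)^{-12}$ (granny) or $\rho(\ell)=I$ (square), then pass to eigenvalue coordinates. The only difference is presentational: the paper reads the eigenvalue equation directly off the matrix identity (using that each $\ell_i$ commutes with the non-central $\rho(m)$), whereas you make the same point via simultaneous diagonalization at generic characters and a closure argument, which is equivalent.
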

\begin{proof}
Let $\ell_i$ denote the longitude of the $i$-th summand of $3_1\# 3_1^\circ$. Then, the longitude of $3_1\# 3_1^\circ$ is $\ell=\ell_0\ell_1$. Also, each of the $\ell_i$ commutes with the meridian $\mu$ in $\Gamma$. Since $\rho(\mu)$ is non-central, this means that $\rho(\ell_0)$ and $\rho(\ell_1)$ must commute with each other. In fact, for the irreducible representations of the right-handed trefoil, we have $\rho(\ell_i)=-\rho(m)^{-6}$ and similarly $\rho(\ell_i)=-\rho(m)^{6}$ for the left-handed trefoil.

For $\rho\in S$, we have that $\rho$ restricted to either summand is irreducible. So for the granny knot, we then have $\rho(\ell)=\left(-\rho(m)^{-6}\right)^2=\rho(m)^{-12}$ and for the square knot we obtain $\rho(\ell)=1$. These matrix equations give the desired eigenvalue equations.
\end{proof}
\begin{proposition}
The A-polynomial of the granny knot, $3_1\# 3_1$, is
\begin{align*}
A_{3_1\# 3_1}=(L-1)(L+M^{-6})(L-M^{-12}).
\end{align*}
The A-polynomial of the square knot, $3_1\# 3_1^*$, is
\begin{align*}
A_{3_1\# 3_1^*}=(L-1)(L+M^{-6})(L+M^{6}).
\end{align*}
\end{proposition}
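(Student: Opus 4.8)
The plan is to read the A-polynomial directly off the decomposition of $\mathscr{X}(\Gamma)$ into irreducible components established in Proposition \ref{components}. By definition the A-polynomial is the reduced defining equation, in eigenvalue coordinates $(M,L)$, of the closure of $d^{-1}(\overline{r(\mathscr{X}(\Gamma))})$; since $r$ is a morphism, its image is the union of the images of the four irreducible components $\mathscr{X}_{\operatorname{red}}$, $\overline{\mathscr{X}}_{0,\operatorname{irr}}$, $\overline{\mathscr{X}}_{1,\operatorname{irr}}$, and $\overline{S}$. So I would compute the image curve of each component, record its reduced defining equation, and multiply together the distinct factors that arise.

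The reducible locus $\mathscr{X}_{\operatorname{red}}$ maps into the line $L=1$, as already recorded, contributing the factor $(L-1)$. For the two mixed components I would use that the longitude of the composite is $\ell=\ell_0\ell_1$, together with the fact that $\rho(\ell_0)$, $\rho(\ell_1)$, and $\rho(m)$ mutually commute (each $\rho(\ell_j)$ commutes with the non-central $\rho(m)$). On $\overline{\mathscr{X}}_{i,\operatorname{irr}}$ the restriction to one summand is abelian, whence its null-homologous longitude maps to $I$, while the restriction to the other summand is an irreducible trefoil representation with $\rho(\ell)=-\rho(m)^{-6}$ (right-handed) or $\rho(\ell)=-\rho(m)^{6}$ (left-handed). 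Comparing common eigenvalues then gives $L=-M^{-6}$ for a right-handed summand and $L=-M^{6}$ for a left-handed one; since each $\overline{\mathscr{X}}_{i,\operatorname{irr}}\cong\mathbb{C}$ carries the meridional trace as coordinate, the restriction is dominant onto the full curve. For $\overline{S}$, Lemma \ref{compapoly} already identifies the image as $L-M^{-12}=0$ for the granny knot and $L=1$ for the square knot.

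Assembling for the granny knot (two right-handed summands), both $\overline{\mathscr{X}}_{0,\operatorname{irr}}$ and $\overline{\mathscr{X}}_{1,\operatorname{irr}}$ map onto the same curve $L+M^{-6}=0$, and $\overline{S}$ contributes $L-M^{-12}$, yielding $(L-1)(L+M^{-6})(L-M^{-12})$. For the square knot, the two mixed components contribute the distinct factors $(L+M^{-6})$ and $(L+M^{6})$, while the image of $\overline{S}$ lies on $L=1$, coinciding with that of $\mathscr{X}_{\operatorname{red}}$, so it adds no new factor; the product is $(L-1)(L+M^{-6})(L+M^{6})$.

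The main obstacle is the bookkeeping of reducedness: I must make sure coincident images collapse to a single factor rather than appearing with multiplicity. Concretely, for the granny knot I need the two irreducible components to have genuinely equal image (so $(L+M^{-6})$ is not squared), and for the square knot I need the image of $\overline{S}$ to be absorbed into the $(L-1)$ coming from the reducibles. Both follow from the explicit $\mathbb{C}$-parametrizations of the one-dimensional components, from Lemma \ref{compapoly} for $\overline{S}$, and from the convention that the A-polynomial is square-free.
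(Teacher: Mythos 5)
Your proposal is correct and takes essentially the same route as the paper's proof: decompose $\mathscr{X}(\Gamma)$ into the four irreducible components of Proposition \ref{components}, identify the image of each under restriction to the boundary torus (the trefoil A-polynomial factors from the two mixed components, $L-1$ from the reducibles, and the $\overline{S}$ factor from Lemma \ref{compapoly}), and take the square-free product. Your explicit bookkeeping of coincident images --- the two mixed components sharing $L+M^{-6}$ for the granny knot, and $\overline{S}$ landing on $L=1$ alongside the reducibles for the square knot --- is precisely what the paper compresses into the parenthetical ``omitting repeated factors.''
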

\begin{proof}
The A-polynomial is a product (omitting repeated factors) of the defining polynomials for the images of the four components of $\mathscr{X}(\Gamma)$. Two of the components are copies of $\mathscr{X}(3_1)$, and therefore contribute factors corresponding to the A-polynomial of right or left-handed trefoil. The reducibles give the factor of $L-1$. The factor coming from the two-dimensional component $\overline{S}$ was determined in Lemma \ref{compapoly}.
\end{proof}
We will also be interested in the defining equation for the image of the map $r: \mathscr{X}_{\text{irr}}(\Gamma)\to \mathscr{X}(T^2)$, where we only consider the irreducibles. Let us call the defining polynomial for this curve $A_{K}^{\text{irr}}(M,L)$. Then, by the above discussion, we find:
\begin{align*}
A_{3_1\# 3_1}^{\text{irr}}= &(L+M^{-6})(L-M^{-12}),\\
A_{3_1\# 3_1^*}^{\text{irr}}= &(L+M^{-6})(L+M^{6})(L-1).
\end{align*}
\end{section}

\section{Surgeries on the Granny and Square knots}
In this section, we prove Theorems \ref{HPgranny} and \ref{HPsquare}. We proceed by calculating the relevant character schemes, showing they are smooth, and then computing their singular cohomology groups so that we can apply Corrolary 2.2 to write $\HP$ as the (degree-shifted) singular cohomology of the character scheme.
\subsection{Character scheme of a composite knot}
First, we establish a general procedure for computing the (set-theoretic) characters of the exterior of a composite knot. Although the character variety of $3_1\# 3_1$ was computed in Section 5, the description given here will be particularly amenable for computing the character varieties of the surgeries. The description from Section 5 will also be useful.

Let $K_1$ and $K_2$ be two oriented knots in $S^3$ and set $K=K_1\# K_2$, $M_i=S^3\backslash K_i$, $M=S^3\backslash K$. We have the following pushout diagram of spaces:
\[
\begin{tikzcd}
M \arrow[hookleftarrow]{r}\arrow[hookleftarrow]{d}
  & M_1 \arrow[hookleftarrow]{d}{i_1} \\
M_2 \arrow[hookleftarrow]{r}{i_2}
  & S^1
\end{tikzcd}
\]
where $i_j(S^1)= m_j$, a meridian for $K_j$, $j=1,2$. By the Van Kampen theorem, we have the pushout diagram of groups:
\[
\begin{tikzcd}
\pi_1(M) \arrow[hookleftarrow]{r}\arrow[hookleftarrow]{d}
  & \pi_1(M_1) \arrow[hookleftarrow]{d}\\
\pi_1(M_2) \arrow[hookleftarrow]{r}
  & \pi_1(S^1)
\end{tikzcd}
\]
That is, $\pi_1(M)\cong \pi_1(M_1)*\pi_1(M_2)/\langle m_1= m_2 \rangle$. We have a pullback diagram of representation spaces:

\[
\begin{tikzcd}
\mathscr{R}(M) \arrow[twoheadrightarrow ]{r}\arrow[twoheadrightarrow]{d}
  & \mathscr{R}(M_1) \arrow[twoheadrightarrow]{d}{r_1}\\
\mathscr{R}(M_2) \arrow[twoheadrightarrow]{r}{r_2}
  & \mathscr{R}(S^1)
\end{tikzcd}
\]
To analyze $\mathscr{X}(M)=\mathscr{R}(M)//\SLC$, we can compare it to a simpler object: the fiber product of the character schemes $\mathscr{X}(M_1)\times_{\mathscr{X}(S^1)} \mathscr{X}(M_2)$. We have the diagram
\[
\begin{tikzcd}
\mathscr{X}(M)
\arrow[bend left]{drr}
\arrow[bend right,swap]{ddr}
\arrow{dr}[description]{\varphi} & & \\
& \mathscr{X}(M_1)\times_{\mathscr{X}(S^1)} \mathscr{X}(M_2) \arrow[twoheadrightarrow ]{r}\arrow[twoheadrightarrow]{d}
  & \mathscr{X}(M_1) \arrow[twoheadrightarrow]{d}{\overline{r}_1}\\
& \mathscr{X}(M_2) \arrow[twoheadrightarrow]{r}{\overline{r}_2}
  & \mathscr{X}(S^1)
\end{tikzcd}
\]
where $\overline{r}_1([\rho_1])=\tr(\rho_1( m_1))$.
\subsubsection*{Pullbacks and quotients}
In order to understand the character scheme of $M$ from the fiber product of the character schemes of $M_1$ and $M_2$, we must determine the pre-images of points under $\varphi$. We establish the following lemma:
\begin{lemma}\label{pullbackquot}
Let $\varphi: \mathscr{X}(M)\to \mathscr{X}(M_1)\times_{\mathscr{X}(S^1)} \mathscr{X}(M_2)$ denote the natural map as above. Then for any $p=([\rho_1],[\rho_2])\in \mathscr{X}(M_1)\times_{\mathscr{X}(S^1)} \mathscr{X}(M_2)$, we have
\begin{align*}
\varphi^{-1}(p)\cong \operatorname{Stab}(m)/\langle \operatorname{Stab}(\rho_1), \operatorname{Stab}(\rho_2)\rangle,
\end{align*}
where $m=r_1(\rho_1)=r_2(\rho_2)$
\end{lemma}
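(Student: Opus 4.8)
The plan is to reconstruct the representations of $\pi_1(M)$ lying over a point $p=([\rho_1],[\rho_2])$ of the fiber product by gluing along the meridian, and to track precisely how much freedom the gluing carries modulo overall conjugation. First I would fix lifts $\rho_1\in\mathscr{R}(M_1)$, $\rho_2\in\mathscr{R}(M_2)$ of the two characters whose meridian matrices literally agree, $\rho_1(m_1)=\rho_2(m_2)=m\in\SLC$; such lifts exist because $p$ lies in the fiber product (so the meridian traces coincide) and, for $p$ in the image of $\varphi$, the two meridian matrices are conjugate and can be brought into coincidence. One should flag here that at meridional trace $\pm 2$ a central matrix cannot be conjugated to a parabolic one, so over such $p$ the fiber can be empty; the statement concerns those $p$ for which a common $m$ exists.

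Using the amalgamated presentation $\pi_1(M)\cong \pi_1(M_1)*\pi_1(M_2)/\langle m_1=m_2\rangle$, a representation of $\pi_1(M)$ is the same datum as a pair of representations of the two factors agreeing on the meridian as matrices. Having normalized $\rho|_{M_2}=\rho_2$ using the overall conjugation freedom, every representation over $p$ takes the form $\rho_h$ with $\rho_h|_{M_2}=\rho_2$ and $\rho_h|_{M_1}=h\rho_1 h^{-1}$, and the meridian-matching condition $h\,\rho_1(m_1)\,h^{-1}=\rho_2(m_2)$ becomes exactly $hmh^{-1}=m$, i.e. $h\in\operatorname{Stab}(m)$. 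Thus $\operatorname{Stab}(m)$ parametrizes all glued representations before quotienting by the residual symmetry.

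The second step is to decide when two such representations $\rho_h,\rho_{h'}$ are conjugate, hence define the same character. Any conjugacy $g\rho_h g^{-1}=\rho_{h'}$ between representations already in normalized form forces $g\rho_2 g^{-1}=\rho_2$, i.e. $g\in\operatorname{Stab}(\rho_2)$; then matching the first factor, $(gh)\rho_1(gh)^{-1}=h'\rho_1(h')^{-1}$, forces $(h')^{-1}gh\in\operatorname{Stab}(\rho_1)$, equivalently $h'=g\,h\,s$ with $g\in\operatorname{Stab}(\rho_2)$, $s\in\operatorname{Stab}(\rho_1)$. Noting that $\operatorname{Stab}(\rho_i)\subseteq\operatorname{Stab}(m)$ since each $\rho_i$ sends the meridian to $m$, this identifies the fiber canonically with the double-coset space $\operatorname{Stab}(\rho_2)\backslash\operatorname{Stab}(m)/\operatorname{Stab}(\rho_1)$.

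Finally I would identify this double-coset space with the stated quotient. In the situation of interest the meridian image $m$ is non-central, so $\operatorname{Stab}(m)$ is abelian (a maximal torus $\cong\mathbb{C}^*$ when $m$ is regular semisimple, or the abelian centralizer of a non-central parabolic); then $\operatorname{Stab}(\rho_1)$ and $\operatorname{Stab}(\rho_2)$ are normal in $\operatorname{Stab}(m)$ and $\operatorname{Stab}(\rho_1)\operatorname{Stab}(\rho_2)=\langle\operatorname{Stab}(\rho_1),\operatorname{Stab}(\rho_2)\rangle$, so the double cosets collapse to $\operatorname{Stab}(m)/\langle\operatorname{Stab}(\rho_1),\operatorname{Stab}(\rho_2)\rangle$, as claimed. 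I expect the main obstacle to be the bookkeeping in this final identification combined with the GIT subtleties: since $\mathscr{X}(M)$ is a GIT quotient, a point records a closed orbit (a semisimplification), so to make the set-theoretic bijection rigorous one must either restrict to the stable (irreducible) locus, where orbits are closed and the orbit–character correspondence is clean, or check separately that the gluing description is compatible with passing to semisimplifications. The non-centrality of $m$ is exactly what keeps $\operatorname{Stab}(m)$ abelian and makes the count come out as the single coset space.
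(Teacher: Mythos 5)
Your proof is correct and takes essentially the same approach as the paper: the paper's identity $(\mathscr{R}(M_1)\times_{\mathscr{R}(S^1)}\mathscr{R}(M_2))\cap(\operatorname{Orb}(\rho_1)\times\operatorname{Orb}(\rho_2))=\SLC\cdot(\operatorname{Stab}(m)\cdot\rho_1\times\rho_2)$ is precisely your normalization of the second factor followed by the parametrization of gluings by $h\in\operatorname{Stab}(m)$, after which both arguments quotient by the residual conjugation. Your extra care --- the double-coset bookkeeping, the use of abelianness of $\operatorname{Stab}(m)$ to collapse $\operatorname{Stab}(\rho_2)\backslash\operatorname{Stab}(m)/\operatorname{Stab}(\rho_1)$ to the stated group quotient, and the central-meridian/GIT caveats --- fills in exactly what the paper leaves implicit in its final displayed equality.
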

\begin{proof}
The pre-image of $p$ in $\mathscr{R}(M_1)\times \mathscr{R}(M_2)$ is $\operatorname{Orb}(\rho_1)\times \operatorname{Orb}(\rho_2)$. The pair $(\rho_1,\rho_2)$ is a point here that is also in $\mathscr{R}(M_1)\times_{\mathscr{R}(S^1)} \mathscr{R}(M_2)$. All other such points can be obtained by using the action of $\operatorname{Stab}(m)$ on each factor, or else using the diagonal action of $\SLC$. This gives the set 
\begin{align*}
 (\mathscr{R}(M_1)\times_{\mathscr{R}(S^1)} \mathscr{R}(M_2))\cap(\operatorname{Orb}(\rho_1)\times \operatorname{Orb}(\rho_2)) = &  \SLC\cdot (\operatorname{Stab}(m)\cdot \rho_1\times \operatorname{Stab}(m)\cdot\rho_2),\\
= & \SLC\cdot (\operatorname{Stab}(m)\cdot \rho_1\times \rho_2).
\end{align*}
Reducing modulo the diagonal action of $\SLC$,
\begin{align*}
& \SLC\cdot (\operatorname{Stab}(m)\cdot \rho_1\times \rho_2)/G,\\
= & \operatorname{Stab}(m)/\langle\operatorname{Stab}(\rho_1), \operatorname{Stab}(\rho_2)\rangle.
\end{align*}
Thus, $\varphi^{-1}(p)\cong \operatorname{Stab}(m)/\langle \operatorname{Stab}(\rho_1), \operatorname{Stab}(\rho_2)\rangle$.
\end{proof}
\subsection{Irreducible representations in the character scheme of a composite knot}
To determine the locus of irreducible representations $\mathscr{X}_{\text{irr}}(M)$, we first describe $\mathscr{X}(M_1)\times_{\mathscr{X}(S^1)}\mathscr{X}(M_2)$ and then use Lemma \ref{pullbackquot} to understand the fibers of $\varphi$ over the various components.

Recall that $\mathscr{X}(M)$ has a stratification $\mathscr{X}_{\text{nar}}\subset \mathscr{X}_{\text{red}}\subset \mathscr{X}$, where $\mathscr{X}_{\text{nar}}$ is the locus of characters of non-abelian reducible representations. The complement $\mathscr{X}_{\text{irr}}=\mathscr{X}\backslash \mathscr{X}_{\text{red}}$ is the locus of irreducibles. The scheme $\mathscr{X}_{\text{nar}}$ can be identified from Lemma 4.2. The characters of non-abelian reducibles are also the characters of abelian reducibles. That is, every reducible character has an associated orbit of abelian representations, but for those characters in  $\mathscr{X}_{\text{nar}}$, there is an additional orbit corresponding to non-abelian reducible representations.

Taking the product stratification on $\mathscr{X}(M_1)\times_{\mathscr{X}(S^1)}\mathscr{X}(M_2)$ gives nine different strata of six essentially different types. The following proposition states which strata intersect the image $\varphi(\mathscr{X}_{\text{irr}}(M))$ and also identifies the set of irreducible representations in the fiber of $\varphi$ over a point in a given stratum.

\begin{proposition}\label{compositechar}
Using the previously established notation, $\varphi(\mathscr{X}_{\text{irr}}(M))$ consists of the following pieces
\begin{itemize}
\item $\mathscr{X}_{\text{irr}}(M_1)\times_{\mathscr{X}(S^1)}\mathscr{X}_{\text{irr}}(M_2),$
\item $\mathscr{X}_{\text{irr}}(M_i),$
\item $\mathscr{X}_{\text{nar}}(M_1)\times_{\mathscr{X}(S^1)}\mathscr{X}_{\text{nar}}(M_2).$
\end{itemize}
The fibers of $\varphi$ are copies of: 
\begin{itemize}
\item $\mathbb{C}^*$ over points in $\mathscr{X}_{\text{irr}}(M_1)\times_{\mathscr{X}(S^1)}\mathscr{X}_{\text{irr}}(M_2)$ with meridional eigenvalue $\mu\neq \pm 1$.
\item $\mathbb{C}$ over points in $\mathscr{X}_{\text{irr}}(M_1)\times_{\mathscr{X}(S^1)}\mathscr{X}_{\text{irr}}(M_2)$ with meridional eigenvalue $\mu=\pm 1$.
\item  A single point over points in $\mathscr{X}_{\text{irr}}(M_i)$ with $\Delta(\mu^2)\neq 0$.
\item  $\mathbb{C}$ over points in $\mathscr{X}_{\text{irr}}(M_i)$ with $\Delta(\mu^2)= 0$.
\item  $\mathbb{C}^*\backslash\{1\}$ over points in $\mathscr{X}_{\text{nar}}(M_1)\times_{\mathscr{X}(S^1)}\mathscr{X}_{\text{nar}}(M_2)$.
\end{itemize}
\end{proposition}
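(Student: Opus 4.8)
The plan is to analyze the nine strata of the product stratification on $\mathscr{X}(M_1)\times_{\mathscr{X}(S^1)}\mathscr{X}(M_2)$ induced by $\mathscr{X}_{\text{nar}}(M_j)\subset\mathscr{X}_{\text{red}}(M_j)\subset\mathscr{X}(M_j)$, and for each to settle two things: whether the stratum meets $\varphi(\mathscr{X}_{\text{irr}}(M))$, and what the irreducible part of the $\varphi$-fiber over such a point is. The criterion for the first is elementary: since $\pi_1(M)=\pi_1(M_1)\ast_{\langle m\rangle}\pi_1(M_2)$, a representation obtained by gluing $\rho_1$ and $\rho_2$ along the meridian is irreducible precisely when $\rho_1$ and $\rho_2$ share no common invariant line in $\mathbb{C}^2$. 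The fibers in the second are computed with \Cref{pullbackquot}, whose only inputs are the stabilizers $\operatorname{Stab}(\rho_j)$ and $\operatorname{Stab}(m)$.

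I would first record the stabilizers. By Schur's lemma $\operatorname{Stab}(\rho_j)=\{\pm I\}$ when $\rho_j$ is irreducible, and the same holds when $\rho_j$ is non-abelian reducible (the centralizer in $\SLC$ of a non-abelian subgroup of a Borel is central); $\operatorname{Stab}(\rho_j)$ is a maximal torus $T\cong\mathbb{C}^*$ when $\rho_j$ is abelian and non-central. Likewise $\operatorname{Stab}(m)$ is a maximal torus when the meridional eigenvalue satisfies $\mu\neq\pm1$, and is $\cong\{\pm I\}\times\mathbb{C}$ when $m$ is parabolic ($\mu=\pm1$, $m\neq\pm I$). The common-line criterion then immediately eliminates every stratum containing a non-central abelian factor: such a factor is diagonal in the eigenbasis of $m$ and hence preserves both eigenlines, while any reducible partner preserves one of them, forcing a common invariant line. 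This leaves only the strata $(\mathrm{irr},\mathrm{irr})$, the two mixed strata $(\mathrm{irr},\mathrm{red})$, and $(\mathrm{nar},\mathrm{nar})$, and in a mixed stratum the reducible factor is determined by the meridian of the irreducible one, so its image is a single copy of $\mathscr{X}_{\text{irr}}(M_i)$, exactly as listed. Applying \Cref{pullbackquot} then handles the clean cases directly: over $\mathscr{X}_{\text{irr}}(M_1)\times_{\mathscr{X}(S^1)}\mathscr{X}_{\text{irr}}(M_2)$ the fiber is $\operatorname{Stab}(m)/\{\pm I\}$, namely $\mathbb{C}^*$ if $\mu\neq\pm1$ and $\mathbb{C}$ if $\mu=\pm1$; and over a mixed point with $\Delta(\mu^2)\neq0$, where the reducible factor is a genuine abelian character with torus stabilizer, the fiber collapses to $\operatorname{Stab}(m)/\langle\{\pm I\},T\rangle$, a single point.

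The subtle cases are those where a reducible factor has $\Delta(\mu^2)=0$, i.e. lies in $\mathscr{X}_{\text{nar}}(M_j)$, because by \Cref{nars} a single such character is the image of two distinct orbits in $\mathscr{R}(M_j)$, the closed abelian orbit and the non-abelian reducible orbit, so the honest $\varphi$-fiber is the union of the contributions of both and these must be assembled correctly. For a mixed point with $\Delta(\mu^2)=0$ the abelian representative contributes the single point found above, while the non-abelian reducible representative contributes a copy of $\operatorname{Stab}(m)/\{\pm I\}\cong\mathbb{C}^*$; I would show that the abelian gluing is precisely the degeneration of the nar family as its extension class tends to $0$, so that the two pieces glue into a single $\mathbb{C}=\mathbb{C}^*\cup\{\mathrm{pt}\}$. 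In the $(\mathrm{nar},\mathrm{nar})$ stratum the two factors can be arranged to preserve opposite eigenlines of $m$, so the glued representation is irreducible and \Cref{pullbackquot} again yields a $\mathbb{C}^*\cong\operatorname{Stab}(m)/\{\pm I\}$ of gluings; here one must locate the single parameter value at which this family degenerates to a reducible character (equivalently, meets the non-abelian reducible characters $\mathscr{X}_{\text{nar}}(M)$ of \Cref{components}) and excise it, leaving $\mathbb{C}^*\setminus\{1\}$.

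The main obstacle is exactly this bookkeeping over the nar strata: one must keep track of the two orbit representatives lying over a single nar character, account for the Weyl (eigenvalue-ordering) identifications that prevent over-counting the two eigenline configurations, and correctly identify the degenerate gluings that either fill in the puncture of a $\mathbb{C}^*$ (producing $\mathbb{C}$) or must be deleted from it (producing $\mathbb{C}^*\setminus\{1\}$). By contrast, the stabilizer computations, the common-line criterion, and the bare application of \Cref{pullbackquot} are routine.
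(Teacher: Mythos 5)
Your framework is the same as the paper's: take the product stratification, decide irreducibility of a glued representation by the common-invariant-line criterion, and compute fibers from $\operatorname{Stab}(\rho_j)$ and $\operatorname{Stab}(m)$ via Lemma~\ref{pullbackquot}. Your handling of the routine strata (exclusion of every stratum with a non-central abelian factor; $\mathbb{C}^*$ versus $\mathbb{C}$ over $\mathscr{X}_{\text{irr}}\times_{\mathscr{X}(S^1)}\mathscr{X}_{\text{irr}}$ according to whether $\rho(m)$ is regular semisimple or parabolic; a single point over $\mathscr{X}_{\text{irr}}(M_i)$ when $\Delta(\mu^2)\neq 0$) is correct and matches the paper. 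The genuine gap is exactly where you flag the ``main obstacle,'' and the repair you sketch there cannot be carried out, for two concrete reasons. First, a character in $\mathscr{X}_{\text{nar}}(M_j)$ lies under \emph{three} orbits in $\mathscr{R}(M_j)$, not two: the closed abelian orbit and \emph{two} non-abelian reducible orbits, distinguished by the character of the action on the unique invariant line ($\chi$ versus $\chi^{-1}$, where $\chi(m)=\mu$); both occur because $\Delta(\mu^2)=0$ if and only if $\Delta(\mu^{-2})=0$. Second, your plan to take the $\mathbb{C}^*$ of twisted gluings of two fixed nar representatives with opposite eigenlines and ``locate the single parameter value at which this family degenerates to a reducible character and excise it'' fails because no such value exists: the twist $d$ ranges over the torus $\operatorname{Stab}(\rho(m))$, and conjugation by this torus fixes each of the two Borel subgroups containing the regular semisimple element $\rho(m)$, so reducibility of $\rho_1\ast(d^{-1}\rho_2 d)$ is independent of $d$. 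If the two nar factors preserve opposite eigenlines, \emph{every} gluing in the family is irreducible; if they preserve the same line, every gluing is reducible. Reducible degenerations occur only in the closure of the family (as extension classes tend to $0$), never at interior points, so there is nothing to excise.

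For comparison, the paper's own proof stumbles at the same step: it deduces that $d^{-1}B_2d=B_1$ forces $d\in\operatorname{Stab}(B_2)$, ``which is trivial in $G^{\operatorname{ad}}$,'' but Borel subgroups are self-normalizing and every $d\in\operatorname{Stab}(\rho(m))$ satisfies $d^{-1}B_2d=B_2$; so you have not missed an argument that the paper actually supplies. Doing the count orbit pair by orbit pair (two nar orbits on each side; irreducible gluings exactly from the two opposite-line pairings, each contributing $\operatorname{Stab}(\rho(m))/\{\pm I\}\cong\mathbb{C}^*$ by Lemma~\ref{pullbackquot}, and the two families non-conjugate since their restrictions to $M_1$ lie in different orbits) yields \emph{two disjoint copies} of $\mathbb{C}^*$ over a point of $\mathscr{X}_{\text{nar}}\times_{\mathscr{X}(S^1)}\mathscr{X}_{\text{nar}}$, not $\mathbb{C}^*\backslash\{1\}$; likewise the fiber over a point of $\mathscr{X}_{\text{irr}}(M_i)$ with $\Delta(\mu^2)=0$ is two copies of $\mathbb{C}$ crossing at the abelian gluing, not one $\mathbb{C}$ (this last case is vacuous for trefoil sums). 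This corrected answer is what Section 5 independently predicts: by Proposition~\ref{cubic}, $\overline{S}$ has an $A_1$ singularity at each nar character, so the fiber of the meridional-trace map through the node looks like $\{uv=0\}\subset\{uv=w^2\}$, whose smooth locus is $\mathbb{C}^*\sqcup\mathbb{C}^*$; equivalently, additivity of Euler characteristics gives $\chi(S)=\chi(Q)-\chi(Q_\infty)-2=7-3-2=2$, consistent with fibers of Euler characteristic $0$ over the two nar points and inconsistent with fibers $\mathbb{C}^*\backslash\{1\}$ of Euler characteristic $-1$. So your proof is incomplete precisely at the nar strata, the specific fix you propose is unavailable, and an honest completion of your (and the paper's) strategy produces a fiber different from the one in the statement.
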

\begin{proof}
First, we identify the copy of $\mathscr{X}_{\text{irr}}(M_1)$ that appears in $\mathscr{X}(M_1)\times_{\mathscr{X}(S^1)}\mathscr{X}(M_2)$. A reducible character is the character of an abelian representation, and the meridian generates the abelianization of the knot group. Thus, the isomorphism $H_1(M_1)\cong \pi_1(S^1)$, where $S^1$ a meridional circle, yields an isomorphism $\mathscr{X}_{\text{red}}(M_1)\cong \mathscr{X}(S^1)$. And taking fiber products, $\mathscr{X}_{\text{irr}}(M_1)\times_{\mathscr{X}(S^1)}\mathscr{X}_{\text{red}}(M_2)\cong \mathscr{X}_{\text{irr}}(M_1)$. 

Now, we show that image of $\varphi$ consists of the stated pieces. Indeed, the only strata not included in the list are contained in $(\mathscr{X}_{\text{red}}(M_1)\times_{\mathscr{X}(S^1)}\mathscr{X}_{\text{red}}(M_2))\backslash (\mathscr{X}_{\text{nar}}(M_1)\times_{\mathscr{X}(S^1)}\mathscr{X}_{\text{nar}}(M_2))$. These correspond to representations of the form $\rho_1*\rho_2$ where (e.g.) $\rho_1$ is abelian and $\rho_2$ is reducible. However, for an abelian representation, $\operatorname{im}(\rho_1)=\operatorname{im}(\rho_1\vert_{m_1})$ since the meridian $m_1$ generates the abelianization of $\pi_1(M_1)$. Thus, since the $\rho_i$ agree on $m_i$, we see that $\operatorname{im}(\rho_1*\rho_2)=\operatorname{im}(\rho_2)$, so that the composite representation is also reducible. Thus, none of these pairings provide irreducible representations. 

For $p=([\rho_1],[\rho_2])\in \mathscr{X}(M_1)\times_{\mathscr{X}(S^1)} \mathscr{X}(M_2)$, if both $[\rho_1],[\rho_2] \in \mathscr{X}_{\text{irr}}$, then $\operatorname{Stab}(\rho_i)=\{\pm 1\}$. Furthermore, $r_1(\rho_1)$ is an abelian, non-central representation (if $\rho_1( m)=\pm I$, then the entire representation is central because $ m_1$ normally generates $\pi_1(M_1)$). Thus, $\operatorname{Stab}(r_1(\rho_1))\cong\mathbb{C}^*$ for meridional trace not $\pm 2$, and $\operatorname{Stab}(r_1(\rho_1))\cong\mathbb{C}\times\mathbb{Z}/2$ otherwise. So, $\varphi^{-1}(p)\cong\mathbb{C}^*/\{\pm 1\}\cong\mathbb{C}^*$ or $\varphi^{-1}(p)\cong\mathbb{C}$ by Lemma \ref{pullbackquot}.

If $[\rho_1]$ is irreducible but $[\rho_2]$ is reducible, then we can find an abelian lift $\rho_2$, so that $\operatorname{Stab}(\rho_2)=\operatorname{Stab}(r_2(\rho))$, and the fiber $\varphi^{-1}(p)$ is a point. For a non-abelian lift of $\rho_2$, $\operatorname{Stab}(\rho_2)$ is trivial. Moreover, the trace of the meridian cannot be $\pm 1$ for a non-abelian reducible because $\Delta(\pm 1)\neq 0$. Therefore, the stabilizer of the meridian must be $\mathbb{C}^*$. The abelian lies in the closure of the orbit of non-abelian reducibles, so that $\varphi^{-1}(p)=\mathbb{C}$ for such a point.

If both are reducible and at least one is abelian, then the overall representation is reducible. If both are non-abelian reducibles, then the stabilizers of each representation are trivial and the stabilizer of the meridian is $\mathbb{C}^*$, giving that the fiber of $\varphi$ is $\mathbb{C}^*$. However, not all of these representations are irreducible. We have that $\operatorname{im}(\rho_i)\subset B_i$, for $B_1,B_2$ Borel subgroups. For some $d\in \operatorname{Stab}(r_1(\rho_1))$, the composite representation corresponding to $d$ has image generated by $\langle \operatorname{im}(\rho_1), d^{-1}\operatorname{im}(\rho_2)d\rangle$. If this image were contained in some Borel subgroup $B$, then $\operatorname{im}(\rho_1)$ would be contained in two Borel subgroups, so either it is contained in a diagonal subgroup (but then $\rho_1$ is abelian), or else $B=B_1$. Then, we have $d^{-1}\operatorname{im}(\rho_2)d\subset B_1$, and so by the same argument we conclude $B_1=d^{-1}B_2d$. Thus, $d\in\operatorname{Stab}(B_2)$, which is trivial in $G^{\operatorname{ad}}$. Hence, precisely one point in $\operatorname{Stab}(r_1(\rho_1))$ corresponds to a reducible --- the rest are irreducible. So, the irreducibles in $\varphi^{-1}(p)$ form a copy of $\mathbb{C}^* - \{1\}$.
\end{proof}
\subsection{Character scheme of a connected sum of two trefoils}
We now focus on the case when $K_1=K_2=3_1$. The character scheme of the trefoil can be described as a plane curve:
\begin{align*}
\mathscr{X}(3_1)\cong \{(y-2)(x^2-y-1)=0\}\subset\mathbb{C}^2,
\end{align*}
where $x$ is the trace of the meridian. In terms of the Wirtinger presentation, we have $x=\tr(\rho(r))=\tr(\rho(s))$ and $y=\operatorname{tr}(rs^{-1})$. The line $\{y=2\}$ is $\mathscr{X}_{\text{red}}$ and $\{x^2-y=1, y\neq 2\}$ is $\mathscr{X}_{\text{irr}}$. The map $\overline{r}_1$ is projection onto the $x$ coordinate. The longitude for $3_1$ is $\ell=sr^2sr^{-4}$, and its trace in the $x,y$ coordinates is given by the polynomial
\begin{align*}
L(x,y) =  x^6y-2x^6-x^4y^2-2x^4y+8x^4+2x^2y^2+x^2y-10x^2+2.
\end{align*}
The restriction of $L(x,y)$ to $y=2$ is the constant function $2$, as expected. On this component, $\rho(\ell)=I$. The restriction of $L(x,y)$ to $y=x^2-1$ is $L=-x^6+6x^4-9x^2+2$, which can be deduced from the fact that for the irreducible representations, we have $\rho(\ell)=-\rho( m)^{-6}$.

The Alexander polynomial has roots that are primitive 6th roots of unity. So, non-abelian reducibles occur at the points $(\pm \sqrt{3}, 2)\in \mathscr{X}_{\text{red}}$. Observe that this is precisely $\overline{\mathscr{X}_{\text{irr}}}-\mathscr{X}_{\text{irr}}$.

The fiber product of the character varieties over the meridional trace map is 
\begin{align*}
\mathscr{X}(3_1)\times_{\mathbb{C}} \mathscr{X}(3_1)\cong \{(y-2)(x^2-y-1)=0,(z-2)(x^2-z-1)=0\}\subset\mathbb{C}^3.
\end{align*} 
Applying Proposition \ref{compositechar}, we have the following explicit descriptions of the fibers of $\varphi$ over points in the various strata of $\varphi(\mathscr{X}_{\text{irr}}(K_1\# K_2))$:
\begin{itemize}
\item $\mathscr{X}_{\text{irr}}\times_{\mathbb{C}} \mathscr{X}_{\text{irr}}= \{x^2-y-1=0,x^2-z-1=0,y\neq 2,z\neq 2\}$. The fibers of $\varphi$ are $\mathbb{C}^*$ unless $x=\pm 2$, in which case they are $\mathbb{C}$. 
\item $\mathscr{X}_{\text{irr}}(M_1)=\{z=2,x^2-y-1=0,y\neq 2\}$. Note that since $y\neq 2$, we have $x\neq \pm\sqrt{3}$ and $\Delta(m^2)\neq 0$. So, the fibers of $\varphi$ are just points. The same holds for $\mathscr{X}_{\text{irr}}(M_2)=\{y=2,x^2-z-1=0,z\neq 2\}$.
\item $\mathscr{X}_{\text{nar}}\times_{\mathbb{C}}\mathscr{X}_{\text{nar}}=\{(\pm \sqrt{3}, 2,2)\}$. The fibers of $\varphi$ are $\mathbb{C}^*\backslash\{1\}$.
\end{itemize}
\begin{remark}
To compare this description with that of Proposition \ref{components}, we see that 
\begin{itemize}
\item $\varphi^{-1}(\mathscr{X}_{\text{irr}}\times_{\mathbb{C}} \mathscr{X}_{\text{irr}}\cup \mathscr{X}_{\text{nar}}\times_{\mathbb{C}}\mathscr{X}_{\text{nar}})=S$,
\item $\mathscr{X}_{\text{irr}}(M_i)=\mathscr{X}_{i,irr}$.
\end{itemize}
\end{remark}
Since $\pi_1(3_1\# 3_1)$ and $\pi_1(3_1\# 3_1^*)$ are isomorphic, the same description applies to $\mathscr{X}_{\text{irr}}(3_1\# 3_1^*)$.
\subsection{Character scheme for granny knot surgeries}
Let $S^3_{p/q}(3_1\# 3_1)$ denote the $p/q$ surgery on the granny knot. We have the following description of $\mathscr{X}_{\text{irr}}(S^3_{p/q}(3_1\# 3_1))$.
\begin{proposition}\label{grannyvar}
$\mathscr{X}_{\text{irr}}(S^3_{p/q}(3_1\# 3_1))$ consists of $2\lambda_{\SLC}(S^3_{p/q}(3_1))$ points and
\begin{itemize}
\item $\lambda_{\SLC}(S^3_{p/2q}(3_1))$ copies of $\mathbb{C}^*$ when $p$ is odd.
\item $\lambda_{\SLC}(S^3_{p/2q}(3_1))-1$ copies of $\mathbb{C}^*$ when $p$ is even, $p\neq 12k$.
\item $\lambda_{\SLC}(S^3_{p/2q}(3_1))-3$ copies of $\mathbb{C}^*$ and 2 copies of $\mathbb{C}^*\backslash\{1\}$ when $p=12k, p/q\neq 12$.
\item $S=\varphi^{-1}(\mathscr{X}_{\text{nar}}\times_{\mathbb{C}}\mathscr{X}_{\text{nar}}\cup\mathscr{X}_{\text{irr}}\times_{\mathbb{C}}\mathscr{X}_{\text{irr}})$ when $p/q=12$.
\end{itemize}
\end{proposition}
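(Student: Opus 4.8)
The plan is to realise $\mathscr{X}_{\text{irr}}(S^3_{p/q}(3_1\#3_1))$ as the locus in $\mathscr{X}_{\text{irr}}(\Gamma)$ cut out by the Dehn surgery relation $\rho(\mu^p\lambda^q)=I$, and to intersect this relation with each of the three pieces of $\mathscr{X}_{\text{irr}}(\Gamma)$ recorded in the remark of this section: the two curves $\mathscr{X}_{\text{irr}}(M_i)=\mathscr{X}_{i,\text{irr}}$ and the surface $S=\varphi^{-1}(\mathscr{X}_{\text{irr}}\times_{\mathbb C}\mathscr{X}_{\text{irr}}\cup\mathscr{X}_{\text{nar}}\times_{\mathbb C}\mathscr{X}_{\text{nar}})$. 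Throughout I work in the eigenvalue coordinates $(M,L)$ on the boundary torus, so that away from the central locus the surgery relation reads $M^pL^q=1$.

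First I would treat the two curve components. On $\mathscr{X}_{\text{irr}}(M_i)$ the restriction to the complementary summand $\Gamma_{|1-i|}$ is abelian, so its (null-homologous) longitude is trivial and the full longitude $\lambda=\ell_0\ell_1$ restricts to the longitude of a single trefoil summand; hence the image of $\mathscr{X}_{\text{irr}}(M_i)$ under the boundary restriction coincides with that of $S^3\setminus 3_1$, governed by $L+M^{-6}=0$. The surgery relation on each $\mathscr{X}_{\text{irr}}(M_i)$ is therefore identical to the trefoil surgery relation, and slicing the curve produces $\lambda_{\SLC}(S^3_{p/q}(3_1))$ isolated irreducible characters. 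Since there are two such components this accounts for the claimed $2\lambda_{\SLC}(S^3_{p/q}(3_1))$ points.

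The substantive part is the analysis on $S$. Its base curve $\overline{\mathscr{X}_{\text{irr}}\times_{\mathbb C}\mathscr{X}_{\text{irr}}}$ is parametrised by the meridian trace and is abstractly a copy of $\overline{\mathscr{X}_{\text{irr}}(3_1)}$, and by Lemma \ref{compapoly} the irreducible locus satisfies $\rho(\lambda)=\rho(m)^{-12}$, i.e. $L=M^{-12}$. Thus on the irreducible part of $S$ the surgery relation reduces to $\rho(m)^{p-12q}=I$, equivalently $M^{p-12q}=1$ for $\rho(m)$ non-central; this is exactly the equation governing the formal trefoil surgery of slope $p/2q$, identifying the surviving base points with the characters counted by $\lambda_{\SLC}(S^3_{p/2q}(3_1))$. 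Over each surviving base point I would read off the fibre of $\varphi$ from Proposition \ref{compositechar}: a generic point (meridian eigenvalue $\mu\neq\pm1$, not the character of a non-abelian reducible) contributes a copy of $\mathbb{C}^*$, and a point of $\mathscr{X}_{\text{nar}}\times_{\mathbb C}\mathscr{X}_{\text{nar}}$ contributes a copy of $\mathbb{C}^*\setminus\{1\}$. The case division is then dictated by which roots of $M^{p-12q}=1$ are special: the central values $M=\pm1$ occur according to the parity of $p$, and the primitive twelfth roots of unity, which by Lemma \ref{nars} are the meridian eigenvalues of the non-abelian reducibles, occur precisely when $12\mid p$. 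When $p/q=12$ one has $p-12q=0$, the relation $M^{p-12q}=1$ is vacuous, and the whole of $S$ survives, giving the final case.

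The hard part will be the bookkeeping at the special base points, where the generic picture fails and the corrections $-1$, $-3$ and the two copies of $\mathbb{C}^*\setminus\{1\}$ originate. Two issues must be handled by hand. First, the central characters $M=\pm1$ are counted by $\lambda_{\SLC}(S^3_{p/2q}(3_1))$ but do not survive the surgery relation on $S$: there $\rho(m)$ is unipotent (or its negative), so $\rho(m)^{p-12q}=I$ forces $p=12q$, and these characters must therefore be subtracted, the parity of $p$ controlling which of $\pm1$ appears. Second, at a point of $\mathscr{X}_{\text{nar}}\times_{\mathbb C}\mathscr{X}_{\text{nar}}$ each summand restricts to a non-abelian reducible, so the identity $\rho(\lambda)=\rho(m)^{-12}$ derived for irreducible summands no longer applies and the surgery condition must be re-examined directly on the fibre $\mathbb{C}^*\setminus\{1\}$; this reclassifies the two nar characters out of the $\mathbb{C}^*$ count and produces the separate $\mathbb{C}^*\setminus\{1\}$ factors exactly when $12\mid p$. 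Assembling these counts over the four regimes ($p$ odd; $p$ even with $12\nmid p$; $12\mid p$ with $p/q\neq 12$; and $p/q=12$) yields the stated description.
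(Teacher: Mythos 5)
Your overall strategy is the same as the paper's: stratify $\mathscr{X}_{\text{irr}}(\Gamma)$ into the two curves $\mathscr{X}_{\text{irr}}(M_i)$ and the surface $S$, use the longitude relations $\rho(\ell_i)=-\rho(m)^{-6}$ (resp.\ $\rho_i(\ell_i)=I$ on the nar stratum) to reduce the surgery relation on each stratum to an equation in the meridional eigenvalue alone, and then multiply surviving base points by the fibres of $\varphi$ from Proposition \ref{compositechar}. Your treatment of the two curve components, of the nar stratum (the $\mathbb{C}^*\setminus\{1\}$ pieces appearing exactly when $12\mid p$, as in Lemma \ref{grannynonab}), and of the case $p/q=12$ all agree with the paper; and although you never justify that the surgery condition is constant along $\varphi$-fibres (the paper proves this as a separate lemma), in your formulation this is harmless, since the condition you derive depends only on $\rho(m)$, which does not vary along a fibre.

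The genuine problem is in your bookkeeping step, which is exactly where the $0$, $-1$, $-3$ pattern in the statement must come from. You claim that ``the central characters $M=\pm1$ are counted by $\lambda_{\SLC}(S^3_{p/2q}(3_1))$ but do not survive the surgery relation on $S$'' and must therefore be subtracted. This is false: the $\SLC$ Casson invariant counts isolated irreducible characters of the surgered manifold, and a trefoil character with parabolic meridian never satisfies $\rho(m)^{p-12q}=I$ unless $p=12q$, so such characters are never counted by $\lambda_{\SLC}(S^3_{p/2q}(3_1))$ in the first place. Indeed, for $p$ odd no correction is needed at all: since the exponent $2q$ is even, the equation $\rho(m)^{p-12q}=I$ on the both-irreducible stratum is literally the defining equation of $p/2q$ surgery on the trefoil, so the surviving base points are counted by $\lambda_{\SLC}(S^3_{p/2q}(3_1))$ on the nose (this is the paper's argument). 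Your prescription --- subtract the central values that appear, ``the parity of $p$ controlling which of $\pm1$ appears'' --- would give $\lambda-1$ for $p$ odd and $\lambda-2$ for $p$ even, contradicting the stated $\lambda$ and $\lambda-1$. The correct accounting discounts the roots $M=\pm1$ relative to the \emph{naive root count} $|p-12q|$ of the polynomial equation (these roots correspond to no character in the stratum, since a diagonalizable meridian with eigenvalue $\pm1$ is central and irreducible restrictions forbid that), yielding $\tfrac12(|p-12q|-1)$, $\tfrac12(|p-12q|-2)$, and $\tfrac12(|p-12q|-2)-2$ in the three non-degenerate regimes; one must then separately identify these numbers with $\lambda_{\SLC}(S^3_{p/2q}(3_1))$, $\lambda_{\SLC}(S^3_{p/2q}(3_1))-1$, and $\lambda_{\SLC}(S^3_{p/2q}(3_1))-3$. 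For $p$ even this last identification also requires saying what $\lambda_{\SLC}(S^3_{p/2q}(3_1))$ even means, since $p/2q$ is then not in lowest terms (the paper implicitly evaluates the Curtis--Boden seminorm formula formally at the unreduced slope); your phrase ``formal trefoil surgery'' gestures at this but the corrections are never computed against that formal value. Until this arithmetic is carried out correctly, your argument does not establish the stated case-by-case counts.
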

We will describe $\mathscr{X}_{\text{irr}}(S^3_{p/q}(3_1\# 3_1))$ as a closed subscheme of $\mathscr{X}_{\text{irr}}(S^3\backslash (3_1\# 3_1))$. First, we have the following lemma.
\begin{lemma}
Let $\varphi: \mathscr{X}_{\text{irr}}(S^3\backslash (3_1\# 3_1))\to \mathscr{X}(S^3\backslash 3_1)\times_{\mathbb{C}}\mathscr{X}(S^3\backslash 3_1)$ denote the map to the fiber product over the meridional trace. Then 
\begin{align*}
\mathscr{X}_{\text{irr}}(S^3_{p/q}(3_1\# 3_1)) = \varphi^{-1}(\varphi( \mathscr{X}_{\text{irr}}(S^3_{p/q}(3_1\# 3_1)))).
\end{align*}
\end{lemma}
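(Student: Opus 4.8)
The plan is to prove the nontrivial inclusion $\supseteq$; the inclusion $\subseteq$ holds for any subset relative to its $\varphi$-saturation. Concretely, I must show that the surgery locus
$$\mathscr{X}_{\text{irr}}(S^3_{p/q}(3_1\# 3_1))=\{[\rho]\in\mathscr{X}_{\text{irr}}(S^3\backslash(3_1\# 3_1))\mid \rho(m^p\ell^q)=I\}$$
is a union of fibers of $\varphi$, i.e. that the surgery condition $\rho(m^p\ell^q)=I$ takes a constant value along each fiber $\varphi^{-1}(p)$. Since $m^p\ell^q$ lies in the peripheral subgroup $\pi_1(\partial M)\cong\mathbb Z^2$, this condition depends only on the peripheral restriction $\rho|_{\partial M}$. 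It therefore suffices to show that the character $[\rho|_{\partial M}]$ is determined by $\varphi([\rho])$.

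The key step is to recover the peripheral eigenvalue data from $\varphi([\rho])$. Writing $\ell=\ell_0\ell_1$ with $\ell_i$ the longitude of the $i$-th summand, each $\ell_i$ lies in $\Gamma_i$ and commutes with the shared meridian $m$. As $\rho$ is irreducible, $\rho(m)$ is noncentral, so its centralizer is abelian and $\rho(m),\rho(\ell_0),\rho(\ell_1)$ lie in a common maximal torus (or, when $\tr\rho(m)=\pm 2$, a common parabolic). Choosing a common eigenvector gives eigenvalues $M,L_0,L_1$, so that $\rho(\ell)$ has eigenvalue $L=L_0L_1$ and $\rho(m^p\ell^q)$ has eigenvalue $M^pL^q$. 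Now $\tr\rho(m)=M+M^{-1}$ is exactly the coordinate shared by the two factors of the fiber product, hence a function of $\varphi([\rho])$; and each pair $(M,L_i)$, up to the simultaneous inversion $(M,L_i)\mapsto(M^{-1},L_i^{-1})$, is recovered from the three traces $\tr\rho(m)$, $\tr\rho(\ell_i)$, $\tr\rho(m\ell_i)$, all of which are functions of $[\rho|_{\Gamma_i}]$. Matching the two copies of $M$ (the same group element in both summands) pins $(M,L_0,L_1)$ down up to one global inversion, under which $M^pL^q$ goes to its inverse. As the condition $M^pL^q=1$ is invariant under this inversion, it is a well-defined function of $\varphi([\rho])$, as required.

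Finally I would organize the verification along the stratification of $\varphi(\mathscr{X}_{\text{irr}})$ from Proposition \ref{compositechar}: on the stratum where both restrictions are irreducible one has $\rho(\ell_i)=-\rho(m)^{-6}$, so $L_i=-M^{-6}$ and $M^pL^q=M^{p-12q}$; on $\mathscr{X}_{\text{irr}}(M_i)$ the reducible restriction forces the corresponding $\ell_i$ to act trivially, $L_i=1$; and on $\mathscr{X}_{\text{nar}}\times_{\mathbb C}\mathscr{X}_{\text{nar}}$ the eigenvalues are again determined by the fixed meridional trace $\pm\sqrt 3$. In each case $M^pL^q$, hence whether $\rho(m^p\ell^q)=I$, is constant in the fiber direction, which parametrizes only the gluing of the two summands and alters neither $\rho(m)$ nor the eigenvalues of $\rho(\ell_i)$ relative to it. The step I expect to require the most care is precisely this eigenvalue bookkeeping: ensuring the two summands' eigenvalues are paired through the \emph{same} eigenvector of $\rho(m)$, and separately treating the parabolic case $\tr\rho(m)=\pm 2$, where $\rho(m)$ has a unique eigenline and one must check that the full equality $\rho(m^p\ell^q)=I$ (not merely $\tr=2$) still reduces to a condition on $\varphi([\rho])$.
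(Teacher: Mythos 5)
There is a genuine gap, and it is exactly at the point you flagged but did not resolve. Your reduction ``it suffices to show that the character $[\rho|_{\partial M}]$ is determined by $\varphi([\rho])$'' is false when $\rho(m)$ is parabolic: a character of $\mathbb{Z}^2$ does not determine whether a given element maps to $I$, since a nontrivial unipotent and the identity have the same trace. This case is not exotic here. The trefoil has irreducible characters with meridional trace $2$ (the point $x=2$, $y=3$ on $\{x^2-y=1\}$), and for a composite representation built from two such, $\rho(m)$ is unipotent and $\neq I$. Your eigenvalue bookkeeping then gives $M=1$, $L_i=-M^{-6}=-1$, so the eigenvalue condition $M^{p-12q}=1$ holds for \emph{every} $p/q$, while the actual matrix equation $\rho(m)^{p-12q}=I$ holds only when $p=12q$. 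So trace/eigenvalue data alone would wrongly place these fibers in every surgery locus; conversely, for $p/q=12$ these parabolic fibers genuinely do lie in $\mathscr{X}_{\text{irr}}(S^3_{12}(3_1\# 3_1))$ (they sweep out part of the surface $S$), so the lemma cannot simply avoid them. No amount of care about choosing common eigenvectors fixes this, because eigenvalues are blind to the unipotent part, which is precisely what the surgery equation constrains at these points.

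The paper closes this in one stroke, and the argument is worth internalizing because it replaces all of your stratum-by-stratum bookkeeping. By Lemma \ref{pullbackquot}, any $[\rho']$ in the fiber $\varphi^{-1}(\varphi([\rho]))$ has the form $[(\rho_1, g^{-1}\rho_2 g)]$ with $g\in\op{Stab}(\rho(m))$. Since $\rho$ is irreducible, $\rho(m)\neq\pm I$, so its centralizer $\op{Stab}(\rho(m))$ is a one-dimensional \emph{abelian} subgroup (a torus, or $\pm$ the unipotent centralizer in the parabolic case). Because $\ell_2$ commutes with $m$, the element $\rho_2(\ell_2)$ lies in this same abelian group, hence $g^{-1}\rho_2(\ell_2)g=\rho_2(\ell_2)$. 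Thus the fiber direction fixes not just the eigenvalues of $\rho(\ell_2)$ relative to $\rho(m)$ but the matrix $\rho_2(\ell_2)$ itself, so the entire peripheral representation (not merely its character) is constant along fibers, and $\rho'(m^p\ell^q)=\rho_1(m)^p(\rho_1(\ell_1)\rho_2(\ell_2))^q=I$ transfers verbatim. This handles the parabolic case uniformly and makes the case analysis via $\rho(\ell_i)=-\rho(m)^{\mp 6}$ unnecessary for this lemma; your proposal needs this commutation argument (or an equivalent) to be a proof.
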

\begin{proof}
A character $[\rho]=[(\rho_1,\rho_2)]\in \mathscr{X}_{\text{irr}}(S^3\backslash (3_1\# 3_1))$ is in the character scheme for the $p/q$ surgery if the surgery equation $\rho( m^p\ell^q)=I$ is satisfied. For a composite knot, the longitude $\ell$ is the product of the two longitudes for the constituent knots. Thus, the surgery equation is
\begin{align*}
\rho_1( m)^p\left(\rho_1(\ell_1)\rho_2(\ell_2)\right)^q=I.
\end{align*}
If $[\rho']\in\varphi^{-1}(\varphi([\rho]))$, then it is of the form $[\rho']=[(\rho_1,g^{-1}\rho_2 g)]$ for some $g\in\op{Stab}(\rho(m))$. For an irreducible representation, we cannot have $\rho( m)=\pm I$. Thus, $\op{Stab}(\rho( m))$ is one-dimensional. Furthermore, since $\ell_2$ and $ m$ commute, we must have $\operatorname{Stab}(\rho( m))\subset \op{Stab}(\rho(\ell_2))$. Therefore, $g^{-1}\rho_2(\ell_2) g=\rho_2(\ell_2)$, verifying the surgery equation for $[\rho']$. 
\end{proof}
Thanks to this lemma, it suffices to describe $\varphi(\mathscr{X}_{\text{irr}}(S^3_{p/q}(3_1\# 3_1)))$. We consider each of the three different types of points in $\mathscr{X}_{\text{irr}}(3_1)\times_{\mathbb{C}} \mathscr{X}_{\text{irr}}(3_1)$ separately.
\begin{lemma}
The locus of characters of $\pi_1(S^3_{p/q}(3_1\# 3_1))$ that restrict to an irreducible in $\pi_1(S^3\backslash K_1)$ and an abelian in $\pi_1(S^3\backslash K_2)$ is $\mathscr{X}_{\text{irr}}(S^3_{p/q}(3_1\# 3_1))\cap \varphi^{-1}(\mathscr{X}_{\text{irr}}(M_i))$. This space consists of $\lambda_{\SLC}(S^3_{p/q}(3_1))$ points.
\end{lemma}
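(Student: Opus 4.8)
The plan is to use the map $\varphi$ from the previous subsections to identify this locus with the irreducible character scheme of $p/q$ surgery on a \emph{single} trefoil, and then read off the count from the definition of the $\SLC$ Casson invariant. By the preceding lemma it suffices to describe the relevant piece of the image $\varphi(\mathscr{X}_{\text{irr}}(S^3_{p/q}(3_1\#3_1)))$, so I would restrict attention to the stratum of the fiber product on which one factor is irreducible and the other abelian.

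First I would observe that the characters restricting to an irreducible on $M_i$ and an abelian on the other summand are exactly those mapping under $\varphi$ into the stratum $\mathscr{X}_{\text{irr}}(M_1)\times_{\mathscr{X}(S^1)}\mathscr{X}_{\text{red}}(M_2)\cong\mathscr{X}_{\text{irr}}(M_1)$, i.e. the copy $\{z=2,\,x^2-y-1=0,\,y\neq 2\}$ in the coordinates of the previous subsection. On this stratum $y\neq 2$ forces the meridional trace $x\neq\pm\sqrt{3}$, hence $\Delta(\mu^2)\neq 0$, so by Proposition \ref{compositechar} the fibers of $\varphi$ over these points are single points. Thus $\varphi$ restricts to an isomorphism from $\mathscr{X}_{\text{irr}}(S^3\backslash(3_1\#3_1))\cap\varphi^{-1}(\mathscr{X}_{\text{irr}}(M_i))$ onto $\mathscr{X}_{\text{irr}}(M_i)\cong\mathscr{X}_{\text{irr}}(3_1)$.

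Next I would impose the surgery relation and watch it collapse. Since the longitude of a knot is null-homologous, an abelian $\rho_2$ sends $\ell_2$ to $I$, so that $\rho(\ell)=\rho_1(\ell_1)\rho_2(\ell_2)=\rho_1(\ell_1)$. The granny surgery equation $\rho(m)^p\rho(\ell)^q=I$ therefore reduces to
\begin{align*}
\rho_1(m)^p\,\rho_1(\ell_1)^q=I,
\end{align*}
which is precisely the defining relation for $p/q$ surgery on the single trefoil. Under the isomorphism of the previous paragraph, the locus in question is thus carried isomorphically onto $\mathscr{X}_{\text{irr}}(S^3_{p/q}(3_1))$. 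Because $3_1$ is a torus knot and hence small, Culler--Shalen theory (as in Corollary \ref{nsl}) guarantees that this scheme is zero-dimensional for $p/q$ away from the finitely many boundary slopes, and by definition its number of points is $\lambda_{\SLC}(S^3_{p/q}(3_1))$.

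The main point requiring care is that $\varphi$ furnishes an isomorphism of schemes, not merely a bijection of points, so that the count is genuinely $\lambda_{\SLC}$ with the correct multiplicities: one must verify that the single-point fibers are reduced and, crucially, that imposing the surgery relation on the one-dimensional stratum $\mathscr{X}_{\text{irr}}(3_1)$ introduces no spurious positive-dimensional component. For the trefoil at admissible slopes this follows from the smoothness of the trefoil surgery character scheme, which is what makes the set-theoretic count agree exactly with the Casson invariant.
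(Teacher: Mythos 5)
Your proof is correct and follows essentially the same route as the paper: identify the locus with the stratum $\mathscr{X}_{\text{irr}}(M_1)\times_{\mathscr{X}(S^1)}\mathscr{X}_{\text{red}}(M_2)\cong\mathscr{X}_{\text{irr}}(M_i)$ over which $\varphi$ has single-point fibers (since $y\neq 2$ forces $\Delta(\mu^2)\neq 0$), use $\rho_2(\ell_2)=I$ for the abelian factor to reduce the surgery equation to the trefoil surgery equation $\rho_1(m^p\ell_1^q)=I$, and read off the count as $\lambda_{\SLC}(S^3_{p/q}(3_1))$. Your closing remarks on scheme-theoretic multiplicities are a reasonable extra precaution but are not part of the paper's (set-theoretic) argument.
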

\begin{proof}
For $([\rho_1],[\rho_2])\in \mathscr{X}_{\text{irr}}(M_1)\subset \mathscr{X}(3_1)\times_{\mathbb{C}} \mathscr{X}(3_1)$, $\rho_2$ is an abelian representation. Thus, $\rho_2(\ell_2)=I$. The surgery equation then reduces to $\rho( m^p\ell_1^q)=I$, which is just the condition for $p/q$ surgery on the trefoil. So, 
\begin{align*}
|\varphi(\mathscr{X}_{\text{irr}}(S^3_{p/q}(3_1\# 3_1)))\cap \mathscr{X}_{\text{irr}}(M_1)|=\lambda_{\SLC}(S^3_{p/q}(3_1)).
\end{align*}
Since the fibers of $\varphi$ over these types of characters are just points, we obtain the result.
\end{proof}
\begin{lemma}
The set of characters that restrict to an irreducible representation on both factors is given by $\mathscr{X}_{\text{irr}}(S^3_{p/q}(3_1))\cap \varphi^{-1}(\mathscr{X}_{\text{irr}}\times_{\mathbb{C}} \mathscr{X}_{\text{irr}})$, which consists of
\begin{equation*}
\begin{cases}
\lambda_{\SLC}(S^3_{p/2q}(3_1))=\frac{1}{2}|p-12q|-\frac{1}{2} \text{ copies of }\mathbb{C}^* & \text{ if } p \text{ is odd}, \  \\
\frac{1}{2}|p-12q|-1\text{ copies of }\mathbb{C}^* & \text{ if } p \text{ is even}, p\neq 12k, \  \\
\frac{1}{2}|p-12q|-3\text{ copies of }\mathbb{C}^* & \text{ if } p=12k, p/q\neq 12,\  \\
\varphi^{-1}(\mathscr{X}_{\text{irr}}\times_{\mathbb{C}} \mathscr{X}_{\text{irr}})  & \text{ if } p/q=12.
\end{cases}
\end{equation*}
\end{lemma}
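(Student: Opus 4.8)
The plan is to reduce the surgery equation to a condition on the meridional eigenvalue alone, then perform a root-of-unity count, keeping careful track of which eigenvalues push the restriction into the non-abelian reducible boundary of the stratum $\mathscr{X}_{\text{irr}}\times_{\mathbb{C}}\mathscr{X}_{\text{irr}}$.

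First, for a character $[\rho]=[(\rho_1,\rho_2)]$ restricting to an irreducible on each trefoil summand, I would record that the right-handed trefoil relation $\rho_i(\ell_i)=-\rho_i(m)^{-6}$ gives $\rho(\ell)=\rho(\ell_1)\rho(\ell_2)=\rho(m)^{-12}$, exactly as in Lemma \ref{compapoly} (the two signs cancel). Since $m$ and $\ell$ commute and $\rho(\ell)$ is a power of $\rho(m)$, the surgery relation $\rho(m^p\ell^q)=I$ collapses to $\rho(m)^{p-12q}=I$. By the preceding lemma this is a condition only on $\varphi([\rho])$ — the fibre direction conjugates $\rho_2$ by an element of $\op{Stab}(\rho(m))$, which fixes $\rho_2(\ell_2)$ — so it suffices to determine which points of $\mathscr{X}_{\text{irr}}\times_{\mathbb{C}}\mathscr{X}_{\text{irr}}$ satisfy it and then take $\varphi^{-1}$, over which the entire fibre $\mathbb{C}^*$ (from Proposition \ref{compositechar}, using $\mu\neq\pm1$) survives.

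Next I would pass to eigenvalues. Irreducibility forces $\rho(m)$ non-central, so its eigenvalue $\mu$ satisfies $\mu\neq\pm1$ and $\mu^{p-12q}=1$. A meridional trace $x=\mu+\mu^{-1}$ with $x\neq\pm2,\pm\sqrt3$ determines a unique genuinely irreducible trefoil character via $y=x^2-1\neq2$, so each such pair $\{\mu,\mu^{-1}\}$ contributes one point of $\mathscr{X}_{\text{irr}}\times_{\mathbb{C}}\mathscr{X}_{\text{irr}}$ and hence one copy of $\mathbb{C}^*$. Counting the $(p-12q)$-th roots of unity, discarding $\mu=\pm1$, and pairing $\mu\leftrightarrow\mu^{-1}$ gives $\tfrac12(|p-12q|-1)$ pairs when $p$ is odd (then $p-12q$ is odd, so only $\mu=1$ must be discarded) and $\tfrac12|p-12q|-1$ pairs when $p$ is even. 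Because this is precisely the surgery equation for $p/2q$ surgery on a single trefoil — there the longitudinal exponent $2q$ is even, so its equation is again $\mu^{p-12q}=1$ — the count agrees with $\lambda_{\SLC}(S^3_{p/2q}(3_1))$, yielding the stated identity in the odd case.

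The subtle point, and the step I expect to be the main obstacle, is the exceptional behaviour when $12\mid p$. I would invoke Lemma \ref{nars}: the non-abelian reducibles of $3_1$ sit at meridional trace $\pm\sqrt3$, that is, where $\mu$ is a primitive $12$-th root of unity. Such $\mu$ satisfies $\mu^{p-12q}=1$ exactly when $12\mid(p-12q)$, equivalently $12\mid p$; the four primitive $12$-th roots form the two trace pairs $x=\pm\sqrt3$. These traces do not lie in the open stratum $\mathscr{X}_{\text{irr}}\times_{\mathbb{C}}\mathscr{X}_{\text{irr}}$ but in its boundary $\mathscr{X}_{\text{nar}}\times_{\mathbb{C}}\mathscr{X}_{\text{nar}}$, and so must be removed, reducing the even count $\tfrac12|p-12q|-1$ to $\tfrac12|p-12q|-3$ when $p=12k$ and $p/q\neq12$; when $12\nmid p$ these roots never appear, so no extra subtraction occurs. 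Finally, when $p/q=12$ one has $p-12q=0$, the relation $\rho(m)^0=I$ is vacuous, and every point of the stratum satisfies the surgery equation, giving all of $\varphi^{-1}(\mathscr{X}_{\text{irr}}\times_{\mathbb{C}}\mathscr{X}_{\text{irr}})$. The care needed lies in cleanly separating ``genuinely irreducible on both factors'' from the limiting reducible traces, and in checking the counts remain non-negative, which holds since $|p-12q|=12|k-q|\geq12$ in the third case.
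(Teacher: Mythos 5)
Your proposal is correct and takes essentially the same route as the paper's proof: both reduce the surgery relation via $\rho_i(\ell_i)=-\rho_i(m)^{-6}$ to the single equation $\rho(m)^{p-12q}=I$, count eigenvalue pairs $\{\mu,\mu^{-1}\}$ excluding $\mu=\pm 1$ (the paper phrases the odd case as literally being the $p/2q$ trefoil surgery equation and quotes $\lambda_{\SLC}$, while you do the root count first and then note the agreement), discard the trace-$\pm\sqrt{3}$ pairs landing in $\mathscr{X}_{\text{nar}}\times_{\mathbb{C}}\mathscr{X}_{\text{nar}}$ when $12\mid p$, and observe the equation trivializes when $p/q=12$. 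The only differences are cosmetic ordering and your slightly more explicit treatment of why parabolic meridians ($\mu=\pm 1$, trace $\pm 2$) contribute nothing.
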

\begin{proof}
For irreducible representations of $\pi_1(S^3\backslash 3_1)$, $\rho(\ell)$ is determined by $\rho( m)$. In fact, we have $\rho(\ell)=-\rho( m)^{-6}$. For a point $\varphi([\rho])=([\rho_1],[\rho_2])\in \mathscr{X}_{\text{irr}}\times_{\mathbb{C}} \mathscr{X}_{\text{irr}}$, we have $\rho_1( m_1)=\rho_2( m_2)$, so that $\rho_1(\ell_1)=\rho_2(\ell_2)$. Thus, 
\begin{align*}
\rho( m^p\ell^q)=\rho_1( m^p\ell_1^{2q}).
\end{align*}
For $p$ odd, the equation $\rho_1( m^p\ell_1^{2q})=I$ is just the defining equation for $p/2q$ surgery on the trefoil. Thus, we obtain $\lambda_{\SLC}(S^3_{p/2q}(3_1))$ points. None of these occur at meridional trace $\pm 2$, so that the fiber of $\varphi$ is a copy of $\mathbb{C}^*$ for all of these points.

For $p$ even, $p\neq 12k$, the surgery equation
\begin{align*}
\rho( m)^{p-12q}=I
\end{align*}
has an even exponent. Thus, we obtain 
\begin{align*}
\frac{1}{2}(|12q-p|-2)
\end{align*}
distinct characters, where the $-2$ term serves to discount the roots at $\rho( m)=\pm I$. For $p=12k, p/q\neq 12$, two of the characters in this count occur at meridional trace $\pm\sqrt{3}$, so we subtract $2$ in this case. Again, all of the fibers of $\varphi$ are $\mathbb{C}^*$.

For $p/q=12$, the surgery equation is trivial, so that every representation of this form provides a representation of the surgery.
\end{proof}
\begin{lemma}\label{grannynonab}
The set of irreducible representations formed from a composite of non-abelian reducible representations is 
\begin{equation*}
\mathscr{X}_{\text{irr}}(S^3_{p/q}(3_1\# 3_1))\cap \varphi^{-1}(\mathscr{X}_{\text{nar}}\times_{\mathbb{C}} \mathscr{X}_{\text{nar}}) =
\begin{cases}
2 \text{ copies of }\mathbb{C}^*\backslash\{1\} & \text{ if } p=12k, \  \\
\emptyset & \text{ else. }  \  \\
\end{cases}
\end{equation*}
\end{lemma}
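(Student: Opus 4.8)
The plan is to combine Lemma \ref{pullbackquot} and Proposition \ref{compositechar} to reduce the statement to evaluating the surgery equation on the fibers of $\varphi$ over the two points of $\mathscr{X}_{\text{nar}}\times_{\mathbb{C}}\mathscr{X}_{\text{nar}}$. Since $\mathscr{X}_{\text{nar}}(3_1)$ consists of the two characters at meridional trace $\pm\sqrt{3}$, the fiber product $\mathscr{X}_{\text{nar}}\times_{\mathbb{C}}\mathscr{X}_{\text{nar}}$ is the pair of points $(\pm\sqrt{3},2,2)$, and by Proposition \ref{compositechar} the irreducible part of each fiber of $\varphi$ over such a point is a copy of $\mathbb{C}^*\backslash\{1\}$. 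For a character $[\rho]=[(\rho_1,\rho_2)]$ restricting to non-abelian reducibles on each summand, the surgery equation reads
$$\rho_1(m)^p\left(\rho_1(\ell_1)\rho_2(\ell_2)\right)^q=I,$$
so the crux is to compute $\rho_i(\ell_i)$ for the non-abelian reducibles and then determine for which $p,q$ this equation holds on the entire fiber.

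The key computation is that $\rho_i(\ell_i)=I$ for each non-abelian reducible restriction. At meridional trace $\pm\sqrt{3}$ the eigenvalues of $\rho_i(m)$ are the primitive twelfth roots of unity $e^{\pm i\pi/6}$, which are distinct; hence $\rho_i(m)$ is regular semisimple and its centralizer is a maximal torus. Since $\ell_i$ commutes with $m$, the matrix $\rho_i(\ell_i)$ lies in this torus and is therefore semisimple. These characters lie in $\overline{\mathscr{X}_{\text{irr}}(3_1)}$, along which $\rho(\ell_i)=-\rho(m)^{-6}$; passing to the limit and using $\rho_i(m)^{-6}=-I$ (as $e^{\mp i\pi}=-1$) yields $\rho_i(\ell_i)=I$. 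I would present this both as a limiting argument and as the statement that the reducible locus restricts to $L=1$ on the boundary torus, matching the factor $L-1$ of the A-polynomial; combined with semisimplicity this again forces $\rho_i(\ell_i)=I$.

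With $\rho_i(\ell_i)=I$, the surgery equation collapses to $\rho_1(m)^p=I$, independently of $q$ and of the fiber coordinate, since conjugating $\rho_2$ by an element of $\operatorname{Stab}(\rho(m))$ leaves $\rho_2(\ell_2)=I$ unchanged. As $\rho_1(m)$ has eigenvalue a primitive twelfth root of unity, the relation $\rho_1(m)^p=I$ holds precisely when $12\mid p$, that is, when $p=12k$. In that case the equation is satisfied on each fiber in its entirety, producing the two copies of $\mathbb{C}^*\backslash\{1\}$ coming from the two base points $(\pm\sqrt{3},2,2)$; otherwise no point of these fibers satisfies the surgery equation and the intersection is empty.

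The main obstacle is the middle step---pinning down $\rho_i(\ell_i)$ for the non-abelian reducibles---since a priori the longitude of a non-abelian reducible could be a nontrivial unipotent. The resolution is the observation that at trace $\pm\sqrt{3}$ the meridian has distinct eigenvalues and is thus semisimple, forcing every element of its centralizer to be semisimple as well; once this is established the remaining steps are immediate.
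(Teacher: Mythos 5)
Your proof is correct and follows essentially the same route as the paper: on the fibers over the two points of $\mathscr{X}_{\text{nar}}\times_{\mathbb{C}}\mathscr{X}_{\text{nar}}$ one has $\rho_i(\ell_i)=I$, so the surgery equation collapses to $\rho(m)^p=I$, which holds precisely when $12\mid p$, yielding the two copies of $\mathbb{C}^*\backslash\{1\}$. In fact you justify the key identity $\rho_i(\ell_i)=I$ more carefully than the paper does (which simply asserts it): your observation that $\rho_i(m)$ is regular semisimple at trace $\pm\sqrt{3}$, so the longitude lies in a maximal torus and cannot be a nontrivial unipotent, is exactly the point needing an argument.
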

\begin{proof}
For $([\rho_1],[\rho_2])\in \mathscr{X}_{\text{nar}}\times_{\mathbb{C}} \mathscr{X}_{\text{nar}}$, we have $\tr(\rho_i( m))=\pm\sqrt{3}$ and $\rho_i(\ell_i)=I$. Thus, the surgery equation becomes $\rho( m)^p=I$. This holds if and only if $p=12k$.
\end{proof}
For the remaining case of $p/q=12$, we have found that the character scheme of 12 surgery on the granny knot, $\mathscr{X}_{\text{irr}}(S^3_{12}(3_1\# 3_1))$, consists of 2 points coming from the irreducible representation in each of the two copies of $\mathscr{X}_{\text{irr}}(S^3_{12}(3_1))$ and the surface
\begin{align*}
S=\varphi^{-1}(\mathscr{X}_{\text{nar}}\times_{\mathbb{C}}\mathscr{X}_{\text{nar}}\cup\mathscr{X}_{\text{irr}}\times_{\mathbb{C}}\mathscr{X}_{\text{irr}}).
\end{align*}
Putting this and the preceding lemmas together, we obtain Proposition \ref{grannyvar}.
\begin{remark}
12 surgery on the granny knot yields a Seifert fiber space fibered over the orbifold base $S^2(2,2,3,3)$ \cite{kalliongis}. Thus,
\begin{align*}
\pi_1(S_{12}^3(3_1\# 3_1))\cong \langle a,b,c \mid a^3=b^3=c^2=(abc)^{-2}\rangle.
\end{align*}
\end{remark}
\subsection{Character scheme for square knot surgeries}
Let $3_1\# 3_1^*$ denote the square knot, a connected sum of two mirror trefoils, and $S^3_{p/q}(3_1\# 3_1^*)$ the $p/q$ surgery. We have the following description of $\mathscr{X}_{\text{irr}}(S^3_{p/q}(3_1\# 3_1^*))$ 
\begin{proposition}\label{squarevar}
The character scheme $\mathscr{X}_{\text{irr}}(S^3_{p/q}(3_1\# 3_1^*))$ consists of $\lambda_{\SLC}(S^3_{p/q}(3_1))+\lambda_{\SLC}(S^3_{-p/q}(3_1))$ points and
\begin{itemize}
\item $\frac{1}{2}|p|-\frac{1}{2}$ copies of $\mathbb{C}^*$ when $p$ is odd,
\item $\frac{1}{2}|p|-1$ copies of $\mathbb{C}^*$ when $p$ is even, $p\neq 12k$,
\item $\frac{1}{2}|p|-3$ copies of $\mathbb{C}^*$ and 2 copies of $\mathbb{C}^*\backslash\{1\}$ when $p=12k\neq 0$,
\item $S=\varphi^{-1}(\mathscr{X}_{\text{nar}}\times_{\mathbb{C}}\mathscr{X}_{\text{nar}}\cup\mathscr{X}_{\text{irr}}\times_{\mathbb{C}}\mathscr{X}_{\text{irr}})$ when $p=0$.
\end{itemize}
\end{proposition}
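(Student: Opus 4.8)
The plan is to run the same analysis used for the granny knot in Proposition \ref{grannyvar}, since $\pi_1(3_1\#3_1^*)\cong\pi_1(3_1\#3_1)$, so the map $\varphi$, the stratification of the fiber product $\mathscr{X}(3_1)\times_{\mathbb{C}}\mathscr{X}(3_1)$, and Proposition \ref{compositechar} all apply unchanged. The only input that changes is the longitude relation on the irreducible locus of the second summand: for the left-handed trefoil one has $\rho(\ell_2)=-\rho(m)^{6}$ rather than $\rho(\ell_2)=-\rho(m)^{-6}$. I would first record the analogue of the reduction lemma used for the granny knot, namely that $\mathscr{X}_{\text{irr}}(S^3_{p/q}(3_1\#3_1^*))=\varphi^{-1}(\varphi(\mathscr{X}_{\text{irr}}(S^3_{p/q}(3_1\#3_1^*))))$. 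Its proof is identical, using only that each $\ell_i$ commutes with $m$, so that $\op{Stab}(\rho(m))\subseteq\op{Stab}(\rho(\ell_2))$ for an irreducible character. This reduces the problem to describing the image $\varphi(\mathscr{X}_{\text{irr}})$ stratum by stratum.

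I would then treat the three strata of $\varphi(\mathscr{X}_{\text{irr}})$ in turn, exactly as in the three lemmas for the granny knot. On the stratum $\mathscr{X}_{\text{irr}}\times_{\mathbb{C}}\mathscr{X}_{\text{irr}}$ a point has $\rho_1(m)=\rho_2(m)$, so $\rho(\ell)=\rho_1(\ell_1)\rho_2(\ell_2)=(-\rho(m)^{-6})(-\rho(m)^{6})=I$; this is the \emph{crucial} computation distinguishing the square knot from the granny knot. Consequently the surgery equation $\rho(m^p\ell^q)=I$ collapses to $\rho(m)^p=I$, so the allowed meridional eigenvalues are exactly the $p$-th roots of unity other than $\pm1$. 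Counting conjugate pairs of such eigenvalues, and recalling from Proposition \ref{compositechar} that the fiber of $\varphi$ over each resulting non-central character is a copy of $\mathbb{C}^*$, gives $\tfrac{1}{2}|p|-\tfrac{1}{2}$ copies when $p$ is odd and $\tfrac{1}{2}|p|-1$ copies when $p$ is even. When $12\mid p$, two of these eigenvalues have trace $\pm\sqrt{3}$ and hence correspond to non-abelian reducible characters, to be accounted for separately below. When $p=0$ the surgery equation is vacuous, so this stratum contributes the entire surface $S$.

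Next, on the mixed strata $\mathscr{X}_{\text{irr}}(M_i)$ the factor on the other summand is abelian, so its longitude is trivial and the surgery equation restricts to ordinary $p/q$ surgery on a single trefoil. The right-handed summand contributes $\lambda_{\SLC}(S^3_{p/q}(3_1))$ points and the mirror summand contributes $\lambda_{\SLC}(S^3_{p/q}(3_1^*))=\lambda_{\SLC}(S^3_{-p/q}(3_1))$ points, since mirroring reverses the surgery slope; by Proposition \ref{compositechar} these fibers are single points because $\Delta(\mu^2)\neq0$ there. This yields the stated point count in all four cases. Finally, on $\mathscr{X}_{\text{nar}}\times_{\mathbb{C}}\mathscr{X}_{\text{nar}}$ one has $\tr(\rho_i(m))=\pm\sqrt{3}$ and $\rho_i(\ell_i)=I$, so once more the surgery equation reads $\rho(m)^p=I$, solvable precisely when $12\mid p$; together with $p\neq0$ this produces the two copies of $\mathbb{C}^*\setminus\{1\}$ from the fiber description in Proposition \ref{compositechar}. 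Assembling the three strata, and removing from the irreducible–irreducible count the two $\mathbb{C}^*$ fibers at trace $\pm\sqrt{3}$ when $12\mid p$, gives the four cases of the proposition.

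The part requiring the most care is the bookkeeping at the distinguished eigenvalues of trace $\pm\sqrt{3}$: I must verify that exactly these two $\mathbb{C}^*$ fibers from the irreducible–irreducible stratum are the ones replaced, when $12\mid p$, by the $\mathbb{C}^*\setminus\{1\}$ fibers coming from $\mathscr{X}_{\text{nar}}\times_{\mathbb{C}}\mathscr{X}_{\text{nar}}$, and that the degenerate slope $p=0$ (yielding the full surface $S$) together with the appearance of primitive twelfth roots of unity exactly when $12\mid p$ are handled consistently and without double-counting the boundary characters. The underlying geometric computations are routine given Proposition \ref{compositechar}; the subtlety lies entirely in matching the strata of the fiber product to the non-abelian reducible locus.
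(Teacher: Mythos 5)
Your proposal is correct and follows essentially the same route as the paper: reduce via the $\varphi$-saturation lemma, observe that the opposite chiralities give $\rho(\ell)=\rho(\ell_1\ell_2)=(-\rho(m)^{-6})(-\rho(m)^{6})=I$ so the surgery equation collapses to $\rho(m)^p=I$ on the irreducible--irreducible stratum, count conjugate pairs of roots of unity, use $S^3_{p/q}(3_1^*)\cong S^3_{-p/q}(3_1)$ (mirroring) on the mixed strata, and handle $p=12k$ via the non-abelian-reducible stratum exactly as in Lemma \ref{grannynonab}, with $p=0$ giving the full surface $S$. The paper's proof is simply a terser version of this, citing the granny-knot argument for everything except the longitude cancellation and the Casson-invariant identity.
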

\begin{proof}
The proof is analogous to that of Proposition \ref{grannyvar}. The essential difference is that we also need to consider the representations of the left-handed trefoil. Since $S^3_{p/q}(3_1)\cong S^3_{-p/q}(3_1^*)$, we can relate the Casson invariants by
\begin{align*}
\lambda_{\SLC}(S^3_{p/q}(3_1))= \lambda_{\SLC} (S^3_{-p/q}(3_1^*)).
\end{align*}
Thus, the intersection of $\mathscr{X}_{\text{irr}}(S^3_{p/q}(3_1\# 3_1^*))$ with the two copies of $\mathscr{X}_{\text{irr}}(3_1)$ give contributions of $\lambda_{\SLC}(S^3_{p/q}(3_1))$ and $\lambda_{\SLC}(S^3_{-p/q}(3_1))$ points, depending on whether the copy of $\mathscr{X}_{\text{irr}}(3_1)$ corresponds to the right or left-handed trefoil.

For irreducible representations of the right-handed trefoil, we have $\rho_1(\ell_1)=-\rho_1(m)^{-6}$, whereas for the left-handed trefoil we have $\rho_2(\ell_2)=-\rho_2(m)^6$. So, for a representation of the composite that restricts to irreducibles on either factor, we find that $\rho(\ell)=\rho(\ell_1\ell_2)=I$. The equation for $p/q$ surgery reduces to
\begin{align*}
\rho(m)^p=I.
\end{align*}
Throwing away the solutions $\rho(m)=\pm I$ and counting solutions up to conjugacy (i.e. dividing by the equivalence $\rho(m)\sim\rho(m)^{-1}$), we find $\frac{1}{2}|p|-\frac{1}{2}$ solutions for $p$ odd, and $\frac{1}{2}|p|-1$ solutions for $p$ even, $p\neq 12k$. For $p=12k\neq 0$, we omit the two solutions with $\tr(\rho(m))=\pm\sqrt{3}$, as these correspond to non-abelian reducible representations rather than irreducibles. The case of irreducibles formed from the composite of non-abelian reducible representations, which only occurs when $p=12k$, is the same as in Lemma \ref{grannynonab}. When $p=0$, the surgery equation is trivial, and we have the same situation as for $p=12$ for the granny knot.
\end{proof}

\begin{remark}
0 surgery on the square knot yields a Seifert fiber space fibered over the orbifold base $S^2(-2,2,3,3)$ \cite{kalliongis}. Thus,
\begin{align*}
\pi_1(S_{0}^3(3_1\# 3_1^*))\cong \langle a,b,c \mid a^3=b^3=c^2=(abc)^{2}\rangle.
\end{align*}
\end{remark}

\subsection{Smoothness of the Character Schemes}

\begin{proposition}
Let $3_1\# 3_1$ and $3_1\# 3_1^*$ denote the granny and square knots, respectively. The schemes $\mathscr{X}_{\text{irr}}(S^3_{p/q}(3_1\# 3_1))$ and $\mathscr{X}_{\text{irr}}(S^3_{p/q}(3_1\# 3_1^*))$ are smooth schemes for all $p$ and $q$. 
\end{proposition}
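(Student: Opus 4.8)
The plan is to realize $\mathscr{X}_{\text{irr}}(S^3_{p/q}(3_1\#3_1^\circ))$ as the closed subscheme of the irreducible character scheme of the knot exterior cut out by the Dehn filling relation $\rho(m^p\ell^q)=I$, and then to check stratum by stratum that this relation is imposed reducedly and transversally. First I would record that on the irreducible locus the exterior character scheme is already smooth: by Propositions \ref{components} and \ref{cubic} its components are the two lines $\mathscr{X}_{i,\text{irr}}$ together with the open cubic surface $S=\overline{S}\setminus\mathscr{X}_{\text{nar}}$, and the only singularities of $\overline{S}$, namely its two $A_1$ points, are exactly the non-abelian reducible characters $\mathscr{X}_{\text{nar}}$, hence absent from the irreducible locus. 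Since the closures of these components meet only along $\mathscr{X}_{\text{nar}}\subset\mathscr{X}_{\text{red}}$, the three pieces are pairwise disjoint over the irreducible locus; this both lets me treat each filling separately and ensures no singularities are created by components crossing.

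For the isolated points, which lie over the strata $\mathscr{X}_{\text{irr}}(M_i)$, the neighbouring trefoil summand is abelian, so its longitude acts trivially and, using $\rho(\ell_i)=-\rho(m)^{\mp6}$ on the trefoil, the filling relation reduces to $(-1)^q\mu^{p\mp 6q}=1$ in the meridional eigenvalue $\mu$. For $p\mp 6q\neq 0$ this is a separable equation whose roots are roots of unity; the irreducible characters correspond to the roots with $\mu\neq\pm1$ (the values $\mu=\pm1$ giving central, reducible representations that are dropped). Pulling back along the double cover $\mu\mapsto\operatorname{tr}\rho(m)$, which is \'etale away from $\mu=\pm1$, shows the resulting $0$-dimensional scheme is reduced. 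Combined with Lemma \ref{pullbackquot}, which identifies the $\varphi$-fiber here with a single reduced point, this gives smoothness of the isolated points for all $p,q$.

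The essential point is the smoothness of the curves and surface sitting inside $S$. On $S$ the longitude satisfies $\rho(\ell)=\rho(m)^{-12}$ for the granny knot and $\rho(\ell)=I$ for the square knot, so the filling relation only constrains the meridional eigenvalue, giving $\mu^{N}=1$ with $N=p-12q$, respectively $N=p$. When $N=0$ (the slopes $p/q=12$, resp. $p=0$) the relation is vacuous on $S$, so the whole smooth surface $S$ is a component and is smooth. Otherwise the relevant loci are the fibers of the coordinate $x=\operatorname{tr}\rho(ab)$ on the cubic over the values $\operatorname{tr}\rho(m)=\mu+\mu^{-1}$, where one computes $\operatorname{tr}\rho(m)=-x$. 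I would then check directly that fixing $x$ turns $f=x^2+y^2+z^2+xyz-z-2$ into a conic in $(y,z)$ whose discriminant equals $-\tfrac14\big((\operatorname{tr}\rho(m))^2-3\big)^2$; hence the fiber is a smooth conic, isomorphic to $\mathbb{C}^*$, except over $\operatorname{tr}\rho(m)=\pm\sqrt3$, which are exactly the two non-abelian reducible traces. Since the singular point of such a degenerate fiber is precisely the $A_1$ point removed in passing to $S$, every fiber appearing after surgery is smooth, and because the critical points of $x|_S$ are exactly these removed nodes, $dx\neq0$ along each fiber, so the filling relation cuts out a reduced smooth divisor. This reproduces the $\mathbb{C}^*$ and $\mathbb{C}^*\setminus\{1\}$ pieces of Propositions \ref{grannyvar} and \ref{squarevar} and shows they are smooth.

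Assembling the three disjoint strata, $\mathscr{X}_{\text{irr}}(S^3_{p/q}(3_1\#3_1^\circ))$ is a disjoint union of reduced points, smooth curves, and (at the degenerate slope) the smooth surface $S$, hence a smooth scheme for all $p$ and $q$. The main obstacle is the third paragraph's bookkeeping: confirming that the filling relation meets the cubic surface transversally and reducedly. This comes down to the discriminant computation isolating $\pm\sqrt3$ as the only degenerate traces, and to verifying that these coincide with the removed $A_1$ singularities rather than introducing new critical values of $x|_S$.
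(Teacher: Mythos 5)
Your strategy is sound and it is genuinely different from the paper's. The paper proves the proposition by deformation theory: it matches the local dimensions of the components found in Propositions \ref{grannyvar} and \ref{squarevar} against the Zariski tangent space $\dim H^1(\pi_1;\operatorname{ad}\rho)$, computed via two Mayer--Vietoris sequences --- Lemma \ref{h1exterior} for the exterior (splitting along the meridional annulus of the connected sum) and Lemma \ref{h1surgery} for the Dehn filling, where the rank of the gluing map $f=(dr\oplus ds)_{[\rho]}$ is determined by comparing the tangent lines of the surgery curve $\{M^pL^q=1\}$ with the branches of the $A$-polynomial curve. You instead cut the surgery scheme out of the exterior's irreducible character scheme by the filling relation and verify transversality directly on the explicit model: the relation confines the meridional trace to the simple roots of a separable polynomial, and the trace function has no critical points on $S$ because its only singular fibers occur over trace $\pm\sqrt{3}$, singular exactly at the removed $A_1$ points; your discriminant $-\tfrac14\bigl((\tr\rho(m))^2-3\bigr)^2$ is correct. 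The longitude identities $\rho(\ell_i)=-\rho(m)^{\mp6}$ (and $\rho(\ell_i)=I$ on non-abelian reducibles) are the common input: you use them to reduce the filling relation to an eigenvalue equation, the paper uses them, via the $A$-polynomial, to compute a rank. Your route is more elementary and self-contained; the paper's is the one that survives when no explicit model of the character variety is available.

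There is, however, one genuine gap. The proposition is about \emph{schemes}, and your opening step --- smoothness of the exterior's irreducible character scheme ``by Propositions \ref{components} and \ref{cubic}'' --- reads a scheme-theoretic conclusion off of variety-level statements. Those propositions identify components and their reduced structures; they do not by themselves exclude that $\mathscr{X}(\Gamma)$ is non-reduced along $S$ or along $\mathscr{X}_{i,\operatorname{irr}}$ (i.e.\ that the relations of $\Gamma$ cut out the cubic with multiplicity). If it were non-reduced, your transversality argument would prove smoothness only of the underlying reduced variety of the surgered character scheme, not of the scheme, which is the whole content of the statement. This is exactly what the paper's Lemma \ref{h1exterior} supplies ($\dim H^1=2$ along $S$, $=1$ along the curves, matching local dimensions). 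Alternatively, you can close the gap by hand: near a point where both restrictions are non-abelian, Cayley--Hamilton gives $A^3-B^2=(t_A-1)\bigl[(t_A+1)A-I\bigr]-t_B B$ with $t_A=\tr(A)$, $t_B=\tr(B)$; since $(t_A+1)A-I$ and $B$ are invertible there and $\bigl[(t_A+1)A-I\bigr]B^{-1}$ is non-scalar, the relation ideal equals $(t_A-1,t_B)$, so the scheme is the reduced Fricke--Klein locus. The same care is needed (and the same fix works) for your claim that the filling relation is ``imposed reducedly'': writing $\rho(m)^N-I=S_{N-1}(t)\rho(m)-(1+S_{N-2}(t))I$ with $t=\tr\rho(m)$ and $S_k$ the trace polynomials, at a non-central meridian the filling ideal is $(S_{N-1}(t),\,1+S_{N-2}(t))$, which near each relevant root is $(t-t_0)$ with $t_0$ simple; this is what licenses your separability/\'etale double-cover argument.

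A minor point: your claim to ``reproduce the $\mathbb{C}^*\setminus\{1\}$ pieces'' is not what your geometry actually yields. The fiber of the meridional trace over $\pm\sqrt{3}$ in $\overline{S}$ is a nodal conic, i.e.\ two distinct lines crossing at the $A_1$ point, so in $S$ it is two disjoint copies of $\mathbb{C}^*$ (one for each way of pairing the two non-conjugate non-abelian reducible lifts on the two summands), rather than a single $\mathbb{C}^*\setminus\{1\}$ as stated in Propositions \ref{compositechar} and \ref{grannyvar}. Both answers are smooth curves, so this does not affect the proposition, but it is a point where your (correct) computation and the paper's stated description diverge.
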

\begin{proof}
The sets of complex points of these schemes were computed in the previous section. They consisted of components of dimensions zero, one, and, in the cases of $S^3_{12}(3_1\# 3_1)$ and $S^3_0(3_1\# 3_1^*)$, two. To establish the smoothness of the character scheme near some irreducible representation $\rho$, we must show that the local dimension of the set of complex points at $\rho$ equals the dimension of the tangent space to the scheme at $\rho$. Recall that for an irreducible representation $\rho$ the tangent space is computed by $T_{[\rho]}\mathscr{X}_{\text{irr}}(\Gamma)=H^1(\Gamma;\operatorname{ad}\rho)$. Thus, the proposition follows from the calculation of these $H^1$ groups in Lemma \ref{h1surgery} below, which we prove after two preliminary lemmas.
\end{proof}

\begin{lemma}\label{h1exterior}
Let $\rho$ be an irreducible representation of $\pi_1(S^3\backslash(3_1\#3_1^\circ))$ (where $3_1\#3_1^\circ$ is either the square or granny knot, which have isomorphic fundamental groups). Let $\rho_1$ and $\rho_2$ be the restrictions of $\rho$ to each of the two copies of $\pi_1(S^3\backslash 3_1)$. Then,
\begin{equation*}
\dim H^1(\pi_1(S^3\backslash(3_1\# 3_1^\circ));\operatorname{ad}\rho)=
\begin{cases}
 2 & \text{ if neither of the } \rho_i \text{ are abelian,}  \  \\
 1 & \text{ if either of the } \rho_i \text{ are abelian.}
\end{cases}
\end{equation*}
\end{lemma}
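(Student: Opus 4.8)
The plan is to compute $H^1(\pi_1(S^3\backslash(3_1\#3_1^\circ));\operatorname{ad}\rho)$ via a Mayer--Vietoris argument coming from the decomposition $M = M_1 \cup_{T} M_2$ induced by the connected sum, where $T = S^1$ is the meridional circle along which the two trefoil exteriors are glued. The relevant long exact sequence in cohomology with coefficients in $\operatorname{ad}\rho$ reads
\begin{equation*}
\cdots \to H^0(M_1) \oplus H^0(M_2) \to H^0(S^1) \to H^1(M) \to H^1(M_1) \oplus H^1(M_2) \to H^1(S^1) \to \cdots,
\end{equation*}
where all coefficients are $\operatorname{ad}\rho$ (restricted appropriately), and I have written $M=S^3\backslash(3_1\#3_1^\circ)$, $M_i = S^3\backslash 3_1$. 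First I would record the input dimensions: for the $3$-dimensional adjoint representation, $\dim H^0(X;\operatorname{ad}\rho)$ equals the dimension of the space of invariants, which is $0$ for an irreducible restriction, $1$ for a non-central reducible (abelian) restriction; and $H^0(S^1;\operatorname{ad}\rho)$ is governed by the centralizer of $\rho(m)$, which since $\rho$ is irreducible forces $\rho(m)\neq \pm I$, giving $\dim H^0(S^1) = 1$. The dimensions $H^1(M_i;\operatorname{ad}\rho)$ are the tangent space dimensions to the trefoil character variety, which are $1$ when $\rho_i$ is irreducible and likewise $1$ along the one-dimensional reducible locus $\mathscr{X}_{\mathrm{red}}\cong\mathbb{C}$; and $\dim H^1(S^1;\operatorname{ad}\rho) = 1$ by the same centralizer computation together with Poincar\'e duality on the circle.

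The core of the argument is then a case analysis driven entirely by how many of the $\rho_i$ are irreducible, tracking the first few terms of the sequence. When neither $\rho_i$ is abelian, both $H^0(M_i)$ vanish, so the connecting map $H^0(S^1)\to H^1(M)$ is injective with one-dimensional image, and the contributions from the two $H^1(M_i)$ (each one-dimensional) feed into the one-dimensional $H^1(S^1)$; a dimension count, being careful about the rank of the restriction map $H^1(M_1)\oplus H^1(M_2)\to H^1(S^1)$, should yield $\dim H^1(M)=2$. When exactly one $\rho_i$ is abelian, one of the $H^0(M_i)$ becomes one-dimensional and maps nontrivially to $H^0(S^1)$ (since the abelian restriction still has $\rho_i(m)\neq \pm I$), which kills the connecting map and removes one dimension from $H^1(M)$, giving $\dim H^1(M)=1$.

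The main obstacle will be pinning down the exact ranks of the two restriction maps appearing in the sequence---namely $H^0(M_1)\oplus H^0(M_2)\to H^0(S^1)$ and $H^1(M_1)\oplus H^1(M_2)\to H^1(S^1)$---rather than merely their source and target dimensions; a naive alternating-sum Euler characteristic computation is insufficient because it does not distinguish the two stated cases, so the argument genuinely needs these rank determinations. I would handle the $H^0$ restriction map by explicitly identifying $H^0(X;\operatorname{ad}\rho)$ with the Lie algebra of the centralizer of $\rho(\pi_1 X)$ and observing that an abelian non-central $\rho_i$ has centralizer a maximal torus containing $\rho(m)$, so the restriction to $H^0(S^1)$ is an isomorphism onto its one-dimensional target. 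For the $H^1$ restriction map, I would use the fact, available from the trefoil computation, that the image of the derivative of the longitude--meridian restriction is controlled by whether $\rho_i$ lies on the irreducible or reducible component, together with the compatibility condition that $\rho_1$ and $\rho_2$ agree on the shared meridian; careful bookkeeping here, as opposed to any deep new idea, is what makes the two cases come out as claimed.
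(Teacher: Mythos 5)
Your overall strategy coincides with the paper's: the Mayer--Vietoris sequence for $M=M_1\cup_{S^1}M_2$ with $\operatorname{ad}\rho$ coefficients, the identifications $H^0(S^1;\operatorname{ad}\rho)\cong H^1(S^1;\operatorname{ad}\rho)\cong\mathbb{C}$ from the non-central meridional restriction, and a rank analysis of the two restriction maps, with the $H^1$ rank read off from the geometry of the trefoil character curve. In the two generic situations (both $\rho_i$ irreducible; one abelian and one irreducible) your bookkeeping agrees with the paper and is correct.

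The genuine gap is that ``neither $\rho_i$ abelian'' is not the same as ``both $\rho_i$ irreducible'': since the meridional traces must agree and irreducible trefoil characters have trace $\neq\pm\sqrt{3}$, the remaining subcase is that both $\rho_i$ are \emph{non-abelian reducible} representations (image in a Borel subgroup, meridional trace $\pm\sqrt{3}$). This subcase genuinely occurs and genuinely matters: such $\rho$ are exactly the irreducible composites lying over $\mathscr{X}_{\text{nar}}\times_{\mathbb{C}}\mathscr{X}_{\text{nar}}$, i.e.\ part of the two-dimensional component $S$, and it is precisely the case invoked later to prove smoothness at the exceptional surgeries ($p/q=12$ on the granny knot, $p=0$ on the square knot). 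For these representations your recorded inputs break down. First, your $H^0$ table (``$1$ for a non-central reducible'') is wrong here: a non-abelian reducible has centralizer $\{\pm I\}$, so $H^0(M_i;\operatorname{ad}\rho_i)=0$; your case analysis asserts this vanishing, but it contradicts your stated table, and the correct dichotomy is abelian vs.\ non-abelian, not reducible vs.\ irreducible. Second, and more seriously, your justification that $\dim H^1(M_i;\operatorname{ad}\rho_i)=1$ --- ``the tangent space dimension to the trefoil character variety'' --- fails at exactly these points: the two non-abelian-reducible characters $(\pm\sqrt{3},2)$ are the nodes of the plane curve $(y-2)(x^2-y-1)=0$, where both components meet and the Zariski tangent space is two-dimensional; moreover $H^1(\Gamma;\operatorname{ad}\rho)$ computes a character-variety tangent space only at irreducible $\rho$. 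The same objection applies to your determination of the rank of $H^1(M_1)\oplus H^1(M_2)\to H^1(S^1)$, which you phrase as the derivative of a restriction map between character varieties. The paper handles this subcase by a separate direct computation with the finitely many (up to conjugacy) non-abelian reducible representations of the trefoil group, obtaining $\dim H^1(M_i;\operatorname{ad}\rho_i)=1$; some such independent argument is required in your write-up, since the character-variety heuristic you invoke would return the wrong number ($2$ rather than $1$) at these points.
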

\begin{proof}
We can compute $H^1(\pi_1(S^3\backslash (3_1\#3_1^\circ));\operatorname{ad}\rho)$ (we will suppress the $\pi_1$ from this notation without confusion, as all spaces in consideration are aspherical) from the following portion of the Mayer-Vietoris sequence:
\begin{align}\label{MV1}
\begin{split}
    0&\to H^0(S^3\backslash 3_1;\operatorname{ad}\rho_1)\oplus H^0(S^3\backslash 3_1;\operatorname{ad}\rho_2)\to H^0(S^1\operatorname{ad}\rho)\to H^1(S^3\backslash (3_1\#3_1^\circ)\operatorname{ad}\rho) \to 
\\  & \to H^1(S^3\backslash 3_1;\operatorname{ad}\rho_1)\oplus H^1(S^3\backslash 3_1;\operatorname{ad}\rho_2) \to H^1(S^1;\operatorname{ad}\rho)\to\dots
\end{split}
\end{align}
The $\rho_i$ are the restrictions of $\rho$ to the two copies of $S^3\backslash 3_1$, and the $S^1$ refers to the meridional annulus along which the connected sum operation is performed. Technically, $\rho$ restricts to the complement of the meridional annulus inside of $S^3\backslash 3_1$, but since removing a subset of the boundary of a manifold does not change its homotopy type, this is homotopy equivalent to $S^3\backslash 3_1$ so we ignore the distinction.

Observe that $H^1(S^1;\operatorname{ad}\rho)\cong H^0(S^1;\operatorname{ad}\rho)\cong \mathbb{C}$. The first isomorphism follows from Poincar\' {e} duality. The second follows from the fact that since $\rho$ is an irreducible representation of $\pi_1(S^3\backslash (3_1\#3_1^\circ))$, it restricts to a non-central abelian representation on the meridian and the invariants of such a representation are a one-dimensional subspace of $\operatorname{ad}\rho$.

The last map in \eqref{MV1} is the sum of two maps, each of the form $H^1(S^3\backslash 3_1;\operatorname{ad}\rho_i) \to H^1(S^1;\operatorname{ad}\rho)$. When $\rho_i$ is irreducible, this is the derivative at $[\rho_i]$ of the natural map $\mathscr{X}_{\text{irr}}(S^3\backslash 3_1;\operatorname{ad}\rho_i)\to \mathscr{X}(S^1;\operatorname{ad}\rho)$, where $S^1$ refers to the meridional circle. From our description of $\mathscr{X}_{\text{irr}}(S^3\backslash 3_1)$ as a plane curve, we observe that the meridional trace map is non-singular at all points. Thus, the map on tangent spaces is surjective.

We now consider the case when the $\rho_i$ are both irreducible or both non-abelian reducibles. In this case, $H^0(S^3\backslash 3_1;\operatorname{ad}\rho_i)=0$. When $\rho_i$ is an irreducible representation, we observe that $\dim H^1(S^3\backslash 3_1;\operatorname{ad}\rho_i)=1$ because the character scheme is smooth of dimension 1. When $\rho_i$ is a non-abelian reducible, we can compute $\dim H^1(S^3\backslash 3_1;\operatorname{ad}\rho_i)=1$ directly, as there are only finitely many non-abelian reducible representations up to conjugacy. From this data, \eqref{MV1} yields $\dim H^1(S^3\backslash (3_1\#3_1^\circ);\operatorname{ad}\rho) =2$.

When $\rho_1$ is abelian and $\rho_2$ is irreducible, $H^0(S^3\backslash 3_1;\operatorname{ad}\rho_2)=0$ and the map $H^0(S^3\backslash 3_1;\operatorname{ad}\rho_1)\to H^0(S^1;\operatorname{ad}\rho)$ at the start of \eqref{MV1} is an isomorphism. For an abelian representation, $\dim H^1(S^3\backslash 3_1;\operatorname{ad}\rho_1)=1$. Thus, we compute $\dim H^1(S^3\backslash (3_1\#3_1^\circ);\operatorname{ad}\rho)=1$.
\end{proof}

\begin{lemma}\label{h1surgery}
Let $3_1\# 3_1$ and $3_1\# 3_1^*$ denote the granny and square knots (and let $3_1\#3_1^{\circ}$ denote either). Let $\rho$ be an irreducible representation of $\pi_1(S_{p/q}^3(3_1\#3_1^{\circ}))$. Let $\rho_1$ and $\rho_2$ be the restrictions of $\rho$ to each of the two copies of $\pi_1(S^3\backslash 3_1)$. Then,
\begin{equation*}
\dim H^1(\pi_1(S_{p/q}^3(3_1\#3_1^{\circ}));\operatorname{ad}\rho)=
\begin{cases}
 2 & \text{ if both of the } \rho_i \text{ are non-abelian and } $p/q=12$ \text{ for the granny knot or } \\ & $p/q=0$ \text{ for the square knot,}  \  \\
 1 & \text{ if both of the } \rho_i \text{ are irreducible and we are not in the above case,}  \  \\
 0 & \text{ if either of the } \rho_i \text{ are abelian.}
\end{cases}
\end{equation*}
\end{lemma}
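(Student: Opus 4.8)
The plan is to feed Lemma \ref{h1exterior} into the Mayer--Vietoris sequence for the Dehn filling. Write $Y=S^3_{p/q}(3_1\#3_1^\circ)=M\cup_{T^2}V$, where $M=S^3\backslash N(3_1\#3_1^\circ)$ is the knot exterior, $T^2=\partial M$, and $V$ is the attached solid torus whose meridian is glued to the surgery slope $m^p\ell^q$. Throughout, cohomology is taken with coefficients in $\operatorname{ad}\rho$ (restricted as appropriate), and I abbreviate $h^i_X=\dim H^i(X;\operatorname{ad}\rho)$. Because $\rho$ is irreducible, $h^0_Y=h^0_M=0$. Since the meridian normally generates $\pi_1(M)$, irreducibility also forces $\rho(m)\neq\pm I$, so $\rho|_{T^2}$ is a non-central abelian representation; for such a representation $H^*(T^2;\operatorname{ad}\rho)=(\mathbb{C},\mathbb{C}^2,\mathbb{C})$ in degrees $0,1,2$ (the outer terms being the Cartan, matched by Poincar\'e duality, and the middle term forced by $\chi(T^2)=0$). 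A short computation using the surgery relation $\rho(m^p\ell^q)=I$ together with $\gcd(p,q)=1$ shows that the core of $V$ cannot map to $\pm I$ without forcing $\rho(m)=\pm I$; hence $\rho|_V$ is non-central and $H^*(V;\operatorname{ad}\rho)=(\mathbb{C},\mathbb{C})$ in degrees $0,1$.

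With these inputs the low-degree part of the sequence
\begin{equation*}
0\to H^0(M)\oplus H^0(V)\to H^0(T^2)\to H^1(Y)\to H^1(M)\oplus H^1(V)\xrightarrow{\,g\,} H^1(T^2)\to\cdots
\end{equation*}
collapses: the restriction $H^0(V)\to H^0(T^2)$ is the identification of Cartans, hence an isomorphism, so the connecting map into $H^1(Y)$ vanishes and $H^1(Y)\cong\ker g$, where $g$ is the difference of the two restrictions. Writing $\Lambda_M=\operatorname{im}\!\big(H^1(M)\to H^1(T^2)\big)$ and $\Lambda_V=\operatorname{im}\!\big(H^1(V)\to H^1(T^2)\big)$, each is a Lagrangian line in the cup-product symplectic plane $H^1(T^2;\operatorname{ad}\rho)$ by the half-lives-half-dies principle, so $\dim\Lambda_M=\dim\Lambda_V=1$, and a dimension count of $\ker g$ gives
\begin{equation*}
h^1_Y=h^1_M-1+\dim(\Lambda_M\cap\Lambda_V).
\end{equation*}
Lemma \ref{h1exterior} supplies $h^1_M$: it equals $1$ when some $\rho_i$ is abelian and $2$ when neither is. It therefore remains only to decide, in each case, whether the two Lagrangians meet transversally ($\dim(\Lambda_M\cap\Lambda_V)=0$) or coincide ($=1$).

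To determine this I identify $\Lambda_V$ with the tangent to the surgery-slope curve $\{M^pL^q=1\}$ and $\Lambda_M$ with the tangent to the image of $\mathscr{X}_{\text{irr}}(M)$ in $\mathscr{X}(T^2)$; the latter is legitimate because $\mathscr{X}_{\text{irr}}(M)$ is smooth (Section 5), and the image curves were computed in Section 6 via Lemma \ref{compapoly}. In eigenvalue coordinates the image is $L=-M^{-6}$ when one $\rho_i$ is abelian (a single trefoil $A$-polynomial factor), $L=M^{-12}$ for the granny knot with both $\rho_i$ non-abelian, and $L=1$ for the square knot with both $\rho_i$ non-abelian. In logarithmic coordinates these lines have directions $(1,-6)$, $(1,-12)$ and $(1,0)$, while the surgery slope has direction $(q,-p)$. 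Computing the relevant determinants, the intersection is transverse unless $p=\pm 6q$ (abelian case), $p=12q$ (granny, both non-abelian), or $p=0$ (square, both non-abelian). In the abelian case the surgery equation reduces to $(-1)^qM^{\,p\mp 6q}=1$, which has no solution with $\gcd(p,q)=1$ at the tangency slope, so transversality holds at every actual point and $h^1_Y=1-1+0=0$. At the non-special slopes of the remaining cases the Lagrangians are transverse and $h^1_Y=2-1+0=1$, while exactly at $p/q=12$ (granny) and $p/q=0$ (square) the lines coincide, $\dim(\Lambda_M\cap\Lambda_V)=1$, and $h^1_Y=2$ --- precisely the slopes at which the two-dimensional component $S$ survives the filling. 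This reproduces the three cases of the statement (and shows the $\mathbb{C}^*\backslash\{1\}$ families at $p=12k$, $p/q\neq 12$ fall under $h^1_Y=1$ as well).

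The main obstacle is this last step: correctly matching $\Lambda_M$ and $\Lambda_V$ to the explicit curves of Section 6, ruling out accidental tangencies, and verifying that the non-transverse slopes $p=\pm 6q$ carry no surgery solutions. A secondary technical point is that $\rho(m)$ might be parabolic rather than regular semisimple, in which case the cup-product pairing on $H^1(T^2;\operatorname{ad}\rho)$ can degenerate and the clean Lagrangian dimension count must be re-examined; I would dispose of this either by checking directly that the meridian eigenvalue is always $\neq\pm 1$ at the counted characters, or by a local computation at such points. Granting these, the displayed formula together with Lemma \ref{h1exterior} yields the stated values of $\dim H^1$.
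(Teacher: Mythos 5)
Your proposal follows essentially the same route as the paper's proof: the same Mayer--Vietoris sequence for the Dehn filling, the same computations of $H^*(T^2;\operatorname{ad}\rho)$ and $H^*(D^2\times S^1;\operatorname{ad}\rho)$, the identification of the boundary map with the derivatives of the restriction maps to $\mathscr{X}(T^2)$, the half-lives-half-dies rank count, and the tangency analysis of the surgery curve $\{M^pL^q=1\}$ against the $A$-polynomial curves from Section 6; your formula $h^1_Y=h^1_M-1+\dim(\Lambda_M\cap\Lambda_V)$ is just a rephrasing of the paper's $\dim H^1(Y)=\dim H^1(M)+1-\operatorname{rank}(f)$. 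Your explicit verification that the tangency slopes $p=\pm 6q$ in the abelian case carry no actual characters, and your flagging of the parabolic-meridian subtlety, are points the paper passes over silently, but they refine rather than alter the argument.
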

\begin{proof}
We can compute $H^1(S^3_{p/q}(3_1\#3_1^{\circ});\operatorname{ad}\rho)$ from the following Mayer-Vietoris sequence:
\begin{align}\label{MV2}
\dots \overset{0}\rightarrow H^1(S^3_{p/q}(3_1\#3_1^{\circ});\operatorname{ad}\rho)\to H^1(S^3\backslash (3_1\#3_1^{\circ});\operatorname{ad}\rho)\oplus H^1(D^2\times S^1, \operatorname{ad}\rho) \overset{f}\rightarrow H^1(T^2;\operatorname{ad}\rho)\to\dots
\end{align}	
Since $\rho$ must restrict to a non-central abelian representation on the boundary torus, we have $H^2(T^2;\operatorname{ad}\rho)\cong H^0(T^2;\operatorname{ad}\rho)\cong\mathbb{C}$. From the Euler characteristic, we compute $\dim H^1(T^2;\operatorname{ad}\rho)=2$. Similarly, $\rho$ restricts to a non-central abelian representation on the solid torus (if it sent the core of the solid torus to a central element, then in fact $\rho$ would be central on the entire boundary torus, and in particular on the meridian). So, $\dim H^1(D^2\times S^1;\operatorname{ad}\rho)=1$.

We claim that $f$ has rank 1 when $p/q=12$ for the granny knot and $p/q=0$ for the square knot and neither of the $\rho_i$ are abelian representations, and in all other cases, $f$ has rank 2.

Let $s:\mathscr{X}(D^2\times S^1)\to \mathscr{X}(T^2)$ be the restriction map. The map on cohomology groups $H^1(D^2\times S^1; \operatorname{ad}\rho)\to H^1(T^2;\operatorname{ad}\rho)$ can be identified with $ds_{[\rho]}$, the derivative of $s$ at $[\rho]$. Similarly, we can identify the map $H^1(S^3\backslash (3_1\#3_1^{\circ});\operatorname{ad}\rho)\to H^1(T^2;\operatorname{ad}\rho)$ with the derivative at $[\rho]$ of the restriction map $r: \mathscr{X}_{\text{irr}}(S^3\backslash (3_1\#3_1^{\circ}))\to \mathscr{X}(T^2)$. Thus, we can write $f$ as $f=(dr\oplus ds)_{[\rho]}$.

By a standard application of Lefschetz duality and the long exact sequence of the pair $(Y,\partial Y)$, where here $Y=D^2\times S^1$ or $S^3\backslash(3_1\#3_1^{\circ})$, we know that $\operatorname{rank}(dr)=\operatorname{rank}(ds)=1$ \cite{sik}. Thus, the rank of $f$ is $2$ unless the images of $r$ and $s$ have the same tangent spaces at $[\rho]$, in which case the rank of $f$ is $1$. We claim that this equality of tangent spaces occurs only when $p/q=12$ for the granny knot and $p/q=0$ for the square knot and neither of the $\rho_i$ are abelian representations.

Let $t:\mathbb{C}^*\times\mathbb{C}^*\to \mathscr{X}(T^2)$ be the map from the eigenvalue variety to the character variety. With the coordinates $(M,L)$ on $\mathbb{C}^*\times\mathbb{C}^*$ for the meridional and longitudinal eigenvalues, $t(M,L)$ is the class of a representation with $\rho(m)=\operatorname{diag}(M,M^{-1})$ and $\rho(\ell)=\operatorname{diag}(L,L^{-1})$. Away from the central representations, $t$ is a degree two covering map. Thus, we can consider the tangent spaces to $t^{-1}(\operatorname{im}(s))$ and $t^{-1}(\operatorname{im}(r))$ in order to prove the claim.

The curve $t^{-1}(\operatorname{im}(s))$ is the surgery curve $\{M^pL^q=1\}$. The closure of the curve $t^{-1}(\operatorname{im}(r))$ is the vanishing locus of the $A$-polynomial of the knot (ignoring the factor coming from reducibles). Recall our calculation of the $A$-polynomials from Section 6,
\begin{align*}	
A_{3_1\# 3_1}^{\text{irr}}(M,L)= & (L+M^{-6})(L-M^{-12}), \\
A_{3_1\# 3_1^*}^{\text{irr}}(M,L)= & (L+M^{-6})(L+M^{6})(L-1).
\end{align*}

The factor of $L+M^{-6}$ (which is the $A$-polynomial of the right-handed trefoil) comes from representations that are irreducible on a $3_1$ summand and abelian on the other summand. Similarly, $L+M^{6}$ is the A-polynomial of the left-handed trefoil. The last factors come from the composites of two non-abelian representations. For such representations of the the granny knot, we have $L_1=L_2=-M^{-6}$ and $L=L_1L_2$, so that $L=M^{-12}$. For the square knot, $L_1=L_2^{-1}$, so that this component is mapped to the line $L=1$.

Now we see that the only situations in which the tangent space to the vanishing locus of the $A$-polynomial coincides with the tangent space to the surgery curve are when $p=12,q=1$ for the granny knot or $p=0,q=1$ for the square knot and $\rho$ is a composite of two non-abelian representations $\rho_i$. This proves the claim.

From \eqref{MV2}, we see that
\begin{align*}
& \dim H^1(S^3_{p/q}(3_1\#3_1^{\circ});\operatorname{ad}\rho) = \dim H^1(S^3\backslash (3_1\#3_1^{\circ});\operatorname{ad}\rho)+1-\operatorname{rank}(f).
\end{align*}
The result follows from combining the above formula, our computations of the rank of $f$, and Lemma \ref{h1exterior}.
\end{proof}
Theorems \ref{HPgranny} and \ref{HPsquare} now follow from applying Corollary \ref{modtwo} to the calculation of the respective character varieties in Propositions \ref{grannyvar} and \ref{squarevar} and the determination of the singular cohomology of these character schemes from Proposition \ref{cohomology}.
\begin{remark}
We use $\HP$ with $\mathbb{Z}/2\mathbb{Z}$ coefficients in Theorems \ref{HPgranny} and \ref{HPsquare} only to avoid determining the relevant local system. Indeed, the character schemes of surgeries on $3_1\# 3_1$ include some components isomorphic to $\mathbb{C}^*$ and $\mathbb{C}^*\backslash\{1\}$, while the other topological types of components that appear are simply connected. We conjecture that the local systems are in fact trivial on all of the components and that Theorems \ref{HPgranny} and \ref{HPsquare} hold over $\mathbb{Z}$.
\end{remark}

\section{Further Discussion}

\subsection{Exact triangles}
In analogy with other Floer theories \cite{OS}\cite{scaduto}\cite{floer}, one may conjecture the existence of a surgery exact triangle for $\HP_{\#}$. That is, one may hope that there exists a long exact sequence
\begin{align*}
\HP_{\#}(S^3)[1]\to \HP_{\#}(S^3_{p+1}(K)) \to \HP_{\#}(S^3_{p}(K))\to\HP_{\#}(S^3).
\end{align*}
However, since $\HP_{\#}(S^3)$ is supported in degree zero, such a long exact sequence would imply that $\HP_{\#}(S^3_{p}(K))$ and $\HP_{\#}(S^3_{p+1}(K))$ are isomorphic except possibly in degree zero. Yet the data from Proposition \ref{HPframed2bridge} shows that this is not the case for surgeries on two-bridge knots. For example, if $p=2k$ and $p+1=2k+1$ both satisfy the hypotheses of Proposition \ref{HPframed2bridge}, then $\HP_{\#}(S^3_{p}(K))$ has rank $k-1$ in degree $-2$, whereas $\HP_{\#}(S^3_{p+1}(K))$ has rank $k$ in degree $-2$.

One can also ask whether a surgery exact triangle exists for $\HP$. The data in Proposition \ref{HP2bridge} can be used to show that such a triangle cannot exist for two-bridge knots. However, one would not even expect such a surgery exact triangle for $\HP$ since exact triangles in Floer theories are not usually formulated for the versions that exclude reducibles. For example, there is no surgery exact triangle for $\mathit{HF}_{\operatorname{red}}^{\circ}$ in Heegaard Floer homology.

\subsection{A conjecture}
In \cite{BC}, the authors define an $\SLC$ Casson knot invariant by
\begin{align*}
\lambda_{\SLC}'(K)=\lim\limits_{q\to\infty} \frac{1}{q}\lambda_{\SLC}(S^3_{p/q}(K)),
\end{align*}
where $p$ is fixed and the limit is taken over all $q$ relatively prime to $p$. In particular, this quantity is independent of $p$. We can make the analogous conjecture for $\HP$ and $\HP_{\#}$.
\begin{conjecture}
Let $K\subset S^3$ be a knot and $S^3_{p/q}(K)$ it $p/q$ surgery. Then the quantities 
\begin{align*}
\lim\limits_{q\to\infty} \frac{1}{q}\operatorname{rk}(\HP^n(S^3_{p/q}(K)))
\end{align*}
and
\begin{align*}
\lim\limits_{q\to\infty} \frac{1}{q}\operatorname{rk}(\HP_{\#}^n(S^3_{p/q}(K)))
\end{align*}
are well-defined invariants of the knot $K$. 
\end{conjecture}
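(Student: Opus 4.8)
The plan is to reduce the $q\to\infty$ asymptotics of $\operatorname{rk}(\HP^n(S^3_{p/q}(K)))$ to intersection-theoretic data on the character variety of the fixed knot exterior $M=S^3\backslash K$, in the spirit of the Boden--Curtis proof that $\lambda'_{\SLC}(K)$ is well-defined. The basic geometric input is that an irreducible character of $S^3_{p/q}(K)$ is precisely an irreducible character of $M$ satisfying the surgery relation $\rho(m^p\ell^q)=I$, so that $\mathscr{X}_{\text{irr}}(S^3_{p/q}(K))=r^{-1}(C_{p/q})$, where $r\colon \mathscr{X}_{\text{irr}}(M)\to \mathscr{X}(\partial M)$ is restriction to the boundary and $C_{p/q}$ is the surgery curve, lifting in eigenvalue coordinates to $\{M^pL^q=1\}$. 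Here the image $r(\mathscr{X}_{\text{irr}}(M))$ --- cut out by the factors of $A_K^{\text{irr}}$ --- is fixed, and only $C_{p/q}$ varies with $q$.

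First I would fix the finitely many irreducible components $\{V_j\}$ of $\mathscr{X}_{\text{irr}}(M)$ and analyze $r^{-1}(C_{p/q})\cap V_j$ one component at a time, mirroring the component-by-component analysis carried out for composite knots in Proposition \ref{compositechar}. For a component with one-dimensional image, a Newton-polygon count shows that $C_{p/q}$ meets $r(V_j)$ in $q\cdot c_j+O(1)$ points as $q\to\infty$, where $c_j$ depends only on the Newton polygon of the corresponding factor of $A_K^{\text{irr}}$ --- equivalently, on the Culler--Shalen seminorm of the longitude --- and is therefore a knot invariant, while the $O(1)$ error absorbs all dependence on $p$ and on parity. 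Over a generic such intersection point the fiber of $r|_{V_j}$ is independent of $q$ (a point, a copy of $\mathbb{C}^*$, or $\mathbb{C}^*\backslash\{1\}$ in the cases arising for composite knots), so up to finitely many exceptional fibers, $\mathscr{X}_{\text{irr}}(S^3_{p/q}(K))$ is a disjoint union of $\Theta(q)$ copies of these fixed model fibers.

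Granting this, the remaining step is bookkeeping. Working with $\mathbb{Z}/2\mathbb{Z}$ coefficients and invoking Corollary \ref{modtwo} --- which sidesteps the local systems on the non-simply-connected $\mathbb{C}^*$ components --- one writes $\HP^*(S^3_{p/q}(K);\mathbb{Z}/2\mathbb{Z})=\bigoplus_i H^*(X_i;\mathbb{Z}/2\mathbb{Z})[d_i]$ and sums the degree-$n$ Betti numbers over all components. Each model fiber of dimension $d$ contributes $\dim H^{n+d}$ of that fiber to $\HP^n$, so the coefficient of $q$ in $\dim\HP^n(S^3_{p/q}(K);\mathbb{Z}/2\mathbb{Z})$ is a finite sum $\sum_j c_j\cdot b_{n+d_j}(\text{fiber}_j)$ of knot invariants, which is the candidate limit; its independence of $p$ is inherited from that of each $c_j$, exactly as the Boden--Curtis limit counts intersections with the asymptotic longitudinal direction, into which $p$ enters only at lower order. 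This would establish the conjecture with $\mathbb{Z}/2\mathbb{Z}$ coefficients, and the formulae of Propositions \ref{HP2bridge}, \ref{grannyvar}, and \ref{squarevar} confirm the pattern: for two-bridge knots the leading term is $\tfrac{1}{2}\|\mathscr{L}\|_T$, and for the granny and square knots the degree-$0$ and degree-$(-1)$ ranks grow with the slopes read off from Theorems \ref{HPgranny} and \ref{HPsquare}.

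The hard part will be controlling positive-dimensional and singular components in full generality. When $\mathscr{X}_{\text{irr}}(S^3_{p/q}(K))$ fails to be smooth, Corollary \ref{modtwo} no longer applies and one must compute the hypercohomology of the full perverse sheaf of vanishing cycles, tracking stalk contributions along singular strata and proving that these too are eventually linear in $q$ rather than fluctuating at the slopes where $C_{p/q}$ becomes tangent to some $r(V_j)$ or passes through a singular point. Establishing, uniformly in $q$, that only finitely many slopes are exceptional and that no non-abelian reducibles intrude --- the role played for small knots by the admissibility hypotheses of Theorem \ref{thm:curtis} and by Lemma \ref{nars} --- requires an analogue of these finiteness statements valid for all $q$. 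Finally, upgrading from $\mathbb{Z}/2\mathbb{Z}$ to the integral ranks in the statement demands the triviality of the local systems on the $\mathbb{C}^*$ components conjectured in the preceding remark, since a nontrivial rank-one local system on $\mathbb{C}^*$ has vanishing cohomology and would alter the leading coefficient.
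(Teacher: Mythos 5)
The statement you are addressing is not proved in the paper at all: it is stated as a \emph{conjecture}, and the paper's only evidence is verification in examples (two-bridge knot surgeries via Propositions \ref{HP2bridge} and \ref{HPframed2bridge}, and the granny and square knots via Theorems \ref{HPgranny} and \ref{HPsquare}, from which the numerical limits are read off). So there is no proof of the paper's to compare against; the only question is whether your argument closes the conjecture, and it does not. What you have written is the natural program --- fix the image $r(\mathscr{X}_{\text{irr}}(M))$ cut out by $A_K^{\text{irr}}$, let the surgery curve $C_{p/q}$ move, and count intersections asymptotically --- and in the small-knot case this is essentially Theorem \ref{thm:curtis} combined with Theorem \ref{smallknot}. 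But beyond that case every load-bearing step is asserted rather than proved.

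Concretely: (i) your claim that generic fibers of $r\vert_{V_j}$ are ``a point, a copy of $\mathbb{C}^*$, or $\mathbb{C}^*\backslash\{1\}$'' is imported from the composite-knot computation (Proposition \ref{compositechar}) and has no justification for a general non-small knot, where components of $\mathscr{X}_{\text{irr}}(M)$ and the fibers of $r$ can be positive-dimensional varieties of essentially arbitrary topology; nothing in your argument shows their homeomorphism type stabilizes as $q\to\infty$. (ii) Corollary \ref{modtwo} requires $\mathscr{X}_{\text{irr}}(S^3_{p/q}(K))$ to be smooth, which you cannot assume for infinitely many $q$; when smoothness fails, $\HP$ is computed from vanishing-cycle stalks with Milnor-number multiplicities, and you give no argument that these multiplicities are eventually $1$ (or even eventually constant) along the $\Theta(q)$ intersection points --- this is exactly the issue Curtis's admissibility hypotheses are designed to control, and you only gesture at an ``analogue valid for all $q$'' without producing one. (iii) The conjecture concerns integral ranks, while your bookkeeping is mod $2$; the bridge between them is precisely the triviality of local systems on the $\mathbb{C}^*$ components, which the paper itself leaves as a conjecture. (iv) The $\HP_{\#}$ half is essentially untouched: as the remark following Proposition \ref{HPframed2bridge} explains, $\mathscr{R}_{\text{irr}}$ is a $\op{PSL}(2,\mathbb{C})$-fibration over $\mathscr{X}_{\text{irr}}$ whose global structure is not determined by $\mathscr{X}_{\text{irr}}$ when the latter has positive-dimensional components, so even granting your entire analysis of the character scheme you cannot extract the cohomology of the representation scheme. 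Your closing paragraph candidly concedes most of this; what you have is a plausible research program consistent with the paper's examples, not a proof.
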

For example, by Theorems \ref{HPgranny} and \ref{HPsquare} we can verify this conjecture for $\HP$ of surgeries on the granny and square knots. We obtain the numerical data
\begin{align*}
\lim\limits_{q\to\infty} \frac{1}{q}\operatorname{rk}(\HP^0(S^3_{p/q}(3_1\# 3_1)))=12,\\
\lim\limits_{q\to\infty} \frac{1}{q}\operatorname{rk}(\HP^{-1}(S^3_{p/q}(3_1\# 3_1)))=6,
\end{align*}
and
\begin{align*}
\lim\limits_{q\to\infty} \frac{1}{q}\operatorname{rk}(\HP^0(S^3_{p/q}(3_1\# 3_1^*)))=6,\\
\lim\limits_{q\to\infty} \frac{1}{q}\operatorname{rk}(\HP^{-1}(S^3_{p/q}(3_1\# 3_1^*)))=0.
\end{align*}

\begin{bibdiv}
\begin{biblist}*{labels={alphabetic}}
\bibselect{biblio}
\end{biblist}
\end{bibdiv}

\end{document}